\title{Some model theory and topological dynamics of $p$-adic algebraic groups}
\date{\today}
\author{Davide Penazzi \thanks{Partially supported by IMA Small Grant SGS21/16 and NSF grant DMS-141947}\\University of Central Lancashire\and  Anand Pillay \thanks{Partially supported by NSF grants DMS-1360702, DMS-1665035, and DMS-1760413}\\University of Notre Dame\and  Ningyuan Yao\thanks{Partially supported by NSFC grant 11601090, Initial Scientific Research Fund of Young Teachers in Fudan University, and William J. Hank Family Chair research funds (Notre Dame) }\\Fudan University}
\newtheorem{Theorem}{Theorem}[section]
\newtheorem{Prop}[Theorem]{Proposition}
\newtheorem{Def}[Theorem]{Definition}
\newtheorem{Rmk}[Theorem]{Remark}
\newtheorem{Lemma}[Theorem]{Lemma}
\newtheorem{Cor}[Theorem]{Corollary}
\newtheorem{Fact}[Theorem]{Fact}
\newtheorem{Question}[Theorem]{Question}
\newcommand{\R}{\mathbb R}
\newcommand{\Q}{{\mathbb Q}_p}
\newcommand{\Z}{{\mathbb Z}_p}
\newcommand{\z}{\mathbb Z}
\newcommand{\N}{\mathbb N}
\newcommand{\pp}{\mathbb P}
\newcommand{\K}{\mathbb K}
\newcommand{\V}{\mathbb O}
\newcommand{\Ga}{{\mathbb G}_{a}}
\newcommand{\Gm}{{\mathbb G}_{m}}
\DeclareMathOperator{\tp}{tp}
\begin{document}

\maketitle
\begin{abstract}
We initiate the study of $p$-adic algebraic groups $G$  from the stability-theoretic and 
definable topological-dynamical points of view, that is,  we consider invariants of the action of $G$ on its space of types over $\Q$ in the language of fields.   We consider the additive and multiplicative groups of $\Q$ and $\Z$, the group of upper triangular invertible $2\times 2$ matrices, $SL(2,\Z)$, and, our main focus, $SL(2,\Q)$.  In all cases we identify $f$-generic types (when they exist),  minimal subflows, and idempotents.  Among the main results is that the ``Ellis group" of $SL(2,\Q)$ is ${\hat \z}$, yielding a counterexample to Newelski's conjecture with new features:  $G = G^{00} =  G^{000}$ but the Ellis group is infinite.  A final section deals with the action of $SL(2,\Q)$ on the type-space of the projective line over $\Q$.

\end{abstract}

\bibliographystyle{plain}
\section{Introduction and preliminaries}
The machinery of topological dynamics has proved to be useful in generalizing stable group theory to unstable environments (the original paper on the topic being \cite{Newelski}). Given a structure $M$ and group $G$ definable in $M$, a natural action, given by model theory,  is that of $G$ on the space $S_{G}(M)$ of complete types over $M$ concentrating on $G$.
On the other hand this action is simply a dynamical system for  $G$  considered as a discrete group.  When $Th(M)$ is stable,  $G$ is what is called a stable group, and the fundamental theorems of stable group theory are coded in this dynamical system.   There has been a considerable amount of work extending stable group theory to the case where $Th(M)$ is $NIP$ (does not have the independence  property), and $G$ is definably amenable  (see \cite{NIPII} and \cite{C-S} for example).  When  $M$ is the field of reals, then a (semialgebraic) noncompact simple Lie group such as $SL(2,\R)$ is {\em not} definably amenable, but its definable topological dynamics was nevertheless analyzed in \cite{GPPI}. The latter work was partly motivated by a conjecture of Newelski on the connection between the Ellis group of such an action and the definable Bohr compactification $G^{*}/(G^{*})^{00}$ of $G$ ($G^{*}$ being the interpretation of $G$ in a saturated elementary extension). The case of $SL_(2,\R)$ gives an example where these two invariants are different, the definable Bohr compactification being trivial and the Ellis group being $\z/2\z$.  In the current paper we extend this analysis of \cite{GPPI} to  the $p$-adic context,  namely where $M$ is the field of $p$-adic numbers, rather than the field of reals. We focus on $SL(2,\Q)$ and its building blocks  but the analysis should extend to semisimple $p$-adic Lie groups (as groups definable in the $p$-adic field).   

In the real case we made use of the Iwasawa decomposition of $SL(2,\R)$ as 
\newline
$B(\R)^{0}\cdot SO(2,\R)$, where $B$ is the Borel subgroup of upper triangular matrices,  $SO(2,\R)$ is a maximal compact subgroup, and $B(\R)^{0}$ is the semialgebraic, equivalently topological, connected component of $B(\R)$ (note that the intersection of $B(\R)^{0}$ and $SO(2,\R)$ is trivial).  In the $p$-adic case, the Iwasawa decomposition of $SL(2,\Q)$ has the form 
\newline
$B(\Q)\cdot SL(2,\Z)$ where $B$ is as before, and now $SL(2,\Z)$ is a maximal compact subgroup. However now the intersection of the constituents is large (in fact of $p$-adic dimension $2$) and also the constituents are far from connected. For example $SL(2,\Z)$, being profinite, has trivial connected component. So  the analysis in the $p$-adic case is rather harder and requires some new ideas.  A crucial role in our analysis  of $SL(2,\R)$ was its action on the homogeneous space $SL(2,\R)/B(\R)^{0}$, which is a $2$-cover of the natural action of $SL(2,\R)$ on $\pp^{1}(\R)$. In fact the universal minimal definable flow of $SL(2,\R)$ was the space of nonalgebraic types of the homogeneous space $SL(2,\R)/B(\R)^{0}$. We proceed quite differently in the $p$-adic case. On the other hand,  there are analogies between the final statements regarding the Ellis group; in the real case the Ellis group of $SL(2,\R)$ (acting on its type space) is $\z/2\z$ which coincides with $\K^{*}/(\K^{*})^{0}$ where $\K^{*}$ is the muultiplicative group of a saturated real closed field $\K$. In the $p$-adic case, the Ellis of group of $SL(2,\Q)$ (acting on its type space) is $\hat\z$ which coincides with  $\K^{*}/(\K^{*})^{0}$ where $\K^{*}$ is the multiplicative group a saturated $p$-adically closed field $\K$.  In any case, $SL(2,\Q)$ provides another counterexample to Newelski's conjecture on the relationship between the Ellis group and $G/G^{00}$, but with different features from the ones provided by Corollary 0.3 of \cite{Krupinski-PillayII} for example, as the Ellis group is infinite whereas $G = G^{00} = G^{000}$.

To be more precise, our main results are as follows where $M$ denotes the structure $(\Q,+,\times)$, $G$ denotes $SL(2,-)$, $S_{G}(M)$ denotes the space of complete types over $M$ extending the formula `$x\in G$',  $*$ denotes the canonical semigroup structure on $S_{G}(M)$, and other notation will be explained later.

\begin{itemize}
\item   A minimal subflow of $(G(M), S_{G}(M))$ is $cl({\cal I}*{\cal J})$ where ${\cal I}$ is the unique minimal subflow of the action of $SL(2,\Z)$ on its type space, and ${\cal J}$ is a certain minimal subflow of the action of $B(\Q)$ on its type space. In particular $cl({\cal I}*{\cal J})$ is the universal minimal definable flow of $SL(2,\Q)$. See Theorem  3.4. 

\item The Ellis group attached to the flow $(G(M), S_{G}(M))$ is $\hat\z$.   See Corollary 3.8. 

\end{itemize}

We also prove that the space of nonalgebraic types over $M$ of the projective line $\\P^{1}(\Q)$ is minimal and proximal under the natural action of $SL(2,\Q)$. See Corollary 4.8. 

As part of our analysis we classify  $1$-types over $\Q$ from the stable group theory  point of view, namely we describe $f$-generics of various kinds (definable, finitely satisfiable)  and minimal flows, with respect to the additive and multiplicative groups. This does not seem to have observed  before, and provides  interesting phenomena for definable topological dynamics in the $NIP$ setting. 

Let us discuss where our  work fits into current themes in topological dynamics and definable groups. 
This paper does not explicitly  offer any new general results in  topological dynamics and model theory.  However, the project of generalizing  the study of groups definable in $o$-minimal structures to the $p$-adic environment has been on the cards for a long time.  Benjamin Druart's thesis \cite{Druart-thesis} and the preprint  \cite{Druart-paper}, studied groups definable in $p$-adically closed fields, in particular $SL(2,\Q)$, in analogy with $o$-minimal and finite Morley rank groups  methods. On the other hand there has been considerable interest in generalising the stability-theoretic and topological dynamical study of real Lie groups such as $SL(2,\R)$ to the $p$-adic context since the paper \cite{GPPI} was written in 2012, and this is what we accomplish in the current paper.  Moreover, as G. Jagiella has pointed out to us, the methods in our paper suggest generalizations to definable groups $G$  in $NIP$ theories with a decomposition $G = B\cdot K$ where $B$ has ``definable $f$-generics" and $K$ has ``finitely satisfiable generics".  This will be pursued in  future work.

Our notation for model theory is standard, and we will assume familiarity with basic notions such as type spaces, heirs, coheirs, definable types etc.  References are \cite{Poizat-book} as well as \cite{Pillay-intro-stability}.

Our notation for the $p$-adics is as follows:  $\Q$ is the field of $p$-adics and  $\Z$ is the ring of $p$-adic integers. $\z$ is the ordered additive group of integers, the value group of $\Q$. 
$M$ denotes the standard model $(\Q,+,\times, -, 0, 1)$, and we  sometimes write $\Q$ for $M$. $\bar M$ denotes a saturated elementary extension  $(\K, +, \times,0,1)$ of $M$ and again sometimes we write $\K$ for $\bar M$.  $\Gamma$ denotes the value group of $\K$.  We will be  referring a lot  to the comprehensive survey \cite{Belair},  for the basic model theory of the $p$-adics.  A key point is Macintyre's theorem \cite{Macintyre} that $Th(\Q,+,\times,0,1)$ has quantifier elimination in the language where we add predicates $P_{n}(x)$ for the $nth$ powers (all $n$). Moreover the valuation is quantifier-free definable in Macintyre's language, in particular is definable in the language of rings.  (See Section 3.2 of \cite{Belair}.) We will give a little more background at the beginning of Section 2.1.

In the rest of this introduction we give more background on topological dynamics and the model-theoretic approach. 

In Section 2 we analyse the model-theoretic dynamics of the  building blocks of $SL(2,\Q)$, namely the additive and multiplicative groups, the Borel subgroup, and the maximal compact subgroup $SL(2,\Z)$.  As mentioned earlier, this is of independent interest. 

In Section 3, we prove the main results, on the minimal subflows and Ellis group of the action on $SL(2,\Q)$ on its type space, making use of the Iwasawa decomposition and results in Section 2. 

In  Section 4 we study the action  on the type space of the projective line. We also ask several questions. 

\vspace{2mm}
\noindent
We would like to thank a referee for his/her comments on a first version of this paper.  He/she pointed out many  mathematical points which needed clarification and/or correction, often suggesting the required correction. Following these comments, we have also added explanations of how the current  paper differs from the earlier work (\cite{GPPI}) on $SL(2,\R)$, and how it relates to other current research.

\subsection{Topological dynamics}

Our references for (abstract) topological dynamics are \cite{Auslander} and \cite{Glasner}

Given a (Hausdorff) topological group $G$, by a $G$-flow mean a continuous action $G\times X\to X$ of $G$ on a compact (Hausdorff) topological space $X$. We sometimes write the flow as $(X,G)$. Often it is assumed that there is a dense orbit, and sometimes a $G$-flow $(X,G)$ with a distinguished point $x\in X$ whose orbit is dense is called a $G$-ambit.

In spite of $p$-adic algebraic groups being nondiscrete topological groups, we will be treating them as discrete groups so as to have their actions on type spaces being contiinuous.
(But note that there is a model-theoretic account of the dynamics of definable groups with a definable topology. See \cite{Krupinski-Pillay} for example. And it might be worthwhile to prove and compare results.

So in this background section we assume $G$ to be a discrete group, in which case a $G$-flow is simply an action of $G$ by homeomorphisms on a compact space $X$. 

By a subflow of $(X,G)$ we mean a closed $G$-invariant subspace $Y$ of $X$ (together with the action of $G$ on $Y$). $(X,G)$ will always have  minimal nonempty subflows. 
Points $x,y\in X$ are {\em proximal} with respect to $(X,G)$ if there is a net $(g_{\alpha})_{\alpha}$ in $G$ and $z$ in $X$ such that  both $g_{\alpha}x$ and $g_{\alpha}y$ converge to $z$.
$(X,G)$ is proximal if every pair of elements of $X$ is proximal. 

Given a flow $(X,G)$,  its enveloping semigroup $E(X)$  is the closure in the space $X^{X}$ (with the product topology) of the set of maps $\pi_{g}:X\to X$, where $\pi_{g}(x) = gx$, equipped with composition $\circ$ (which is continuous on the left).   So any $e\in E(X)$ is a map from $X$ to $X$ and, for  example, proximality of the flow $(X,G)$ is equivalent to: for all $x,y\in X$ there is  $e\in E(X)$ such that $e(x) = e(y)$. 

Note also that $E(X)$ is a compact space and we have an action $g\cdot e = \pi_{g}\circ e$ of $G$ on $E(X)$, by homeomorphisms.  So $(E(X), G)$ is a flow too.

Ellis \cite{Ellis} proved the following correspondence between minimal subflows and ideals of $E(X)$.

\begin{Theorem}\label{ellisthm}
Denote by $J$ the set of idempotents of the enveloping semigroup $E(X)$. Then
\begin{enumerate}
 \item Minimal (automatically closed) left ideals $I$ of $E(X)$ coincide with minimal subflows.
 \item Given a minimal closed left ideal $I$, $I\cap J\neq \emptyset$; moreover for $u\in I\cap J$, $(u\circ I,\circ)$ is a group, called the Ellis group.
 \item All Ellis groups (varying $I$ and $u$) are isomorphic, so we sometimes refer to this isomorphism class as the Ellis group  attached to the original flow $(X,G)$.
\end{enumerate}
\end{Theorem}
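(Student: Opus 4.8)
\emph{Proof proposal.} The plan is to reduce everything to the abstract structure theory of $E(X)$ as a compact (Hausdorff) semigroup in which, for each fixed $\eta\in E(X)$, the right translation $\rho_{\eta}\colon \zeta\mapsto \zeta\circ\eta$ is continuous; this is precisely what ``$\circ$ is continuous on the left'' amounts to, since $(\zeta\circ\eta)(x)=\zeta(\eta(x))$ depends continuously on $\zeta$ in the product topology on $X^{X}$. The one genuinely nontrivial topological input is the idempotent lemma (Ellis): \emph{every nonempty closed subsemigroup $T$ of $E(X)$ contains an idempotent.} I would prove this by first using Zorn's lemma to extract a minimal nonempty closed subsemigroup $T_{0}\subseteq T$ --- the hypotheses of Zorn's lemma hold because, by compactness, a nested intersection of nonempty closed subsemigroups is again one --- and then fixing any $p\in T_{0}$. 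Since $T_{0}\circ p=\rho_{p}(T_{0})$ is a closed subsemigroup contained in $T_{0}$, minimality forces $T_{0}\circ p=T_{0}$, so $p\in T_{0}\circ p$; hence $\{q\in T_{0}\colon q\circ p=p\}$ is a nonempty closed subsemigroup of $T_{0}$, and minimality forces it to equal $T_{0}$. In particular $p\circ p=p$.

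For part (1), the key observation is that a subset $L\subseteq E(X)$ is a closed left ideal precisely when it is a subflow of $(E(X),G)$: a left ideal is invariant under each $\pi_{g}$, hence $G$-invariant; conversely, if $Y$ is closed and $G$-invariant, $y\in Y$, and $e=\lim_{\alpha}\pi_{g_{\alpha}}\in E(X)$, then $e\circ y=\rho_{y}(e)=\lim_{\alpha}\pi_{g_{\alpha}}\circ y\in Y$ by continuity of $\rho_{y}$ and closedness of $Y$, so $E(X)\circ Y\subseteq Y$. Next, for any left ideal $L$ and any $p\in L$ the set $E(X)\circ p=\rho_{p}(E(X))$ is a closed left ideal contained in $L$ (containing $p$, since $\mathrm{id}_{X}=\pi_{1_{G}}\in E(X)$); so every left ideal contains a closed one, whence minimal left ideals are automatically closed, exist by the usual Zorn argument applied to closed left ideals, and coincide with the minimal subflows.

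For part (2), fix a minimal closed left ideal $I$ and $p\in I$. Then $I\circ p$ is a closed left ideal contained in $I$, so $I\circ p=I$ and $p\in I\circ p$; therefore $\{q\in I\colon q\circ p=p\}$ is a nonempty closed subsemigroup of $E(X)$, and by the idempotent lemma it contains an idempotent $u$, giving $I\cap J\neq\emptyset$. Now fix $u=u\circ u\in I$. One checks directly that $u\circ I$ is closed under $\circ$ (using that $I$ is a left ideal) and that $u$ is a left identity on it, since $u\circ(u\circ q)=u\circ q$. Moreover, for any $v\in u\circ I\subseteq I$ minimality gives $I\circ v=I$, so $u=w\circ v$ for some $w\in I$, and then $u\circ w\in u\circ I$ is a left inverse of $v$ relative to $u$, as $(u\circ w)\circ v=u\circ(w\circ v)=u\circ u=u$. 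Thus $(u\circ I,\circ)$ is a semigroup with a left identity in which every element has a left inverse; the standard elementary argument then shows $u$ is in fact a two-sided identity and each left inverse a two-sided inverse, so $(u\circ I,\circ)$ is a group.

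Finally, for part (3): given minimal closed left ideals $I,I'$ and an idempotent $u\in I$, note $I'\circ u$ is a closed left ideal contained in $I$, so $I'\circ u=I$ and there is $v\in I'$ with $v\circ u=u$; set $u'=u\circ v$. A short computation shows $u'$ is an idempotent of $I'$ with $u\circ u'=u'$ and $u'\circ u=u$, and hence that $u'\circ p=p$ for $p\in u\circ I$ while $u\circ r=r$ for $r\in u'\circ I'$; it then follows that $p\mapsto p\circ u'$ and $r\mapsto r\circ u$ are mutually inverse group isomorphisms between $u\circ I$ and $u'\circ I'$. This handles any two minimal left ideals; the remaining case of two distinct idempotents inside a single minimal left ideal is dealt with by the same normalization trick together with the symmetric statement for a minimal right ideal (equivalently, by observing that the union of all minimal left ideals is the kernel of $E(X)$, a completely simple semigroup, all of whose maximal subgroups are isomorphic). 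I expect the idempotent lemma to be the main conceptual obstacle --- it is the only step where compactness and one-sided continuity genuinely interact, and the ``right'' minimal-closed-subsemigroup argument is not obvious a priori; the bookkeeping in part (3) is routine but fiddly, and complete details for all three parts are classical (see \cite{Ellis}, or \cite{Auslander}, \cite{Glasner}).
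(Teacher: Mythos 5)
The paper gives no proof of this statement: Theorem 1.1 is quoted as background and attributed to Ellis with a citation to \cite{Ellis}, so there is nothing in the text to compare your argument against line by line. What you have written is the standard Ellis--Namakura development, and it is correct: the idempotent lemma via a minimal closed subsemigroup, the identification of closed left ideals with subflows using one-sided continuity of $\rho_{y}$, the existence of idempotents in a minimal left ideal from the closed subsemigroup $\{q\in I: q\circ p=p\}$, and the ``left identity plus left inverses'' criterion for $u\circ I$ to be a group are all exactly the classical steps. Your checks in parts (1) and (2) are complete, and in part (3) the normalization $u'=u\circ v$ with $v\circ u=u$ does give mutually inverse isomorphisms $p\mapsto p\circ u'$ and $r\mapsto r\circ u$ between $u\circ I$ and $u'\circ I'$ for two distinct minimal left ideals.

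The only thin spot is the case you defer at the end of part (3), namely two idempotents $u_{1},u_{2}$ in the \emph{same} minimal left ideal $I$. Rather than invoking minimal right ideals or the completely simple kernel, there is a two-line argument entirely within your framework: since $u_{2}\in I$, the set $I\circ u_{2}$ is a closed left ideal contained in $I$, hence equals $I$, so every $q\in I$ has the form $w\circ u_{2}$ and therefore satisfies $q\circ u_{2}=q$; that is, every idempotent of $I$ is a right identity for all of $I$. Granting this, the map $p\mapsto u_{2}\circ p$ from $u_{1}\circ I$ to $u_{2}\circ I$ is a homomorphism (because $p_{1}\circ u_{2}=p_{1}$) with inverse $r\mapsto u_{1}\circ r$, using $u_{1}\circ u_{2}=u_{1}$ and $u_{2}\circ u_{1}=u_{2}$. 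With that supplement your proof is complete and self-contained.
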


There is a {\em universal} $G$-ambit, which is (under our discreteness assumption on $G$) the Stone-Cech compactification $\beta G$ of $G$. This is precisely the (Stone) space of ultrafilters on the Boolean algebra of {\em all} subsets of $G$. The action of $G$ on itself by left translation gives rise to an action on $\beta G$ by homeomorphisms. Identifying $g\in G$ with the principal ultrafilter it generates yields an embedding of $G$ in $\beta G$ and $\beta G$ together with the identity element $id_{G}$  of $G$ as distinguished point, is the universal $G$-ambit. The universal property is that for any other $G$-ambit $(X,x)$ there is a unique map of $G$
flows from $\beta G$ to $X$ which takes $id_{G}$ to $X$. 

The enveloping semigroup $E(\beta G)$ of $\beta G$ coincides with $\beta G$ (due to its universal character) and  hence $\beta G$ is equipped with a canonical semigroup structure, continuous on the left. It can be described explicitly in various ways, see Section 4 of \cite{Newelski} for one such description.   Minimal $G$-subflows, equivalently minimal left ideals, of $\beta G$ are isomorphic as $G$-flows and coincide with the {\em universal minimal  $G$-flow} $({\cal M},G)$, a $G$-flow, unique up to isomomorphism, with the feature that any minimal $G$-flow is an (surjective)  image of $({\cal M},G)$ under a map of $G$-flows.   The Ellis group of $(\beta G, G)$ is an important invariant of the group $G$. 

\subsection{Model theory} 
The model-theoretic background for the current paper is contained in \cite{GPPII} and \cite{P-Y}, but we give a quick summary here.

Given an $L$-structure $M$, $S(M)$ denotes the collection of all complete types over $M$ (in all sorts or number of variables). For a definable set $Z$ in $M$, $S_{Z}(M)$ denotes the (Stone) space of complete types over $M$ containing the formula $x\in Z$.  If $M'$ is an elementary extension of $M$, $Z(M')$  denotes the interpretation in $M'$ of the formula defining $Z$ in $M$. $\bar M$ denotes a saturated elementary extension of $M$, and we also may consider elementary extensions of $\bar M$ in which all types over $\bar M$ are realized.

\begin{Fact} Suppose that all complete types (in any sort) over $M$ are definable. Then every complete type $p$ over $M$ has a unique coheir in $S(\bar M)$ as well as a unique heir in $S(\bar M)$.
\end{Fact}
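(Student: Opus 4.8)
The plan is to prove existence and uniqueness of heirs and of coheirs separately; the definability hypothesis enters each uniqueness statement, but in different ways. Recall that an heir of $p\in S(M)$ is a $q\in S(\bar M)$ extending $p$ such that whenever $\varphi(x,b)\in q$ with $b\in\bar M$, there is $b'\in M$ with $\varphi(x,b')\in p$; and a coheir of $p$ is a $q\in S(\bar M)$ extending $p$ and finitely satisfiable in $M$. Existence of heirs will come for free from the canonical definable extension, and existence of coheirs from a standard compactness argument, so the content is in the two uniqueness claims.

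For heirs I would argue as follows. Fix, for each formula $\varphi(x,y)$, an $M$-formula $d_p\varphi(y)$ defining $\{b\in M:\varphi(x,b)\in p\}$ (possible since $p$ is definable). The scheme $\tilde p:=\{\varphi(x,b):\varphi(x,y)\text{ a formula},\ b\in\bar M,\ \bar M\models d_p\varphi(b)\}$ is a complete type over $\bar M$ extending $p$: consistency and completeness are routine compactness arguments, since the corresponding statements hold in $M$ for $p$ and pass up to $\bar M$. It is an heir, since $\bar M\models\exists y\,d_p\varphi(y)$ forces $M\models\exists y\,d_p\varphi(y)$. For uniqueness, suppose $q$ is any heir of $p$ and, toward a contradiction, $\varphi(x,b)\in q$ while $\bar M\models\neg d_p\varphi(b)$. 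Then $\varphi(x,b)\wedge\neg d_p\varphi(b)\in q$, so applying the heir property to the formula $\varphi(x,y)\wedge\neg d_p\varphi(y)$ produces $b'\in M$ with $\varphi(x,b')\wedge\neg d_p\varphi(b')\in p$, contradicting the choice of $d_p\varphi$. Hence $q=\tilde p$.

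For coheirs, existence is the usual compactness argument (extend $p$ to a complete type finitely satisfiable in $M$, using $M\neq\emptyset$). For uniqueness I would pass to a further elementary extension $N\succ\bar M$ realizing all types over $\bar M$, take a coheir $q=\tp(a/\bar M)$ of $p$ with $a\in N$, and fix a formula $\varphi(x,y)$ and $b\in\bar M$. Since $q$ is finitely satisfiable in $M$, so is its restriction $\tp(a/Mb)$, and by the heir--coheir symmetry lemma this is equivalent to $\tp(b/Ma)$ being an heir of $\tp(b/M)$. Now $\tp(b/M)$ is definable \emph{by hypothesis} --- and this, rather than definability of $p$ alone, is the crucial use of the assumption --- so by the uniqueness of heirs just established, $\tp(b/Ma)$ is the canonical definable extension of $\tp(b/M)$ to $Ma$. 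Regarding $\varphi$ with $y$ as its object variable and $x$ as its parameter slot, this says that $\varphi(a,y)\in\tp(b/Ma)$, i.e. $\varphi(x,b)\in q$, holds if and only if $N\models\theta(a)$, where $\theta(x)$ is the $M$-formula obtained by applying the defining scheme of $\tp(b/M)$ to $\varphi$; crucially $\theta$ depends only on $\varphi$ and $b$, not on $q$ or $a$. Therefore $\varphi(x,b)\in q\iff\theta(x)\in q\iff\theta(x)\in p$ (the last step because $\theta$ is over $M$ and $q$ extends $p$), and the right-hand side is independent of $q$. As $\varphi$ and $b$ were arbitrary, any two coheirs of $p$ coincide.

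The step that genuinely requires the full hypothesis --- and the only place I expect a reader might want more detail --- is coheir uniqueness: one must use that $\tp(b/M)$ is definable for \emph{every} finite tuple $b$ from $\bar M$, invoke heir--coheir symmetry correctly, and keep careful track of which of $x,y$ is the object variable and which the parameter variable, so that the resulting formula $\theta$ ends up over $M$ and free of $a$. Without the hypothesis, coheirs need not be unique at all --- already the theory of dense linear orders, where some $1$-types over $\mathbb{Q}$ are non-definable, furnishes examples --- which shows the assumption is doing real work here. Everything else reduces to routine compactness and the elementary calculus of definable types.
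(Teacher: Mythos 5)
The paper states this as a background \emph{Fact} and gives no proof at all --- it is quoted from the standard literature (Poizat, Pillay) and then applied to $\Q$ via Delon's theorem --- so there is no in-paper argument to compare yours against. Your proof is correct and is the standard one: heir uniqueness from the defining scheme $d_p\varphi$, and coheir uniqueness by passing through heir--coheir symmetry and using that $\tp(b/M)$ is definable for \emph{every} tuple $b$ from $\bar M$, which is exactly where the full strength of the hypothesis is needed.
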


This applies to the situation where $M = (\Q,+,\times, -, 0, 1)$ due to a theorem of Delon \cite{Del89}. 

Now suppose that $G$ is a group definable in $M$. 

The ``definable"  analogue of $\beta G$ is the space $S_{G}(M)$ of all complete types over $M$ concentrating on $G$. $G$ clearly acts on $S_{G}(M)$ (on the left)  by homeomorphisms.  If all types over $M$ are definable, then $E(S_{G}(M))$ coincides with $S_{G}(M)$ and we already have a semigroup structure, which we denote by $*$,  on $S_{G}(M)$. It can be explicitly described as  $p*q = tp(gh/M)$ where $g$ realizes $p$ and $h$ realizes the unique heir of $q$ over $(M,g)$.  

It is worth mentioning the notion of a definable action of $G$ on a compact space $X$. It means an action of $G$ on $X$ (by homeomorphisms) with the property that for each $y\in X$ the map from $G$ to $X$ taking $y\in X$ to $gy$ is definable. Where a map $\phi$  from $G$ to the compact space $X$ is said to be definable if for any two closed disjoint subsets $C_{1}$, $C_{2}$ of $X$, the preimages $\phi^{-1}(C_{1})$, $\phi^{-1}(C_{2})$ are separated by a definable (in $M$) subset of $G$. 

When all types over $M$ are definable, then $(S_{G}(M), G, id_{G})$ is the universal definable $G$-ambit (in analogy with $\beta G$ being the universal $G$-ambit).  Moreover some/any minimal subflow ${\cal M}$ of $S_{G}(M)$ will be the universal minimal definable $G$-flow.   And  the Ellis group $p{\cal M}$,  for $p$ any idempotent in ${\cal M}$,  will be a basic invariant of the definable group $G$. 

Another basic invariant of $G$ is the compact group $G({\bar M})/G({\bar M})^{00}_{M}$, where $G({\bar M})_{M}^{00}$  is the smallest bounded index type-definable over $M$ subgroup of $G({\bar M})$.  This can also be described as the definable Bohr compactification of $G$. 

As already pointed out in \cite{Newelski} there is a natural surjective homomorphism from the Ellis group to the definable Bohr compactification. Newelski suggested that in tame contexts such as when $Th(M)$ has $NIP$, this is actually an isomorphism.  This was proved in \cite{C-S} when $G$ is definably amenable (and proved earlier in \cite{C-P-S} when $M$ is $o$-minimal and $G$ definably amenable). In \cite{GPPI} we showed that $SL(2,\R)$ as a group definable in $(\R,+,\times)$ gives a counterexample. And one of the points of the current paper is that $SL(2,\Q)$ provides another (counter)example.  In fact it will be somewhat more striking as the (definable) Bohr compactification of $SL(2,\Q)$ is trivial, whereas the (definable) Ellis group will be infinite.  As mentioned earlier, our counterexample is different from the ones provided by Corollary 0.3 of \cite{Krupinski-PillayII}, as we have $G = G^{000}$ (where $G = SL(2,\K)$ which is abstractly simple modulo its finite centre).

This paper will make use of some more model-theoretic machinery, around definable amenability and $f$-genericity, in a $NIP$ environment (bearing in mind that $Th(\Q,+,\times, -, 0, 1)$ has $NIP$, see Section 4.2 of \cite{Belair} for references). 

So let us now assume, in addition to $G$ being a group definable in $M$, that $Th(M)$ has $NIP$.  $G$ is said to be definably amenable if there is a left $G$-invariant Keisler measure $\mu$ on $G$; namely $\mu$ is a map from the Boolean algebra of subsets of $G$ definable in $M$ to the real unit interval $[0,1]$,  $\mu(\emptyset) = 0$, $\mu(G) = 1$, $\mu$ is finitely additive, and $\mu(gX) = \mu(X)$ for all definable $X$ and $g\in G$. 
Now $SL(2,\Q)$ (as a group definable in $(\Q,+,\times, -, 0, 1)$)  will {\em not} be definably amenable, for the same reason that $SL(2,\R)$ is not, see \cite{NIPI}.  But its constituents in the Iwasawa decomposition will be definably amenable. 

Let $\bar M$ be a  very saturated elementary extension of $M$.  By a global type of $G$ we mean a type $p(x)\in S_{G}(\bar M)$.  $p$ is {\em strongly $f$-generic}  if every left $G$-translate of $p$ is $Aut({\bar M}/N)$-invariant (i.e. does not fork over $N$) for some small $N\prec \bar M$  depending only on $p$.  The existence of a strongly $f$-generic type is equivalent to definable amenability of $G$ (\cite{NIPII}). Assuming $G$ to be definably amenable we can take as a definition of $p$ being {\em $f$-generic} that $Stab(p)$ is $G({\bar M})^{00}$, and this is implied by $p$ being strongly $f$-generic. (See Section 3 of \cite{C-S}.)

There are two extreme cases for a strongly $f$-generic global type $p(x)$. The first case is when $p$ and all of its left translates are definable over some small model $N$,  and the second case is when $p$ and all of its left translates are  finitely satisfiable over some small model $M$. In the second case every strongly $f$-generic type is finitely satisfiable in {\em any} small model,  and $G$ is what is called an $fsg$ group (group with finitely satisfiable generics).  In this $fsg$ case, the $f$-generic types, strongly $f$-generic types, and generic types coincide. Here a (left) generic formula is one such that finitely many (left) translates cover the group, and a (left) generic type is one all of whose formulas are (left) generic. 

Finally let us remark that the sister paper \cite{GPPI} on $SL(2,\R)$ was subsequently extended in various ways; first to a larger class of semialgebraic real Lie groups, and secondly to arbitrary real closed base fields in place of $\R$.  See \cite{G. Jagiella} and \cite{Y-L}.  The $p$-adic version should be able to be extended similarly.   Also the analysis we give in this paper could be situated in a more general  environment of definable groups $G$  in $NIP$ structure, where $G$ has a nice ``abstract" Iwasawa decomposition.

\section{Ingredients and building blocks}
Our model-theoretic analysis of $SL(2,\R)$ (acting on its space of types) in \cite{GPPI} made heavy use of the Iwasawa decomposition $SL(2,\R) = K\cdot B(\R)^{0}$  with $K$ the maximal compact subgroup $SO(2,\R)$ and $B(\R)^{0}$ the (real) connected component of the Borel subgroup of upper triangular matrices. Note that both $K$ and $B(\R)^{0}$ are connected and  also have trivial intersection. 

The Iwasawa decomposition for $SL(2,\Q)$ on the other hand has the form $K\cdot B(\Q)$ where $K$ is the maximal compact $SL(2,\Z)$ and $B$ is again the Borel subgroup of upper triangular $2$-by-$2$ matrices. $B(\Q)$ is itself is the semidirect product of the additive and multiplicative groups of $\Q$ (where the action is multiplication by the square).   See \cite{Bruhat-Tits}. So we start with the model-theoretic/dynamical analysis of these building blocks.

\subsection{The additive and multiplicative groups}

If we refer to a theory $T$ it will be $Th(\Q,+,\times,-,0,1)$.  Recall that $P_{n}(x)$ denotes the formula saying that $x$ is  an $nth$ power, and that $T$ has quantifier elimination after adding predicates for all $P_{n}$.  Before getting into details we recall, with references, some basic facts which will be used freely in this section and the rest of the paper.  First the topology on both the standard model $\Q$ and the saturated model $\K$ is the valuation topology.  The following can be found in (or easily deduced from) Section 1 of \cite{Macintyre} (Facts 1 to 3) and Section 2 of \cite{Belair} and make use of Hensel's Lemma.   The (nonzero) $nth$ powers form an open subgroup of finite index in the multiplicative group, and each coset contains representatives from $\z$ even with valuation $0$.  
It is clear that the partial type $\cap_{n}P_{n}(x)$ defines the ``connected component" $(\K^{*})^{0}$ of the multiplicative group $\K^{*}$ of $\K$.  So every  translate of $(\K^{*})^{0}$ can be (type)-defined over  $\z$ too.  

We first describe the complete $1$-types over the standard model $M = (\Q,+,\times, -, 0, 1)$.

\begin{Lemma}  The complete $1$-types over $M$ are precisely the following:
\newline
(a) The realized types  $tp(a/M)$ for each $a\in \Q$.
\newline
(b) for each $a\in \Q$ and coset $C$ of $(\K^{*})^{0}$ in $\K^{*}$  the type $p_{a,C}$ saying that $x$ is infinitesimally close to $a$ (i.e. $v(x-a) > n$ for each $n\in \N$),  and  $(x-a)\in C$ (note this implies $x\neq a$). 
\newline
(c) for each coset $C$ as above the type $p_{\infty,C}$ saying that $x\in C$ and $v(x)<n$ for all $n\in\z$. 
\end{Lemma}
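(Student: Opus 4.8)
The plan is to show these three families exhaust $S_1(M)$ by a direct argument using quantifier elimination in Macintyre's language, together with the fact that the nonzero $n$th powers form open subgroups of finite index whose cosets are represented in $\mathbb{Z}_p$. First I would observe that a complete $1$-type over $M$ is determined by: (i) which affine equalities/inequalities $x = a$ it contains, (ii) the "infinitesimal location" of $x$ — i.e. for each $a \in \mathbb{Q}_p$ whether $v(x-a)$ is larger than every integer, equal to some integer, or (in a suitable sense) smaller than every integer — and (iii) for the relevant witnessing difference $x - a$ (or $x$ itself in the unbounded case), which coset of $(\mathbb{K}^{*})^0$ it lies in, equivalently which congruence $P_n$-conditions it satisfies. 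The point is that by QE every formula over $M$ is a Boolean combination of equalities between polynomials in $x$ with coefficients in $\mathbb{Q}_p$, valuation inequalities $v(f(x)) \geq v(g(x))$ (equivalently, since over a one-variable polynomial ring everything factors, inequalities of the form $v(x - a) \geq v(x - b) + m$), and predicates $P_n(f(x))$; and each of these is decided once one knows the data in (i)--(iii).

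The key steps, in order, are as follows. First, dispose of case (a): if $x = a$ is in the type for some $a \in \mathbb{Q}_p$ the type is realized and equals $\mathrm{tp}(a/M)$. So assume $x \neq a$ for all $a \in \mathbb{Q}_p$. Second, analyze the valuation behaviour: the type must decide, for each $a \in \mathbb{Q}_p$, the cut that $v(x - a)$ makes in $\mathbb{Z}$ (using that $v(\mathbb{K}^{*})$ is an elementary extension of $(\mathbb{Z}, <)$, so a priori $v(x-a)$ could be finite, or positive-infinite, or negative-infinite relative to $\mathbb{Z}$). Show that a nonrealized type either has $v(x - a) > \mathbb{N}$ for some (hence a unique, and moreover any two such $a$ are infinitesimally close so define the "same" $a$ up to the type) $a \in \mathbb{Q}_p$ — this is case (b) — or else $v(x - a) < \mathbb{Z}$ for all $a \in \mathbb{Q}_p$, equivalently $v(x) < \mathbb{Z}$, which is case (c). The intermediate possibility, that all $v(x-a)$ are finite in $\mathbb{Z}$ but no equality holds, is impossible over the standard model $M$: if $v(x - a) = n \in \mathbb{Z}$ is forced, pick $b \in \mathbb{Q}_p$ with $v(b - a) = n$ in the right residue direction and iterate; since $\mathbb{Q}_p$ is spherically complete for sequences of balls of integer radius tending to... — more carefully, the type would then be finitely satisfied in a single point of $\mathbb{Q}_p$ by completeness of $\mathbb{Q}_p$, contradicting non-realization. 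Third, having located the "center" $a$ (or $\infty$), show the only remaining data is the coset of $(\mathbb{K}^{*})^0$ containing $x - a$ (resp. $x$): by QE any further formula is, modulo the valuation data already fixed, equivalent to a condition $P_n(u \cdot (x-a)^k)$ for $u \in \mathbb{Q}_p$, $k \in \mathbb{Z}$, and this is determined by the coset of $x - a$. Finally, check consistency: each $p_{a,C}$ and $p_{\infty,C}$ is realized in $\mathbb{K}$ (take any infinitesimal element in the prescribed coset, which exists by saturation since each coset of $(\mathbb{K}^{*})^0$ meets the infinitesimals and the unbounded elements — this uses that $P_n$ has representatives of every valuation), so these are genuine complete types, and distinct $(a,C)$, $(\infty,C)$ give distinct types since $v(x-a) > \mathbb{N}$ versus $v(x - a') > \mathbb{N}$ forces $a, a'$ infinitesimally close in $\mathbb{Q}_p$ hence equal, and the coset is obviously an invariant.

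The main obstacle I expect is the third step: rigorously reducing an arbitrary quantifier-free formula over $M$ (in Macintyre's language) to the coarse data of "center plus coset", i.e. showing there is no finer invariant. Concretely, one must argue that for a polynomial $f(x) = c\prod_i (x - a_i)^{m_i}$ with $a_i \in \mathbb{Q}_p$, once we know $v(x - a) > \mathbb{N}$ for our distinguished center $a$, the values $v(f(x))$ and the truth of $P_n(f(x))$ depend only on $v(x-a)$ being large (which makes $v(x - a_i) = v(a - a_i)$ a fixed integer for $a_i \neq a$) and on the coset of $(x - a)$ — for the $a_i = a$ factors we get $P_n$ applied to $(x-a)^{m_i}$ times a unit from $\mathbb{Q}_p$, and the coset of $(x-a)$ determines the coset of $(x-a)^{m_i}$ since $(\mathbb{K}^{*})^0$ is divisible modulo the finite-index subgroup... this last point needs the structure of $\mathbb{K}^{*}/(\mathbb{K}^{*})^0$, which is why the lemma is phrased in terms of cosets rather than a single $P_n$. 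In the unbounded case (c) the analogous reduction uses $v(x) < \mathbb{Z}$ to fix $v(x - a_i) = v(x)$ for all $i$ and again reduces $P_n(f(x))$ to the coset of $x$. I would handle this by working in Macintyre's language throughout and citing the finite-index/open-subgroup facts from \cite{Macintyre} and \cite{Belair} already recalled above, rather than re-deriving valuation-theoretic lemmas.
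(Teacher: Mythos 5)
Your overall route is the same as the paper's: quantifier elimination in Macintyre's language, the trichotomy on the cut of $v(x-a)$ in $\z$ via compactness of the standard balls, and openness of the cosets of the $n$th powers to decide the $P_n$-conditions. But there is a concrete gap in your third step, which is exactly the step you flag as the main obstacle. You propose to write an arbitrary polynomial over $\Q$ as $f(x)=c\prod_i(x-a_i)^{m_i}$ with $a_i\in\Q$ ("since over a one-variable polynomial ring everything factors"). This is false: $\Q$ is not algebraically closed, so $f$ may have irreducible factors of degree $\geq 2$ with no roots in $\Q$, and your reduction of $P_n(f(x))$ to the coset of $x-a$ does not go through as written. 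The paper avoids factorization entirely: for $p_{0,C}$ it expands $f(x)=a_ix^i+\cdots+a_mx^m$ with $a_i\neq 0$ and observes that $c^{-i}f(c)=a_i+(\text{terms of valuation}>\z)$, so $c^{-i}f(c)$ lies in the same (open) coset of $P_n$ as the fixed element $a_i$, and the coset of $c^{i}$ is determined by $C$; dually, for $p_{\infty,C}$ the top coefficient $a_m$ dominates after multiplying by $c^{-m}$. Your factorization idea can be repaired (factor into irreducibles over $\Q$; any factor not divisible by $x-a$ takes a value infinitesimally close to a fixed nonzero element of $\Q$ at any realization, hence has determined coset), but as stated the step fails. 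Relatedly, your aside that "the coset of $(x-a)$ determines the coset of $(x-a)^{m_i}$ since $(\K^{*})^{0}$ is divisible modulo the finite-index subgroup" is a non-issue: the quotient map $\K^{*}\to\K^{*}/(\K^{*})^{0}$ is a homomorphism, so the coset of a power is the power of the coset; no divisibility is needed.

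Two smaller points. In your second step, the contradiction you should reach in the "all $v(x-a)$ finite" case is not "the type is realized": the nested definable balls of increasing integer radius forced by the type intersect in a single point $a\in\Q$ by completeness of $\Q$, and then $v(x-a)>n$ for all $n$ already belongs to the type, contradicting the assumption that $v(x-a)$ is finite — i.e., one lands in case (b), which is consistent with non-realization. Also note that polynomial equalities $f(x)=0$ over $\Q$ need no separate analysis for nonrealized types, since $M\prec\bar M$ implies every algebraic formula over $M$ has all its solutions in $M$; this is why the paper only has to decide the formulas $P_n(f(x))$.
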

\begin{proof}  (i) First we observe that  every nonrealized $1$-type over the standard model $M$ is either ``at infinity" namely contains the formulas $v(x)< n$ for all $n\in \z$, or is infinitesimally close to some $a\in \Q$, namely contains the formulas $v(x-a)> n$ for all $n\in \z$.  This depends on compactness of ``balls" defined by $v(x)\geq n$ in the standard model, and is not true over a saturated model, as we remark in 2.2(iii). 

Next we show that each purported complete $1$-type over $M$ described in (b) is consistent.  Fix $a\in \Q$. It suffices, by compactness to show that for every $n$, $k$, and coset $C$ of the $kth$ powers, $v(x-a) > n\wedge (x-a)\in C$ has a solution.  Choose an element $b$  in $C\cap\Z$ (as mentioned earlier we can find one). Let $r$ be a natural number such that $rk>n$. Then $bp^{rk}\in C$ and $v(bp^{rk}) > n$. Let $x = a + bp^rk$, then $(x-a)$ has value $>n$ and is in $C$. 

A similar argument shows consistency of any type of kind (c).  

Note that for any complete type $p(x)$ over $M$ and any $a\in M$, $p$ has to choose some coset $C$ of $(K^{*})^{0}$ such that ``$(x-a)\in C$" is in $p$.  

So it remains to show completeness of the $p_{a,C}$ and $p_{\infty,C}$.  We will do the case of $p_{0,C}$ from (b). (The general case of (b) is similar, by expanding polynomials around $a$.) To show completeness of $p_{0,C}$ it is enough, by quantifier elimination, to show that  $p_{0,C}$ decides each formula of the form $P_{n}(f(x))$ where $f(x)$ is a polynomial over $\Q$. 
Suppose $f(x) = a_{i}x^{i} + a_{i+1}x^{i+1} + \cdots +a_{m}x^{m}$ where   $a_{i}\neq 0$.   Let $c$ realize $p_{0,C}$. Then $c^{- i}f(c) = a_{i} + a_{i+1}c  + .. + a_{m}c^{m-i}$.
As $v(a_{i+1}c + .. + a_{m}c^{m-i}) > \z$, and each (multiplicative) coset of the $nth$ powers is open, it follows that $c^{-i}f(c)$ and $a_{i}$ are in the same coset of $P_{n}$.  But the coset of $P_{n}$ that $c^{-i}$ is in is determined by $c$ realizing $p_{0,C}$. Hence the coset of $P_{n}$ in which $f(c)$ lives is also determined, by $c$ realizing $p_{0,C}$, as required. 

Finally we show completeness of $p_{\infty,C}$ from (c).  Consider again a formula $P_{n}(f(x))$, (with $f(x)$ over $\Q$)  and we want it to be decided by $p_{\infty,C}$. Again let $f(x) = a_{i}x^{i} + ...+ a_{m}x^{m}$ with $a_{i}\neq 0$ and $a_{m}\neq 0$.   Let $c$ realize $p_{\infty,C}$.   Now we consider $c^{-m}f(c) =  a_{i}c^{i-m} + .. +a_{m-1}c^{-1} + a_{m}$. 

Now $v(a_{i}c^{i-m} + ... + a_{m-1}c^{-1}) > \z$. So again as cosets  of  the $nth$ powers are open in the multiplicative group,  it follows that $c^{-m}f(c)$ and $a_{m}$ are in the same coset of $P_{n}$. Hence again the coset of $f(c)$ modulo $P_{n}$ is determined by $c$ realizing $p_{0,C}$. 




\end{proof}

\begin{Rmk} (i) The lemma shows that the definable (with parameters) subsets of $\Q$ are precisely given by (finite) Boolean combinations of formulas $x=a$, $v(x-a)\geq n$ and $(x-a)\in C'$, for $a\in \Q$, $n\in \z$ and $C'$ a coset of the $nth$ powers.  
\newline
(ii)  An identical proof to the above shows that working now over the saturated model ${\bar M} = (\K,....)$, if $p(x)\in S_{1}({\bar M})$ is a nonrealized complete $1$-type ``at infinity", namely containing $v(x)< \Gamma$, then for some coset $C$ of $(\K^{*})^{0}$,  $p$ is axiomatized by $v(x)<\Gamma$ together with $x\in C$. Similarly if $p(x)\in S_{1}(\bar M)$ is nonrealized and says that $v(x-a) > \Gamma$ for some $a\in \K$ then for some $C$ as before  $p$ is axiomatized by $x\neq a$ and $v(x-a)>\Gamma$ together with $(x-a)\in C$. 
\newline
(iii) There will be nonrealized $1$-types over ${\bar M}$ not accounted for in (ii), but we do not have to describe them precisely for the purposes of this paper. 
\end{Rmk}

We start with the additive group. We consider $S_{1}(M)$ as a $(\Q,+)$-flow.  We could and should write it as $S_{\Ga}(M)$ but this is too much notation. 
\begin{Prop} (i) Each type $p(x)\in S_{1}(M)$ of kind (c) is  invariant (under the action of $(\Q,+)$), and these account for all the minimal subflows of $S_{1}(M)$.
\newline
(ii) The global heirs of the types in (i) are precisely the global (strongly) $f$-generics of $(\K,+)$, and are all definable, and invariant under $(\K,+)$. 
\newline
(iii) $(\K,+) = (\K,+)^{0} = (\K, +) ^{00}$.  
\end{Prop}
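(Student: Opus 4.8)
\emph{Proof idea.}
For part (i), the plan is first to check by a one-line computation that each $p_{\infty,C}$ is a fixed point of the $(\Q,+)$-action, so that $\{p_{\infty,C}\}$ is a minimal subflow: if $c\models p_{\infty,C}$ in $\bar M$ and $a\in\Q$, then $v(a+c)=v(c)<\z$ since $v(a)\in\z$, while $a+c=c\,(1+a/c)$ with $v(a/c)=v(a)-v(c)>\z$, so $1+a/c$ is infinitesimally close to $1$, hence lies in $(\K^*)^{0}=\bigcap_{n}P_{n}$ by Hensel, and therefore $a+c\in c\cdot(\K^*)^{0}=C$; thus $a+c\models p_{\infty,C}$. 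For the converse I would use that the realized types are dense in $S_{1}(M)$ (any consistent formula over $M$ has a solution in $M$): the orbit of a realized type is the set of all realized types, whose closure is all of $S_{1}(M)$, so no realized type lies in a minimal subflow; and the orbit of a type $p_{a,C}$ of kind (b) is $\{p_{a',C}:a'\in\Q\}$, so any subnet of $(p_{a',C})_{a'}$ along which $v(a')\to-\infty$ converges to a type containing ``$v(x)<n$'' for all $n\in\z$, i.e.\ to one of kind (c) --- whence the orbit closure of $p_{a,C}$ properly contains a singleton subflow and is not minimal. Hence the minimal subflows are exactly the $\{p_{\infty,C}\}$.

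For (ii) and (iii), for each coset $C$ of $(\K^*)^{0}$ let $\tilde p_{C}\in S_{1}(\bar M)$ be the type axiomatized by ``$v(x)<\Gamma$'' together with ``$x\in C$'' (a complete type, by Remark 2.2(ii)). I would show $\tilde p_{C}$ is the unique heir of $p_{\infty,C}$ over $\bar M$ (which exists by Fact 1.2): an heir of $p_{\infty,C}$ can contain no formula ``$v(x)\geq v(b)$'' with $b\in\bar M^{*}$, since $p_{\infty,C}$ contains no formula of the form ``$v(x)\geq v(a)$'' with $a\in M$; hence the heir is ``at infinity'', and, extending $p_{\infty,C}$, it contains ``$x\in C$'', so it equals $\tilde p_{C}$ by Remark 2.2(ii). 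The computation of the previous paragraph, with $\Gamma$ replacing $\z$, shows $\tilde p_{C}$ is invariant under all of $(\K,+)$; and $\tilde p_{C}$, being the heir of the (automatically) definable type $p_{\infty,C}$, is definable over $M$. Hence every left $(\K,+)$-translate of $\tilde p_{C}$ equals $\tilde p_{C}$, which does not fork over $M$, so $\tilde p_{C}$ is strongly $f$-generic. Part (iii) then follows: $(\K,+)^{0}=(\K,+)$ because $(\K,+)$, being a $\Q$-vector space, is divisible and so has no proper finite-index subgroup; and $(\K,+)^{00}=(\K,+)$ because $(\K,+)$ is abelian, hence definably amenable, so the strongly $f$-generic $\tilde p_{C}$ is $f$-generic with $\operatorname{Stab}(\tilde p_{C})=(\K,+)^{00}$, while $\operatorname{Stab}(\tilde p_{C})=(\K,+)$ by invariance.

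Finally, to finish (ii) it remains to show the $\tilde p_{C}$ exhaust the $f$-generics (equivalently, by the above, the strongly $f$-generics). Using $(\K,+)^{00}=(\K,+)$, a global type is $f$-generic iff its stabiliser is all of $(\K,+)$, i.e.\ iff it is $(\K,+)$-invariant, so the task is to classify the $(\K,+)$-invariant $p\in S_{1}(\bar M)$. No realized type is invariant; if $p$ is nonrealized, $(\K,+)$-invariance forces, for each $c$, the position of $v(x-c)$ relative to $\Gamma$ (the element of $\Gamma$ it equals, or the cut of $\Gamma$ it fills, or whether $v(x-c)>\Gamma$ or $v(x-c)<\Gamma$) to be independent of $c$; then comparing $c$ with $c+d$ as $v(d)$ ranges over $\Gamma$ rules out every case except ``$v(x-c)<\Gamma$ for all $c$'', and taking $c=0$ this says $p$ is at infinity, so $p=\tilde p_{C}$ for some $C$ by Remark 2.2(ii). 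I expect this last step --- ruling out, purely via translation-invariance, the ``exotic'' nonrealized types of Remark 2.2(iii) without a full description of $S_{1}(\bar M)$ --- to be the part requiring the most care; the case analysis on cuts of $\Gamma$ is routine but needs to be carried out.
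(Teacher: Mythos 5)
Your proposal is correct and follows essentially the same route as the paper: the same openness-of-$P_{n}$ computation for invariance of $p_{\infty,C}$, the same observation that every orbit closure contains a type at infinity, and the same identification of the global heirs as the definable, fully $(\K,+)$-invariant, strongly $f$-generic types whose stabilizer $(\K,+)$ forces $(\K,+)^{00}=(\K,+)$. The only place you overcomplicate is the final exhaustiveness step, which you flag as delicate: no case analysis on cuts of $\Gamma$ is needed, since if a $(\K,+)$-invariant nonrealized $q$ contained $v(x)\geq\gamma$ for some $\gamma\in\Gamma$, then translating by any $b$ with $v(b)<\gamma$ would already move $q$; hence $q$ is at infinity and Remark 2.2(ii) finishes the argument.
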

\begin{proof} (i) Let $a$ realize $p_{\infty,C}$.  Then clearly for $b\in \Q$, $v(a+b)<\z$. On the other hand, for $b\in \Q$, $(a+b)/a = 1 +(b/a)$  and note that $v(b/a) > \z$.  As the group of $nth$ powers in $\Q$ is open for all $n$, it follows that $a+b$ and $a$ are in the same coset of the $nth$ powers for all $n$, and so in particular $a+b\in C$. We have shown that $p_{\infty, C}$ is fixed under addition by elements of $\Q$, as required. 
\newline
If $q(x)\in S_{1}(M)$ is arbitrary note that the closure of the orbit (under $(\Q,+)$) of $q$ always contains a ``type at infinity" namely a type of kind (c). Hence the only minimal subflows of $S_{1}(M)$ are those of the form $\{p\}$ for $p$ of kind (c).
\newline
(ii) and (iii).  Let $q$ be a global heir of a type $p_{\infty,C}$ of kind (c). Then $q$ is definable over $M$ and $Stab(q) = (\K,+)$.  This already shows that 
that $(\K,+) = (\K,+)^{00} (= (\K,+)^{000})$ (because any global type $1$-type determines a coset of  $(\K,+)^{000}$). Conversely suppose $q(x)\in S_{1}(\bar M)$ is an $f$-generic. Then by what we have just said, together with the fact that $(\K,+)$ is definably amenable, since it is abelian,  $q$ must be  $(\K,+)$-invariant.  We claim first  that $q$ must be a  ``type at infinity".  For otherwise  ``$v(x)\geq \gamma$" is in $q(x)$ for some $\gamma$ in the value group $\Gamma$ of $\K$.  Then for  $b\in \K$ with $v(b) < \gamma$, $q + b \neq q$, a contradiction.   So $q$ is a type at infinity as claimed. By Remark 2.2(ii), $q$ is axiomatized by $v(x)<\Gamma$ together with $x\in C$ for some coset $C$ of $(\K^{*})^{0}$. But then clearly $q$ is definable over $\Q$ and so is the heir of $p_{\infty, C}$. 
\end{proof}

Now for the multiplicative group. $S_{\Gm}(M)$ denotes the space of complete types over $M$ concentrating on $\Gm$, namely all complete $1$-types except for $x=0$.
$\Gm(\Q)$ is just the multiplicative group $(\Q^{*},\times)$ and $S_{\Gm}(M)$ is a $\Gm(\Q)$-flow.
\begin{Prop} (i) $S_{\Gm}(M)$ has two minimal subflows, the collection of types of kind (b) with $a=0$, namely $P_{0} = \{p_{0,C}:C$ coset of $(\K^{*})^{0}\}$ and the collection of types of kind (c), namely $P_{\infty} = \{p_{\infty,C}: C$ coset of $(\K^{*})^{0}\}$. 
\newline
(ii) The global heirs of the types of the types mentioned in (i) are precisely the global (strongly) $f$-generic types of $\Gm$, all of which are definable.  Moreover the orbit of each such type under $\K^{*}$ is closed. 
\newline
(iii) $(\K^{*})^{00} = (\K^{*})^{0}$. 
\end{Prop}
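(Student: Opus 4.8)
The plan is to prove Proposition 2.5 by the same template as Proposition 2.4, exploiting the semidirect-product picture and the description of $1$-types in Lemma 2.1. For part (i), I would first check that both $P_{0}$ and $P_{\infty}$ are closed $\Gm(\Q)$-invariant sets. Invariance of $P_{\infty}$ is essentially the computation already done in Proposition 2.4(i): if $a$ realizes $p_{\infty,C}$ and $b\in\Q^{*}$, then $v(ab) < \z$ and $ab\in b\cdot C$, which is again a coset of $(\K^{*})^{0}$, so $\Gm(\Q)$ permutes the types in $P_{\infty}$ (transitively, in fact, since the cosets of $(\K^{*})^{0}$ in $\K^{*}$ are represented by elements of $\z$ with valuation $0$, which lie in $\Q^{*}$). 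The same computation with $a$ realizing $p_{0,C}$ shows $\Gm(\Q)$ permutes $P_{0}$. Both sets are closed: $P_{\infty}$ is cut out by $\{v(x)<n : n\in\z\}$ and $P_{0}$ by $\{v(x)>n : n\in\z\}$ together with $x\neq 0$, all closed conditions. Minimality then follows because $\Gm(\Q)$ acts transitively on the (finitely many) cosets, so each set is a single orbit and hence minimal. Finally, to see these are \emph{all} the minimal subflows, I would argue as in 2.4(i): for any $q\in S_{\Gm}(M)$, the closure of its $\Gm(\Q)$-orbit meets $P_{0}\cup P_{\infty}$. Concretely, if $q$ is realized or of kind (b) with $a\neq 0$, multiplying by elements of large valuation pushes it toward a type at $0$; if $q$ is of kind (b) with $a=0$ or of kind (c), it already lies in $P_{0}\cup P_{\infty}$; so no minimal subflow can avoid these two.

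For part (ii), the strategy mirrors 2.4(ii)--(iii). Since $\Gm$ is abelian it is definably amenable, so $f$-generic global types are exactly those with stabilizer $(\K^{*})^{00}$. I would first show any global heir $q$ of $p_{0,C}$ or $p_{\infty,C}$ is $f$-generic: such $q$ is definable over $M$ (heirs of definable types are definable), and one computes $\mathrm{Stab}(q)$ directly. A realization $c$ of (the heir of) $p_{\infty,C}$ has $v(c)<\Gamma$, and for $g\in\K^{*}$, $gc$ has the same property iff $v(g)$ is ``not too positive''; more to the point, $gc\in C$ iff $g\in(\K^{*})^{0}$, and $gc$ is again at infinity for all $g\in\K^{*}$ — wait, that is not stabilizing. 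The cleaner route: $q$ being $f$-generic is equivalent (given definable amenability) to $q$ not forking over $M$, and heirs of types over $M$ do not fork over $M$; so every such heir is $f$-generic. Conversely, given an $f$-generic $q\in S_{\Gm}(\bar M)$, I claim $q$ concentrates on $(\K^{*})^{0}\cdot\{$types at $0$ or at infinity$\}$: if ``$v(x)=\gamma$'' or ``$v(x-a)>\Gamma, v(x)=0$'' type behavior with a genuine ``finite'' valuation occurred, translating by a suitable $g\in\K^{*}$ would move $q$ off $(\K^{*})^{00}$, contradicting $\mathrm{Stab}(q)=(\K^{*})^{00}$; so $q$ is a type at $0$ or at infinity, hence by Remark 2.2(ii) axiomatized by $x\in C$ together with $v(x)>\Gamma$ (resp. $v(x)<\Gamma$), and such a type is definable over $\Q$, so it is the heir of $p_{0,C}$ (resp. $p_{\infty,C}$). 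For closedness of the orbit: the $\K^{*}$-orbit of the heir of $p_{\infty,C}$ consists of the finitely many heirs $p_{\infty,C'}$ (as $C'$ ranges over cosets), since $\K^{*}/(\K^{*})^{0}$ acts and $(\K^{*})^{0}$ stabilizes; a finite set is closed.

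For part (iii), $(\K^{*})^{0}\subseteq(\K^{*})^{00}$ always (it is type-definable over $\z$, of bounded — indeed finite — index). For the reverse, any $f$-generic type determines a coset of $(\K^{*})^{00}$; by part (ii) the $f$-generics are the heirs $p_{0,C}$, and as $C$ ranges over all cosets of $(\K^{*})^{0}$ these heirs are pairwise distinct and their $\K^{*}$-translates realize all cosets of $(\K^{*})^{0}$, so $(\K^{*})^{00}$ cannot be strictly larger than $(\K^{*})^{0}$. (Equivalently: $(\K^{*})^{0}$ has finite index and is type-definable over a small set, so it already witnesses the minimal bounded-index type-definable subgroup, forcing $(\K^{*})^{00}=(\K^{*})^{0}$.)

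The main obstacle I anticipate is part (ii)'s converse direction — pinning down that an arbitrary $f$-generic must be a type ``at $0$ or at infinity''. The subtlety, flagged in Remark 2.2(iii), is that over the saturated model $\bar M$ there are $1$-types not of the simple forms in 2.2(ii) — e.g. types picking out an element of some intermediate valuation $\gamma\in\Gamma$, or sitting in a ball of radius $\gamma$ around a point of $\K$. One must rule these out as $f$-generics by exhibiting, for each such $q$, an element $g\in\K^{*}$ with $g\cdot q\neq q$ yet $g\in(\K^{*})^{00}$ (e.g. $g$ close to $1$ but with $(x-1)$ in a wrong power coset, or with carefully chosen valuation), contradicting $\mathrm{Stab}(q)=(\K^{*})^{00}$. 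Getting this translation argument airtight, using only the finite-index-open structure of power subgroups and the description of $(\K^{*})^{0}$ as $\bigcap_n P_n$, is where the real work lies; the rest is bookkeeping parallel to Proposition 2.4.
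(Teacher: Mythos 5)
Your overall skeleton matches the paper's, but there are three genuine errors that the argument leans on. First, you repeatedly assert that $(\K^{*})^{0}$ has \emph{finite} index in $\K^{*}$ and that there are ``finitely many'' cosets $C$. This is false: each $P_{n}$ has finite index, but $(\K^{*})^{0}=\bigcap_{n}P_{n}$ has quotient $\widehat{\mathbb{Z}}$ (Remark 2.5), so $P_{0}$ and $P_{\infty}$ are uncountable profinite spaces. Consequently $\Q^{*}$ does \emph{not} act transitively on $P_{0}$ --- only the cosets of each $P_{n}$ are represented in $\Q^{*}$ (indeed in $\mathbb{Z}$), not the cosets of the intersection --- so ``each set is a single orbit, hence minimal'' fails. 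The correct statement, and what the paper uses, is that the $\Q^{*}$-orbit of any $p_{0,C}$ is \emph{dense} in $P_{0}$ (it meets every basic clopen set cut out by a coset of some $P_{n}$), and since this holds for every point of $P_{0}$, the flow $P_{0}$ is minimal. Likewise the closedness of the $\K^{*}$-orbit $P_{0}'$ in (ii) cannot be ``a finite set is closed''; it is closed because it is exactly the set of global types implying $v(x)>\Gamma$, an intersection of clopen conditions (via Remark 2.2(ii)).

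Second, your ``cleaner route'' in (ii) --- that $f$-genericity is equivalent to not forking over $M$ --- is wrong, and it is the step that would make \emph{every} $M$-definable type $f$-generic: the heir of $p_{a,C}$ for $a\in\Q^{*}$ is definable over $M$ but is certainly not $f$-generic for $\Gm$ (its stabilizer is contained in $1+\mathfrak{m}$, far from $(\K^{*})^{00}$). The direct computation you started and then abandoned is the right one: $g\cdot p_{\infty,C}'=p_{\infty,gC}'$, so $\mathrm{Stab}(p_{\infty,C}')=(\K^{*})^{0}$ exactly; combined with the fact that all translates lie in $P_{\infty}'$, hence are $M$-definable, this makes $p_{\infty,C}'$ strongly $f$-generic, and the identity $\mathrm{Stab}=G^{00}$ for such types is precisely what yields (iii). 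Relatedly, your (iii) has the inclusions backwards: type-definability and bounded index of $(\K^{*})^{0}$ give the \emph{easy} containment $(\K^{*})^{00}\subseteq(\K^{*})^{0}$ for free (it certainly does not ``force equality''); the content is the reverse containment, which comes only from the stabilizer computation above. Finally, for the converse direction of (ii) the paper's argument is short and you should carry it out: if $q$ implies $-\gamma<v(x)<\gamma$ for some positive $\gamma\in\Gamma$, pick $g\in(\K^{*})^{0}$ with $v(g)>2\gamma$ (possible by saturation, taking $n$-th powers of elements of large valuation); then $gq\neq q$, so $q$ is not $(\K^{*})^{00}$-invariant and not $f$-generic; the remaining types are at $0$ or at infinity and are handled by Remark 2.2(ii).
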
 
\begin{proof} (i) Fix any $p_{0,C}\in P_{0}$. Then it is clear that the closure of its orbit under $\Q^{*}$ equals $P_{0}$. Likewise for $P_{\infty}$. Hence $P_{0}$ and $P_{\infty}$ are minimal subflows. On the other hand it is clear that for any $q(x)\in S_{\Gm}(M)$, the closure of the orbit of $q$ under $\Q^{*}$ intersects both $P_0$ and $P_{\infty}$. Whence  $P_{0}$ and $P_{\infty}$ are the only minimal subflows of $S_{\Gm}(M)$. 
\newline
(ii)  and (iii). Fix $p_{0,C}$.  Let $p_{0,C}'$ be its (unique) global heir. Note that $p_{0,C}'$ is axiomatized again by $v(x) > \Gamma$ and $x\in C$.  It is clear that the the stabilizer of $p_{0,C}'$ (with respect to the action of $\K^{*}$)  is precisely $(\K^{*})^{0}$, whereby $p_{0,C}'$ is $f$-generic. On the other hand the orbit of $p_{0,C}'$ under $\K^{*}$ is precisely $P_{0}'$ the collection of global heirs of the types in $P_{0}$.  Hence $p_{0,C}'$ is also strongly $f$-generic.  In a similar fashion the unique global heir of each $p_{\infty,C}$ is strongly $f$-generic. We have shown that the types in $P_{0}'$ and the analogue $P_{\infty}'$ have stabilizer $(\K^{*})^{0}$, are definable over $M$ and are all strongly $f$-generic.
This already shows that $(\K^{*})^{00} = (\K^{*})^{0}$.  Bearing in mind Remark 2.2 (ii), we have shown that global types at infinity or infinitesimally close to $0$ are  (strongly) $f$-generic. It is easy to see that these are the only (strongly) $f$-generics;  suppose $\gamma\in \Gamma$ is positive, and $q(x)$ is a global $1$-type implying that $-\gamma < v(x) < \gamma$.  We can find $g\in (\K^{*})^{0}$ with $v(g) > 2\gamma$. But then $gq$ implies $v(x) > \gamma$, so $q$ is not invariant under multiplication by $(\K^{*})^{0}$, so could not be $f$-generic. 
\end{proof} 

\begin{Rmk} (i) So note that $\K^{*}/(\K^{*})^{00}$ is $\hat \z$, which is not a compact $p$-adic Lie group. 
\newline
(ii) In fact the valuation homomorphism $v:\K^{*}\to \Gamma$ induces an isomorphism between $\K^{*}/(\K^{*})^{00}$ and $\Gamma/\Gamma^{00}$. 
\end{Rmk} 
\begin{proof} We have seen above that $(\K^{*})^{00} = (\K^{*})^{0}$ which is obviously the intersection of the $(\K^{*})^{n}$. Notice that $v$ takes $(\Q^{*})^{n}$ onto $n\z$ (as $v(p^{nk}) = nk$). So $v$ takes $(\K^{*})^{n}$ onto $n\Gamma$, so establishes an isomorphism between 
$\K^{*}/(\K^{*})^{n}$ and $\Gamma/n\Gamma = \z/n\z$.  This induces an isomorphism between the inverse limit of the $\K^{*}/(\K^{*})^{n}$ and $\hat\z$. 
\end{proof} 

Finally we discuss the  additive and multiplicative groups of the valuation ring $\V$. $\V(M)$ is $(\Z,+)$, and $\V^{*}(M) = (\Z^{*},\times)$, where remember that $\V^{*}$ is defined by $v(x) = 0$.  These groups $(\Z,+)$ and $(\Z^{*},\times)$ are compact groups definable in $\Q$, so by Corollary 2.3 of \cite{O-P},  $(\V,+)$ and $(\V^{*},\times)$ are $fsg$ groups, which have been studied intensively.  We record the basic facts, leaving details to the interested reader. 
\begin{Prop} (i) The universal definable minimal flow of $(\Z,+)$ is the space $S_{\V, na}(M)$ of nonalgebraic types concentrating on $\V$, which are precisely the types in (b) above for $a\in \Z$. 
\newline
(ii) The global coheirs of the types in (i) are precisely the global (strongly)  $f$-generic types of $\V$ which coincide with the generic types of $\V$.  
\newline
(iii) The orbits in $S_{\V,na}(M)$ are indexed by the multiplicative cosets $C$, namely a typical orbit is of the form  $\{p_{a,C}: a\in \Z\}$.
\end{Prop}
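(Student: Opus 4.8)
The plan is to use that $G:=(\V,+)=(\Z,+)$ is an $fsg$ group. Being a compact group definable in $\Q$ it is $fsg$ by Corollary~2.3 of \cite{O-P}, so the general $fsg$ package recalled in Section~1.2 applies: global generic $=$ global $f$-generic $=$ global strongly $f$-generic, all finitely satisfiable in every small model, there is a single minimal subflow of $S_{G}(M)$, and it consists of the generic types over $M$. By Lemma~2.1 the $1$-types over $M=\Q$ concentrating on $\V$ are the realized $tp(a/M)$ with $a\in\Z$ together with the $p_{a,C}$ with $a\in\Z$ and $C$ a coset of $(\K^{*})^{0}$ (the $p_{\infty,C}$ are excluded, since $v(x)\ge 0$ belongs to $S_{\V}(M)$). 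Each $tp(a/M)$ is isolated by the formula $x=a$, so $S_{\V,na}(M)$ is the complement of an open set, hence a closed subset of $S_{\V}(M)=S_{G}(M)$, and it is clearly invariant under addition by elements of $\Z$. Thus $S_{\V,na}(M)$ is a subflow of the universal definable $G$-ambit $S_{G}(M)$, and by the discussion of Section~1.2 it suffices, for (i), to prove it is minimal.

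For minimality I would first compute the action: if $c\models p_{a,C}$ and $g\in\Z$, then $v\bigl((g+c)-(g+a)\bigr)=v(c-a)>\N$ and $(g+c)-(g+a)=c-a\in C$, so $g\cdot p_{a,C}=p_{g+a,C}$, whence the orbit of $p_{a,C}$ is $\{p_{a',C}:a'\in\Z\}$. It then suffices to show $\overline{\{p_{a',C}:a'\in\Z\}}=S_{\V,na}(M)$ for every $C$, i.e.\ that an arbitrary $p_{a,C'}$ is a limit of the orbit. Take a net of $\epsilon\in\Z$ with $v(\epsilon)\to+\infty$ and with $\epsilon$ in the coset of the $k$th powers containing $C'$ for all $k$ up to the current stage (possible since every coset of the $k$th powers meets $\Z$ in elements of arbitrarily large valuation, as used in the proof of Lemma~2.1), and set $a'=a+\epsilon\in\Z$. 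Running through the atomic formulas $v(x-b)\ge m$ and $(x-b)\in D$ of Remark~2.2(i), and using that for a realization of $p_{a',C}$ the infinitesimal summand $x-a'$ changes neither the valuation nor the multiplicative coset of the relevant linear expression, one checks that $p_{a',C}$ eventually decides each such formula the same way as $p_{a,C'}$ (for $b\ne a$ because $v(a'-b)\to v(a-b)$ and $a'-b$ eventually lies in the coset of $a-b$; for $b=a$ because $a'-a=\epsilon$ has the prescribed coset). Hence $p_{a',C}\to p_{a,C'}$, every orbit is dense, $S_{\V,na}(M)$ is minimal, and therefore by Section~1.2 it is the universal minimal definable flow of $(\Z,+)$, proving (i). (Uniqueness of the minimal subflow is automatic from $fsg$, or can be seen directly: a minimal subflow cannot contain an isolated realized type, since the orbit of the realized types is dense in all of $S_{\V}(M)$.)

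Part (iii) falls out of $g\cdot p_{a,C}=p_{g+a,C}$: two types $p_{a,C},p_{a',C'}$ are in the same orbit iff $C=C'$, so the orbits of $S_{\V,na}(M)$ are exactly the sets $\{p_{a,C}:a\in\Z\}$, indexed by the cosets $C$ of $(\K^{*})^{0}$. For (ii): by the $fsg$ package the global generic, $f$-generic and strongly $f$-generic types of $\V$ coincide and are finitely satisfiable in $M$; since all types over $M$ are definable (the Fact of Section~1.2), each is the unique global coheir of its restriction to $M$, and those restrictions are precisely the generic types over $M$, which by (i) are the $p_{a,C}$ with $a\in\Z$. (One also sees directly that each $p_{a,C}$ is generic over $M$: a basic formula $v(x-a)\ge n\wedge(x-a)\in D$ of $p_{a,C}$ contains, after translation by $-a$, a coset of an open finite-index additive subgroup $p^{j}\Z$ of $\Z$, hence is covered by finitely many additive translates; whereas the realized types are visibly not generic.) Thus the global coheirs of the types in (i) are exactly the global generic $=$ $f$-generic $=$ strongly $f$-generic types of $\V$, giving (ii). The only genuinely non-routine step is the density computation for minimality in (i); the rest is the standard $fsg$ material imported from Section~1.2 and \cite{O-P} together with bookkeeping from Lemma~2.1 and the translation action. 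The multiplicative statements for $(\V^{*},\times)=(\Z^{*},\times)$ are proved the same way, with $S_{\V,na}(M)$ replaced by the nonalgebraic types concentrating on the units, i.e.\ the $p_{a,C}$ with $v(a)=0$.
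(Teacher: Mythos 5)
The paper gives no proof of this proposition --- it notes just beforehand that $(\Z,+)$ is an $fsg$ group by Corollary 2.3 of \cite{O-P} and explicitly leaves the details to the reader --- and your argument correctly supplies exactly those details: the $fsg$ package for (ii) and for uniqueness of the minimal subflow, the computation $g\cdot p_{a,C}=p_{g+a,C}$ for (iii), and the density of each orbit $\{p_{a,C}:a\in\Z\}$ in $S_{\V,na}(M)$ via integer representatives of the prescribed power-cosets with large valuation for (i). This is the intended approach, and the proof is correct.
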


\begin{Rmk} Compare to the case where $M = (\R,+,\times)$ and $G$ is the circle group  ($SO_{2}$, or $[0,1)$ with addition mod $1$). The set of nonalgebraic types is the unique minimal flow and there are two orbits, infinitesimal  to the left, and infinitesimal to the right (of each point of $G$ in the standard model). 
\end{Rmk}

\begin{Prop} (i) The universal definable minimal flow of $(\Z^{*},\times)$ is the space $S_{\V^{*}, na}(M)$ of nonalgebraic types concentrating on $\V^{*}$, namely the types $p_{a,C}$ with $v(a) = 0$. 
\newline
(ii) The global (strongly) $f$-generic types are precisely the coheirs of these types and they coincide with the global generic types. 
\newline
(iii) The $\Z^{*}$-orbits in  $S_{\V^{*}, na}(M)$ are precisely the sets $\{p_{a,aC}: a\in \Z^{*}\}$ for $C$ a coset of $(\K^{*})^{0}$.

\end{Prop}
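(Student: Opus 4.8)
The plan is to read everything off Lemma~2.1 and the explicit action of $\Z^{*}$ on $1$-types, exactly in the spirit of the earlier propositions of this subsection (the multiplicative analogue of the $(\Z,+)$ case). By Lemma~2.1 a complete $1$-type over $M$ concentrates on $\V^{*}=\{v(x)=0\}$ iff it is a realized type $tp(a/M)$ with $v(a)=0$ or a type $p_{a,C}$ with $v(a)=0$; the latter are precisely the nonalgebraic ones, so $S_{\V^{*},na}(M)$ is as stated, and since each realized type is isolated (by the formula $x=a$), $S_{\V^{*},na}(M)$ is the complement of an open set, hence closed, and it is clearly $\Z^{*}$-invariant. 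A direct computation gives $g\cdot p_{a,C}=p_{ga,gC}$ for $g\in\Z^{*}$: if $x$ is infinitesimally close to $a$ with $x-a\in C$ then $gx$ is infinitesimally close to $ga$ (as $v(g)=0$) and $gx-ga=g(x-a)\in gC$. This yields (iii) at once: the orbit of $p_{a_{0},C_{0}}$ is $\{p_{ga_{0},gC_{0}}:g\in\Z^{*}\}$, which, putting $C=a_{0}^{-1}C_{0}$ and letting $a=ga_{0}$ run over $\Z^{*}$, is exactly $\{p_{a,aC}:a\in\Z^{*}\}$; every type of $S_{\V^{*},na}(M)$ lies in such an orbit, and distinct cosets $C,C'$ of $(\K^{*})^{0}$ give distinct orbits since $p_{1,C}$ is the unique member of $\{p_{a,aC}:a\in\Z^{*}\}$ with first coordinate $1$.

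For the minimality needed in (i) I would show that the orbit of an arbitrary $p_{a,C}\in S_{\V^{*},na}(M)$ is dense. Fix a target $p_{b,D}$ with $v(b)=0$; a neighbourhood basis at it is given by the clopen sets defined by $v(x-b)\geq N\wedge(x-b)\in D_{n}$, for $N,n\in\N$ and $D_{n}$ the coset of $P_{n}$ containing $D$. Choose $z\in D_{n}\cap\Q^{*}$ with $v(z)\geq N$ (multiply any representative by a high power of $p^{n}$) and set $g=(b+z)/a$. Then $v(g)=v(b/a)=0$ since $v(z/a)=v(z)>0$, so $g\in\Z^{*}$; and $ga-b=z$, so for $x\models g\cdot p_{a,C}=p_{ga,gC}$ we get $x-b=z+(x-ga)$ with $x-ga$ infinitesimal over $M$. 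Hence $v(x-b)=v(z)\geq N$, and, using that $1+\varepsilon\in(\K^{*})^{0}$ whenever $\varepsilon$ is infinitesimal over $M$ (Hensel), $x-b\in z(\K^{*})^{0}\subseteq z(\K^{*})^{n}=D_{n}$; thus $g\cdot p_{a,C}$ lies in the prescribed basic neighbourhood. So the orbit closure of $p_{a,C}$ is all of $S_{\V^{*},na}(M)$, i.e.\ $S_{\V^{*},na}(M)$ is minimal. On the other hand the orbit of a realized type is the set of all realized types, which is dense in $S_{\V^{*}}(M)$ (every $p_{b,D}$ is a limit of such, by the same computation with $z\in\Q$), so no minimal subflow contains a realized type; hence every minimal subflow of $S_{\V^{*}}(M)$ is contained in, and therefore equals, $S_{\V^{*},na}(M)$. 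Since all types over $M$ are definable by a theorem of Delon \cite{Del89}, some/any minimal subflow of $S_{\V^{*}}(M)$ is the universal minimal definable flow (Section~1.2), which proves (i).

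For (ii) I would invoke $fsg$ theory. By Corollary~2.3 of \cite{O-P} the compact definable group $\V^{*}$ is $fsg$, so (Section~1.2) its global $f$-generic, strongly $f$-generic and generic types all coincide, and each of them is finitely satisfiable in $M$; being finitely satisfiable in $M$ and having a unique global coheir (Delon), each such type is the global coheir of its restriction to $M$, and that restriction, not being realized, lies in $S_{\V^{*},na}(M)$. Conversely, for an $fsg$ group the generic types over $M$ constitute the unique minimal subflow of $S_{\V^{*}}(M)$, which by (i) is $S_{\V^{*},na}(M)$, and the global coheir of each of these is again generic; hence the global (strongly) $f$-generics are precisely the global coheirs of the $p_{a,C}$ with $v(a)=0$, and they coincide with the global generics, giving (ii).

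The step I expect to cost the most work is the density argument in (i): one has to check that an arbitrary basic neighbourhood of a target type is hit by a unit translate of the given type, which reduces to the coset bookkeeping above --- choosing $z$ in the correct $P_{n}$-coset with large valuation, and noting that an infinitesimal perturbation does not change its $P_{n}$-coset. Conceptually this uses only the openness of the groups $P_{n}$ and the identity $v((b+z)/a)=0$; everything else is a formal consequence of Lemma~2.1, Delon's theorem, and the cited $fsg$ results, just as in the additive case.
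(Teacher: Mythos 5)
Your proof is correct. The paper offers no proof of this proposition at all --- it explicitly ``leaves details to the interested reader,'' citing only Corollary 2.3 of \cite{O-P} for the $fsg$ property --- and your argument (the explicit action $g\cdot p_{a,C}=p_{ga,gC}$ read off from Lemma 2.1, density of a single orbit via representatives of the $P_{n}$-cosets with large valuation and openness of the $P_{n}$, and the standard $fsg$ facts identifying generic, $f$-generic and strongly $f$-generic global types with the coheirs of the unique minimal subflow) is exactly the intended filling-in of those details, parallel to Fact 2.12 for $SL(2,\Z)$.
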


\subsection{The Borel subgroup}
The Borel subgroup $B$ of $SL(2,-)$ is the group of upper triangular $2$-by-$2$ matrices of determinant $1$. 

So $B(\K)$ is the subgroup of $SL(2,\K)$ consisting of matrices $\begin{bmatrix} a& c\\ 0& {a^{-1}}    \end{bmatrix}$ where $a\in \K^{*}$ and $c\in \K$. There is no harm in identifying the matrix $\begin{bmatrix} a& c\\ 0& {a^{-1}}    \end{bmatrix}\in B(\K)$ with the pair $(a,c)\in \K^*\times \K$.  Likewise for $B(\Q)$.  Note that with this notation the product $(a,c)(\alpha,\beta)$ equals $(a\alpha, a\beta + c\alpha^{-1})$.

\begin{Lemma}
$B(\K)^{00} = B(\K)^{0} = \{(a,c): a\in (\K^{*})^{0}, c\in \K\}$. 
\end{Lemma}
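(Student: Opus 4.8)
The plan is to prove the chain $B(\K)^{0}\subseteq H\subseteq B(\K)^{00}$, where $H:=\{(a,c):a\in(\K^{*})^{0},\ c\in\K\}$; since $B(\K)^{00}\subseteq B(\K)^{0}$ always, this forces $B(\K)^{00}=B(\K)^{0}=H$. To begin, I would record that $H$ is a subgroup of $B(\K)$ --- immediate from the product rule $(a,c)(\alpha,\beta)=(a\alpha,\,a\beta+c\alpha^{-1})$ together with the fact that $(\K^{*})^{0}$ is a subgroup of $\K^{*}$ --- and that $H$ is type-definable, since $(\K^{*})^{0}=\bigcap_{n}P_{n}$.

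For $B(\K)^{0}\subseteq H$: writing $H=\bigcap_{n}H_{n}$ with $H_{n}:=\{(a,c):P_{n}(a)\}$, each $H_{n}$ is a definable subgroup of $B(\K)$ of finite index $[\K^{*}:(\K^{*})^{n}]$, so $B(\K)^{0}\subseteq H_{n}$ for all $n$, hence $B(\K)^{0}\subseteq H$. For $H\subseteq B(\K)^{00}$: I would work with the two coordinate subgroups $U:=\{(1,c):c\in\K\}\cong(\K,+)$ and $T:=\{(a,0):a\in\K^{*}\}\cong\Gm(\K)$. Using the elementary fact that if $A\leq B(\K)$ is (type-)definable over $M$ and $L\leq B(\K)$ is any subgroup, then $A\cap L$ is type-definable over $M$ with $[L:A\cap L]\leq[B(\K):A]$, one sees that $B(\K)^{00}\cap U$ is a type-definable over $M$ subgroup of $U$ of bounded index, hence equals $U$ by Proposition 2.3(iii); and likewise $B(\K)^{00}\cap T$ is a type-definable over $M$ subgroup of $T$ of bounded index, hence contains $\{(a,0):a\in(\K^{*})^{0}\}$ by Proposition 2.4(iii). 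Since every element of $H$ factors as $(a,c)=(a,0)\,(1,a^{-1}c)$ with $(a,0)\in T\cap B(\K)^{00}$ (when $a\in(\K^{*})^{0}$) and $(1,a^{-1}c)\in U\subseteq B(\K)^{00}$, and $B(\K)^{00}$ is a subgroup, we conclude $H\subseteq B(\K)^{00}$.

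Granting Propositions 2.3 and 2.4, this is essentially bookkeeping, and I do not expect a genuine obstacle; the only steps needing mild care are the two applications of ``bounded index is inherited on intersecting with a subgroup'', together with the (standard, in the $NIP$ setting) existence and normality of $B(\K)^{00}$.
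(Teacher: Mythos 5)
Your proof is correct and follows essentially the same route as the paper: the paper's one-line argument ("$B(\K)$ maps onto $\K^{*}$ with kernel $(\K,+)$, so the result follows from Propositions 2.3(iii) and 2.4(iii)") is exactly the reduction you carry out explicitly via the coordinate subgroups $U$ and $T$. Your version just spells out the bookkeeping that the paper leaves implicit.
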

\begin{proof}
$B(\K)$ maps onto $\K^{*}$ with kernel $(\K,+)$.  So the result follows from Proposition 2.3 (iii) and Proposition 2.4(iii). 
\end{proof}

Rather than describe all the global $f$-generic types of $B(\K)$ we will choose one, as follows.  Let $C_{0}$ denote $(\K^{*})^{0}$, the connected component of the multiplicative group. Then $p_{0,C_{0}}'$, the unique global heir of $p_{0, C_{0}}$,  is a global $f$-generic of $\K^{*}$, and likewise  $p_{\infty, C_{0}}'$, the unique global heir of $p_{\infty, 
C_{0}}$ is a global $f$-generic of $(\K,+)$.  Let $\alpha$ realize $p_{0,C_{0}}'$ and $\beta$ realize $p_{\infty,C_{0}}'$ such that $tp(\alpha/{\bar M},\beta)$ is finitely satisfiable in ${\bar M}$. 

We let $\bar{p_0}=\tp((\alpha, \beta)/{\bar M})\in S_B(\bar M)$, and let $p_{0} = tp((\alpha, \beta)/M)$ be its restriction to $M$. Note that this is {\em new} notation which will be used in Section 3 too.

\begin{Lemma}
$\bar{p_0}\in S_B(\bar M)$ is  a global (strongly) f-generic type of  $B(\K)$, every (left) $B(\K)$-translate of which is definable over $M$.   
\end{Lemma}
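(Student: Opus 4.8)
The statement to prove is that $\bar{p_0} = \tp((\alpha,\beta)/\bar M)$, where $\alpha \models p'_{0,C_0}$ and $\beta \models p'_{\infty,C_0}$ with $\tp(\alpha/\bar M,\beta)$ finitely satisfiable in $\bar M$, is a global strongly $f$-generic type of $B(\K)$, and moreover every left $B(\K)$-translate of $\bar{p_0}$ is definable over $M$. Let me think about what structure we have: $B(\K)$ is the semidirect product $\K \rtimes \K^*$, with the pair notation $(a,c)(\alpha,\beta) = (a\alpha, a\beta + c\alpha^{-1})$. So a left translate of the pair $(\alpha,\beta)$ by $(a,c) \in B(\K)$ is $(a\alpha,\ a\beta + c\alpha^{-1})$.

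Let me first think about the translate: the first coordinate becomes $a\alpha$. Since $\alpha \models p'_{0,C_0}$, which is axiomatized by $v(x) > \Gamma$ together with $x \in C_0 = (\K^*)^0$, the element $a\alpha$ satisfies $v(a\alpha) = v(a) + v(\alpha) > \Gamma$ (any fixed value in $\Gamma$ is dominated), and $a\alpha \in aC_0$, the coset of $(\K^*)^0$ containing $a$. So $\tp(a\alpha/\bar M)$ is one of the types $p'_{0,C}$ — still a global heir of a type of kind (b) at $0$, hence definable over $M$ (indeed over $\z$, since the cosets of $(\K^*)^0$ are $\z$-definable by the remark about $\cap_n P_n$). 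For the second coordinate $a\beta + c\alpha^{-1}$: here $\beta$ is ``at infinity'' so $v(\beta) < \Gamma$, while $v(\alpha^{-1}) = -v(\alpha) < \Gamma$ as well, so both $a\beta$ and $c\alpha^{-1}$ are at infinity; I would argue (using the finite-satisfiability hypothesis $\tp(\alpha/\bar M,\beta)$ finitely satisfiable in $\bar M$, so $v(\alpha^{-1})$ is ``much more negative'' than $v(\beta)$, or rather one dominates the other in a controlled way) that $a\beta + c\alpha^{-1}$ is again a type at infinity in a determined coset, hence $\tp$ of the translate's second coordinate over $\bar M$ is definable over $M$ — and in fact, the joint type of the translated pair is definable over $M$, because the relevant cosets and ``at infinity'' statements are all $M$-definable and the interaction between the two coordinates is controlled by the finite satisfiability / Remark 2.2(ii). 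The key technical point is to check that $\tp(a\alpha/\bar M, a\beta + c\alpha^{-1})$ has the ``finitely satisfiable over $\bar M$'' relationship that makes the joint type determined — essentially that left translation preserves the configuration set up in the definition of $\bar{p_0}$.

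From $M$-definability of every left translate, strong $f$-genericity follows by the definition given in the excerpt: $\bar{p_0}$ is strongly $f$-generic because every left $G$-translate is $\mathrm{Aut}(\bar M/N)$-invariant for a small model $N$ — here we can take $N = M$ (or the prime model over $\z$), since an $M$-definable global type is certainly $\mathrm{Aut}(\bar M/M)$-invariant and hence doesn't fork over $M$. Then $f$-genericity (that $\mathrm{Stab}(\bar{p_0}) = B(\K)^{00}$) follows from strong $f$-genericity together with definable amenability of $B(\K)$ (it is solvable, hence amenable, hence definably amenable) as noted in the excerpt; alternatively one computes $\mathrm{Stab}(\bar{p_0})$ directly: $(a,c)$ stabilizes $\bar{p_0}$ iff $(a\alpha, a\beta+c\alpha^{-1})$ has the same type over $\bar M$ as $(\alpha,\beta)$, which forces $a \in (\K^*)^0$ (so that $a\alpha$ stays in coset $C_0$) and then any $c \in \K$ works for the additive coordinate since the ``at infinity'' type in coset $C_0$ absorbs the translate — giving $\mathrm{Stab}(\bar{p_0}) = \{(a,c): a \in (\K^*)^0, c \in \K\} = B(\K)^0 = B(\K)^{00}$ by Lemma 2.12.

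**Main obstacle.** The delicate step is the analysis of the second coordinate $a\beta + c\alpha^{-1}$ of a translate, and showing the resulting joint type over $\bar M$ is still definable over $M$: one must use the specific finite-satisfiability arrangement between $\alpha$ and $\beta$ (that $\tp(\alpha/\bar M,\beta)$ is finitely satisfiable in $\bar M$) to control how $v(\alpha^{-1})$ compares with $v(\beta)$ and with $v(a), v(c)$, and to see that the multiplicative coset of $a\beta + c\alpha^{-1}$ is determined in an $M$-definable way by $(a,c)$ and the original data. Handling the case distinctions ($c = 0$ versus $c \ne 0$, and comparing valuations) and invoking Remark 2.2(ii) to pin down the type at infinity from its valuation behavior and its coset is where the real work lies; everything else is formal bookkeeping with the semidirect product structure and the definitions of ($f$-/strong $f$-)genericity.
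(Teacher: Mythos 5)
Your overall skeleton matches the paper's: analyse a translate $(a,c)(\alpha,\beta)=(a\alpha,\,a\beta+c\alpha^{-1})$ coordinatewise, get $M$-definability of every translate, deduce strong $f$-genericity from that, and compute the stabilizer to be $B(\K)^0$. But the one step you yourself flag as ``the main obstacle'' --- controlling the second coordinate and the joint type --- is exactly the content of the lemma, and the concrete claim you offer for it points the wrong way. Since $\tp(\alpha/\bar M,\beta)$ is finitely satisfiable in $\bar M$ and no element of $\K$ has valuation above $\Gamma$, the coheir condition forces $v(\alpha)<-v(\beta)+\gamma$ for every $\gamma\in\Gamma$; hence $v(\alpha^{-1})-v(\beta)>\Gamma$, i.e.\ $\alpha^{-1}/\beta$ is infinitesimal and $c\alpha^{-1}$ is \emph{negligible} relative to $a\beta$ --- the opposite of your ``$v(\alpha^{-1})$ is much more negative than $v(\beta)$.'' This direction is not a cosmetic point: it is what puts $a\beta+c\alpha^{-1}$ in the coset $aC_0$ rather than $cC_0$. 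If $c\alpha^{-1}$ dominated, then already the translate by $(1,c)$ with $c\notin(\K^*)^0$ would move the second coordinate out of $C_0$, the type would fail to be $B(\K)^0$-invariant, and the stabilizer computation (hence $f$-genericity) would collapse. Your hedge ``one dominates the other in a controlled way'' does not rescue the later claim that ``any $c\in\K$ works for the additive coordinate,'' which requires the correct domination.

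Even with the valuations sorted out, you still need the joint type of the translated pair to be determined, and the paper closes this more cleanly than a coset-plus-valuation computation: since $\tp(\beta/\bar M,\alpha)$ is the \emph{heir} of the $M$-definable, additively $\K$-invariant type $p_{\infty,C_0}'$, multiplying by $a\in\K^*$ and then adding the element $c\alpha^{-1}\in\dcl(\bar M,\alpha)$ still yields a realization of the (unique) heir of $\tp(a\beta/\bar M)$ over $(\bar M, c\alpha^{-1}, a\alpha)$ --- invariance under addition by elements of $\K$ transfers to elements outside $\K$ precisely because the type is definable over $M$. That pins down $\tp\bigl((a\alpha,\,a\beta+c\alpha^{-1})/\bar M\bigr)$ as a product of two $M$-definable types, giving both the $B(\K)^0$-invariance (when $a\in(\K^*)^0$) and the $M$-definability of every translate in one stroke. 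I recommend you replace the valuation heuristic by this heir argument, or at minimum carry out the coheir computation above explicitly and then verify that $\tp\bigl(a\alpha/\bar M,\,a\beta+c\alpha^{-1}\bigr)$ is again finitely satisfiable in $\bar M$; as written, the proposal asserts the conclusion at the point where the proof has to happen.
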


\begin{proof}
We note first that $\bar{p_0}$ is left $B(\K)^{0}$-invariant:  let $(a,c)\in B(\K)^{0}$, which by Lemma 2.9 means that $a\in (\K^{*})^{0}$. 
Now  $(a,c)(\alpha,\beta)=(a\alpha, a\beta+c\alpha^{-1})$. As  $a\in{\K^*}^0$, then $\tp(a\alpha/\K)=\tp(\alpha/\K)$. On the other hand,   $\beta$ also realizes a global $f$-generic type of the multiplicative group. So $tp(a\beta/{\bar M}) = tp(\beta/{\bar M})$.  Also $\tp(a\beta/{\bar M}, c\alpha^{-1}, a\alpha)$ realizes the unique heir of $tp(\beta/{\bar M})$ so as the latter is an $f$-generic of the additive group which is connected, we have that $tp(a\beta + c\alpha^{-1}/{\bar M}, c\alpha^{-1}, a\alpha)$ is an heir of $tp(\beta/{\bar M})$. It follows from all of this that
$tp((a\alpha, a\beta+c\alpha^{-1})/{\bar M}) = tp ((\alpha, \beta)/{\bar M})$ as required. 

\vspace{2mm}
\noindent
As $tp(\alpha/{\bar M})$ is definable over $M$ and $tp(\beta/{\bar M},\alpha)$ is the heir of $tp(\beta/{\bar M})$ which is definable over $M$, then $tp(\alpha,\beta/{\bar M})$ is definable over $M$. Using a similar argument as in the first paragraph, every left $B(\K)$-translate of $\bar{p_0}$ is definable over $M$. 
 
\end{proof}


\begin{Cor}\label{closed-orbit-B}
(i) The $B(\K)$-orbit of $\bar{p_0}$ is closed, and hence is a minimal $B(\K)$-subflow of $S_{B}({\bar M})$.  
\newline
(ii) Let $\bar{\cal J}$ denote the $B(\K)$-orbit of $\bar{p_0}$ and $\cal J$ the closure of the  $B(M)$-orbit of $p_{0}$. Then the restriction to $M$ map gives a homeomorphism between $\bar{\cal J}$ and $\cal J$, and  $\cal J$ is a minimal subflow of $S_{B}(M)$.
\newline
(iii) $\cal J$ is a subgroup of $(S_{B}(M),*)$,  is isomorphic to $B(\bar M)/B(\bar M)^{0}$, and is the Ellis group of the dynamical system $(B(M),S_{B}(M))$.  Moreover $p_{0}$ is an idempotent in $(S_{B}(M),*)$. 
\end{Cor}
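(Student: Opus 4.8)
The plan is to identify the orbit $\bar{\cal J}$ with a compact group and then push everything down to $S_B(M)$ by restriction. Throughout I use, from Lemma 2.10, that $\bar{p_0}$ is (strongly) $f$-generic with every left $B(\K)$-translate definable over $M$; from this (or from the first paragraph of the proof of Lemma 2.10) $\bar{p_0}$ is left $B(\K)^0$-invariant, and since $\bar{p_0}$ is $f$-generic its stabilizer equals $B(\K)^{00}=B(\K)^0$ (Lemma 2.9). Hence $g\mapsto g\bar{p_0}$ induces a bijection of $B(\K)/B(\K)^0$ onto $\bar{\cal J}$, and $B(\K)/B(\K)^0\cong\K^*/(\K^*)^0$, which by Remark 2.5 is, with its profinite topology, the compact group $\hat\z$.

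For (i) the whole point is to show that the resulting orbit map $\psi\colon\hat\z\cong B(\K)/B(\K)^0\to S_B(\bar M)$ is continuous; granting that, $\bar{\cal J}=\psi(\hat\z)$ is compact, hence closed, $\psi$ is a homeomorphism onto $\bar{\cal J}$, and $\bar{\cal J}$, being a single $B(\K)$-orbit, is minimal. For continuity: by $B(\K)^0$-invariance, $(a,c)\bar{p_0}=\tp((a\alpha,a\beta)/\bar M)$ depends only on $a(\K^*)^0$, and, using quantifier elimination in Macintyre's language and the explicit form of $\bar{p_0}$, I would show that for every formula $\phi(x,y)$ over $\bar M$ there is $n$ such that ``$\phi\in(a,c)\bar{p_0}$'' depends only on the class of $a$ modulo $(\K^*)^n$; then $\psi^{-1}$ of a basic clopen set is a union of cosets of $(\K^*)^n/(\K^*)^0$, hence clopen. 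The mechanism is that $v(\alpha)$ and $v(\beta)$ are $\Q$-linearly independent over $\Gamma$ — a consequence of $\tp(\alpha/\bar M,\beta)$ being finitely satisfiable in $\bar M$ — so that in any polynomial $f(a\alpha,a\beta)$ a single monomial $a_{ij}a^{i+j}\alpha^i\beta^j$ strictly dominates the valuation; after factoring it out and using $\alpha,\beta\in(\K^*)^0\subseteq(\K^*)^n$, the residue of $f(a\alpha,a\beta)$ modulo $(\K^*)^n$, and the truth of any valuation inequality on it, are controlled only by $a^{i+j}$, i.e. by $a$ modulo a fixed power of $(\K^*)^n$.

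For (ii): the restriction map $\rho\colon S_B(\bar M)\to S_B(M)$ is continuous, and two global $M$-definable types with the same restriction to $M$ coincide (each being the unique heir of its restriction); since every element of $\bar{\cal J}$ is definable over $M$ (Lemma 2.10), $\rho$ is injective on $\bar{\cal J}$, so it restricts to a homeomorphism of the compact set $\bar{\cal J}$ onto the closed, $B(M)$-invariant set $\rho(\bar{\cal J})$. As $\Q^*$ surjects via $v$ onto every $\K^*/(\K^*)^n$ (Remark 2.5), it is dense in $\hat\z$, so the $B(\Q)$-orbit of $\bar{p_0}$ is dense in $\bar{\cal J}$; applying $\rho$, the set $B(\Q)p_0=\rho(B(\Q)\bar{p_0})$ is dense in the closed set $\rho(\bar{\cal J})$, whence ${\cal J}=\overline{B(\Q)p_0}=\rho(\bar{\cal J})$ and $\rho|_{\bar{\cal J}}\colon\bar{\cal J}\to{\cal J}$ is a homeomorphism. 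Under it the $B(\Q)$-action on ${\cal J}$ becomes translation of $\hat\z$ by the dense subgroup image of $\Q^*$, for which every orbit is dense, so ${\cal J}$ is a minimal $B(M)$-subflow.

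For (iii): $\rho$ restricted to the set of global $M$-definable types is a semigroup homomorphism into $(S_B(M),*)$ (a routine check from $p*q=\tp(gh/M)$ and uniqueness of heirs). The key computation is $\bar{p_0}*(g'\bar{p_0})=g'\bar{p_0}$ for all $g'\in B(\K)$: realizing $\bar{p_0}$ by $(\alpha,\beta)$ and the heir of $g'\bar{p_0}$ over $\bar M(\alpha,\beta)$ by $g'\cdot(\alpha',\beta')$, one computes the group product $(\alpha,\beta)\,g'\,(\alpha',\beta')$ coordinatewise; because $v(\alpha),v(\beta)$ lie in a strictly smaller archimedean class of the value group than $v(\alpha'),v(\beta')$, each coordinate's valuation is dominated by the $g'\cdot(\alpha',\beta')$-contribution while $(\alpha,\beta)$ contributes only factors in $(\K^*)^0$, so — by the same residue analysis as in (i) — the product type over $\bar M$ is $g'\bar{p_0}$. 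With left equivariance of $*$ this gives $(g\bar{p_0})*(g'\bar{p_0})=(gg')\bar{p_0}$, so $\bar{\cal J}$ is closed under $*$ and $gB(\K)^0\mapsto g\bar{p_0}$ is a group isomorphism $B(\K)/B(\K)^0\to(\bar{\cal J},*)$ sending the identity to $\bar{p_0}$. Transporting along the homomorphism $\rho$ (a bijection $\bar{\cal J}\to{\cal J}$ by (ii)), $({\cal J},*)$ is a group isomorphic to $B(\K)/B(\K)^0=B(\bar M)/B(\bar M)^0$ with identity $p_0=\rho(\bar{p_0})$; in particular $p_0*p_0=p_0$. Finally, ${\cal J}$ is a minimal subflow, hence a minimal left ideal of the enveloping semigroup $(S_B(M),*)$ (Theorem \ref{ellisthm}), and $p_0\in{\cal J}$ is an idempotent acting as the identity on ${\cal J}$, so the Ellis group of $(B(M),S_B(M))$ is $p_0*{\cal J}={\cal J}$. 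The step I expect to be the main obstacle is the ``decided at finite level'' claim underlying continuity of $\psi$ in (i) — equivalently, the bare assertion that $\bar{\cal J}$ is closed — together with the parallel dominant-monomial computation in (iii): both reduce to the non-cancellation of valuations (the $\Q$-linear independence of $v(\alpha)$ and $v(\beta)$ over $\Gamma$), after which only the intricate-but-routine bookkeeping of cosets modulo the open finite-index subgroups $(\K^*)^n$ remains.
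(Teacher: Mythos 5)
Your proposal is correct in outline and reaches all three conclusions, but it routes the two hardest points through explicit valuation computations where the paper instead quotes general machinery, so it is worth comparing. For the closedness of the orbit in (i), the paper simply invokes Lemma 1.15 of \cite{P-Y} (in an $NIP$ theory the orbit of a global definable $f$-generic is closed), whereas you prove continuity of the orbit map $B(\K)/B(\K)^{0}\to S_{B}(\bar M)$ directly via quantifier elimination and a dominant-monomial analysis; your mechanism is sound --- the $\Q$-linear independence of $v(\alpha),v(\beta)$ over $\Gamma$ does follow from finite satisfiability of $\tp(\alpha/\bar M,\beta)$ in $\bar M$, and the resulting valuation gaps between monomials are in fact greater than all of $\Gamma$, so the error factors land in $\cap_{n}P_{n}$ --- but this is exactly the computation you flag as not carried out, and it is genuinely laborious (note also the small slip: $(a,c)\bar{p_0}=\tp((a\alpha,a\beta+c\alpha^{-1})/\bar M)$, though your reduction to $c=0$ via $B(\K)^{0}$-invariance is legitimate). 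Part (ii) matches the paper almost exactly (restriction is a continuous bijection on a compact set, hence a homeomorphism), and your minimality argument --- $\Q^{*}$ surjects onto every $\K^{*}/(\K^{*})^{n}$, hence has dense image in $\hat\z$, and translation by a dense subgroup of a compact group is minimal --- is a clean alternative to the paper's appeal to Corollary 4.7 of \cite{Simon}. For (iii) the paper uses the continuous surjection $S_{B}(\bar M)\to B(\bar M)/B(\bar M)^{0}$, which restricts to a homeomorphism on $\bar{\cal J}$ and is visibly multiplicative, so $({\cal J},*)$ inherits the group structure with no computation; you instead compute $(g\bar{p_0})*(g'\bar{p_0})=(gg')\bar{p_0}$ by hand. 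That identity is true, but as stated your coordinatewise valuation argument only pins down the two coordinate $1$-types of the product, not the full $2$-type over $\bar M$; to close this you either have to run the dominant-monomial analysis on arbitrary two-variable polynomials, or (much shorter) argue abstractly: the heir of $\bar{p_0}$ over the model $N=\bar M(\alpha,\beta)$ is $B(N)^{0}$-invariant by the proof of Lemma 2.10 run over $N$, and $g'^{-1}(\alpha,\beta)g'\in B(N)^{0}$, so $(\alpha,\beta)g'(\alpha',\beta')=g'\cdot\bigl(g'^{-1}(\alpha,\beta)g'\bigr)(\alpha',\beta')$ realizes $g'\bar{p_0}$ over $\bar M$. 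With either repair the argument is complete, and the final identification of ${\cal J}=p_{0}*{\cal J}$ with the Ellis group and of $p_{0}$ as the idempotent agrees with the paper.
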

\begin{proof}
(i). This  follows by using the proof of  Lemma 1.15 of \cite{P-Y}. More specifically it is proved there that in the $NIP$ environment, a global definable $f$-generic type $p$ is almost periodic by showing that in fact the orbit of $p$ is closed. 
\newline
(ii). We have seen in Lemma 2.10 that every $p\in {\bar{\cal J}}$ is the unique heir of its restriction to $M$.  Hence the restriction to $M$ map, $\pi$, which is a continuous map between ${\bar{\cal J}}$ and its image $\pi({\bar{\cal J}})$ is a bijection, hence a homeomorphism.  Now it is fairly easy to see directly that $\pi({\bar{\cal J}})$ is a minimal $B(M)$-subflow of $S_{B}(M)$, although we can also appeal to the general result Corollary 4.7 of \cite{Simon} to see this. As $\cal J$ is a closed $B(M)$-subflow of $\pi({\bar{\cal J}})$, it follows that they are equal, and we obtain all of (ii).
\newline
(iii). The natural map from $S_{B}(\bar M)$ to (the profinite group) $B({\bar M})/B({\bar M})^{0}$,  is continuous. Moreover this map induces a 
bijection hence homeomorphism between $\bar{\cal J}$ and $B({\bar M})/B({\bar M})^{0}$.  Composing with the homeomorphism between $\cal J$ and $\bar{\cal J}$ gives a homeomorphism $\theta$ say between ${\cal J}$ and $B({\bar M})/B({\bar M})^{0}$. It is clear that this is also an isomorphism of semigroups, whereby $({\cal J},*)$ is already a group, so must concide with the Ellis group $(u*{\cal J},*)$ ($u$ an idempotent of ${\cal J}$).  As $\bar{p_{0}}/B({\bar M})^{0}$ is  the identity of $B({\bar M})/B({\bar M})^{0}$ by Lemma 2.9, it follows that $p_{0}$ is an (in fact the) idempotent of ${\cal J}$.

\end{proof}

\subsection{The maximal compact subgroup}
As already remarked a  maximal compact subgroup of $SL(2,\Q)$ is $SL(2,\Z)$. We refer to this group as $K$ and sometimes, by abuse of language, we also let $K$  denote the defining formula. So $K(\bar M)$ is $SL(2,\V)$, and  $S_{K}(M)$, $S_{K}(\bar M)$ denote the corresponding type spaces. (The notation $\V$ for the valuation ring in the saturated model $\bar M$ was introduced in Section 2.1.) We have the standard part map $st:SL(2,\V) \to SL(2,\Z)$ the kernel of which is (by definition) the infinitesimals. By Corollary 2.4 of \cite{O-P}, this kernel coincides with  $SL(2,\V)^{00}$.  (Note that as $SL(2,\Z)$ is profinite, this group of infinitesimals is an intersection of definable groups, so coincides with $SL(2,\V)^{0}$.)

From Corollary 2.3 of \cite{O-P},   $K$ is an $fsg$ group. In particular, we have

\begin{Fact} (i)  Left and right generic definable subsets of $K(\bar M)$ coincide and are all satisfiable in $M$.
\newline
(ii)  There exist left generic types in $S_{K}(\bar M)$, which by (i) coincide with right generic types. 
\newline
(iii) The unique minimal $K$-subflow of $S_{K}(M)$ is the set $\cal I$ of generic types over $M$.
\newline
(iv)  Likewise the unique minimal subflow ${\cal I}'$ of $S_{K}({\bar M})$ is the set of global generic types, each such global generic type $q$ being the unique coheir of $q|M\in {\cal I}$. 
\newline
(v) The standard part map $st$ induces  an isomorphism (in fact homeomorphism) between $K(\bar M)/K({\bar M})^{00}$ and $SL(2,\Z)$
\end{Fact}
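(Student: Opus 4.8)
The statement asserts that $K=\SL(2,\Z)$ behaves exactly like a typical $fsg$ group, so the plan is to derive everything from the general theory of $fsg$ groups in $NIP$ theories together with the two cited corollaries of \cite{O-P}. The group $K$ is definable in $M=(\Q,+,\times,-,0,1)$ and its underlying set $SL(2,\Z)$ is compact in the valuation topology, so Corollary 2.3 of \cite{O-P} gives that $K$ is $fsg$; and Corollary 2.4 of \cite{O-P} identifies $K(\bar M)^{00}$ with the kernel of the standard part map $\st\colon SL(2,\V)\to SL(2,\Z)$, this kernel being moreover equal to $K(\bar M)^{0}$ since $SL(2,\Z)$ is profinite (the infinitesimal matrices are the intersection of the definable finite-index congruence subgroups $\{B\equiv I \bmod p^{n}\}$, which is sandwiched between $K^{0}$ and $K^{00}$). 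With these two inputs in hand, parts (i)--(v) are instances of standard material in \cite{NIPI}, \cite{GPPII} and \cite{P-Y}, applied to $K$.

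For (i) and (ii): $G$ being $fsg$ means there is a global type whose every left translate is finitely satisfiable in some small model. In an $NIP$ theory this is known to imply that a definable subset of $K(\bar M)$ is left generic iff it is right generic, that every generic definable set is finitely satisfiable in $M$ (indeed in any small model), and that global generic types exist and coincide with the global (strongly) $f$-generic types. This is precisely the content of the $fsg$ sections of \cite{NIPI} (see also \cite{GPPII}), so (i) and (ii) follow immediately.

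For (iii) and (iv): in any $fsg$ group, over a model over which all types are definable, the set of generic types over $M$ is a closed $K(M)$-subflow of $S_{K}(M)$ and is the unique minimal one, since the orbit closure of any type contains a generic type while the orbit closure of a generic type is exactly this set; this is again standard $fsg$ material. The same argument over $\bar M$ shows that the set ${\cal I}'$ of global generic types is the unique minimal subflow of $S_{K}(\bar M)$. Finally, a global generic $q$ is by (i)/(ii) finitely satisfiable in $M$, hence a coheir of $q|M$; and by the Fact of Section 1.2 (all types over $M$ are definable, by Delon's theorem \cite{Del89}) such a coheir is unique, so $q$ is \emph{the} coheir of $q|M$. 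The resulting bijection between $\cal I$ and ${\cal I}'$ is a homeomorphism since both spaces are compact Hausdorff.

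For (v): the map $\st\colon SL(2,\V)\to SL(2,\Z)$ is a surjective group homomorphism — surjectivity holds because any matrix in $SL(2,\Z)$ lifts entrywise into $SL(2,\V)$, e.g.\ by lifting three entries arbitrarily and solving for the fourth, or by correcting the determinant via Hensel's lemma — and it is continuous for the logic topology on $K(\bar M)/K(\bar M)^{00}$. Its kernel is $K(\bar M)^{00}$ by Corollary 2.4 of \cite{O-P} as recalled above, so $\st$ factors through a continuous group isomorphism $K(\bar M)/K(\bar M)^{00}\to SL(2,\Z)$, which is a homeomorphism because its source is compact in the logic topology and its target is Hausdorff. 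The only points genuinely requiring a hands-on check are this surjectivity of $\st$ and the translation of the abstract $fsg$ statements into the concrete setting; the potential main obstacle — proving that $K$ is $fsg$ and pinning down $K^{00}$ — is supplied entirely by the cited corollaries of \cite{O-P}, so no serious difficulty remains.
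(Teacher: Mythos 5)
Your proposal is correct and follows essentially the same route as the paper, which states this as a Fact with no written proof beyond citing Corollaries 2.3 and 2.4 of \cite{O-P} (for $K$ being $fsg$ and for $\ker(\st)=K(\bar M)^{00}$) and implicitly appealing to the standard theory of $fsg$ groups in $NIP$ theories together with Delon's theorem on definability of types over $\Q$. The extra details you supply (uniqueness of coheirs, surjectivity of $\st$, the compactness argument for the homeomorphism in (v)) are exactly the routine verifications the authors leave to the reader.
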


Recall that in the current situation where all types over $M$ are definable, we have the semigroup operation $*$ on $S_{K}(M)$, and $\cal I$ is a left ideal under $*$.

From Theorem 3.8 of \cite{Pillay-top} for example, we see that the Ellis group of the action of $SL(2,\Z)$ on $S_{K}(M)$ is canonically isomorphic to $K/K^{00} = SL(2,\Z)$.   With notation as in Fact 2.12 this Ellis group is $u*{\cal I}$ for some/any idempotent in ${\cal I}$.  Different choices of $u$ give isomorphic groups and the collection of such $u*{\cal I}$ partitions ${\cal I}$. 
We will elaborate slightly on these basic facts.

\begin{Lemma}\label{q_0t=tq_0}
\begin{enumerate}
\item $\cal I$ is a two-sided ideal of $(S_K(M),*)$.   
\item For any $q\in \cal I$, $q*S_K(M)$ is the copy of the  Ellis group which contains $q$.

\end{enumerate}
\end{Lemma}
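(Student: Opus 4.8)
The plan is to exploit the $fsg$ property of $K = SL(2,\mathbb{Z})$ heavily, since it collapses the usual distinction between left- and right-genericity. For part (1), I already know from Fact 2.12 that $\mathcal{I}$ is the set of generic types over $M$ and that it is a left ideal. To get that it is also a right ideal, I would take $q \in \mathcal{I}$ and an arbitrary $r \in S_K(M)$ and show $q * r \in \mathcal{I}$, i.e.\ that $q*r$ is generic. Writing $q*r = \operatorname{tp}(gh/M)$ with $g \models q$ and $h$ realizing the heir of $r$ over $(M,g)$: since $K$ is $fsg$, $q$ is also a global-restriction of a \emph{left} generic, and left-translating a generic type by any element of $K(\bar M)$ keeps it generic; more to the point, for $fsg$ groups every generic type is \emph{both} a left and right generic, and the product of a right-generic type with anything on the right stays right-generic (a right-generic formula $\varphi(x)$ has the property that finitely many right translates $\varphi(x)h_i^{-1}$ cover $K$; unwinding the definition of $*$ shows each formula in $q*r$ is implied by a right-generic formula in $q$). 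Concretely, I would argue: $gh \models q*r$, and since every formula of $\operatorname{tp}(gh/M)$ is over $M$ and $g$ realizes a right-generic type, for $\psi(x) \in q*r$ there is $\varphi(x) \in q$ with $\varphi(x) \vdash \psi(xh)$ holding for the relevant $h$; genericity of $\varphi$ on the right then forces $\psi$ to be generic. The cleanest phrasing may instead be to cite that in an $fsg$ group the (unique) minimal left subflow equals the (unique) minimal right subflow, hence $\mathcal{I}$ is a minimal right ideal as well, and therefore a two-sided ideal.

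For part (2), fix $q \in \mathcal{I}$. By part (1), $q * S_K(M) \subseteq \mathcal{I}$, and it is a right ideal. I want to identify it with the Ellis group copy containing $q$. First I would note $q * S_K(M)$ is closed (continuous image of a compact space under $p \mapsto q*p$, which is continuous in the second coordinate) and $K$-invariant, hence a subflow; being contained in $\mathcal{I}$, which is minimal, it equals $\mathcal{I}$ — wait, that would make it all of $\mathcal{I}$, which is too big. The correct approach: $q*S_K(M)$ is a \emph{left} ideal contained in $\mathcal{I}$. Since $\mathcal{I}$ is a minimal left ideal, and a minimal left ideal inside a minimal left ideal of a compact right-topological semigroup, the standard Ellis theory (Theorem \ref{ellisthm}) tells us $q * \mathcal{I}$ is a subgroup when $q$ is an idempotent — but $q$ need not be idempotent. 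The right statement, which I would prove directly: there is a unique idempotent $u \in \mathcal{I}$ with $u * q = q$ (namely $u$ is the identity of the group $u*\mathcal{I}$ containing $q$), and then $q * S_K(M) = q * \mathcal{I} = u * \mathcal{I}$, the Ellis group copy containing $q$. To see $q * \mathcal{I} = q * S_K(M)$: $\supseteq$ is part (1) plus $q*S_K(M) \subseteq \mathcal{I}$ so $q*S_K(M) = q*(S_K(M))$ and multiplying $q$ into all of $S_K(M)$ lands in $\mathcal{I}$, while $q * \mathcal{I} \subseteq q * S_K(M)$ trivially; for the reverse, for any $p$, $q*p \in \mathcal{I}$, and one shows $q * p = q * (u' * p)$ for a suitable idempotent $u'$, with $u'*p \in \mathcal{I}$. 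Finally, that $q*\mathcal{I}$ equals the Ellis group copy containing $q$: since $\mathcal{I}$ is minimal, for the idempotent $u$ with $u*q=q$ we have $q * \mathcal{I} \subseteq u * \mathcal{I}$ (as $q \in u*\mathcal{I}$ and $u*\mathcal{I}$ is a subsemigroup — actually a group), and conversely $u*\mathcal{I} = u * (\mathcal{I}) $; using that in the group $u*\mathcal{I}$ there is $q'$ with $q*q' = u$, we get $u*\mathcal{I} = q*q'*\mathcal{I} \subseteq q*\mathcal{I}$, giving equality.

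The main obstacle, I expect, is part (2): the bookkeeping of which idempotent to use and verifying $q * S_K(M) = q*\mathcal{I} = u*\mathcal{I}$ without circularity. The clean route is to first establish, from the general structure theory of compact right-topological semigroups and minimal left ideals (Ellis's theorem as stated, plus the standard fact that every element $q$ of a minimal left ideal $\mathcal{I}$ satisfies: there is a unique idempotent $u \in \mathcal{I}$ with $u*q = q$, and $q*\mathcal{I} = u*\mathcal{I}$ is a group), that $q * \mathcal{I}$ is precisely the Ellis-group copy through $q$; and then separately use part (1) (two-sidedness) to upgrade $q*\mathcal{I}$ to $q * S_K(M)$, since $q*S_K(M) \subseteq \mathcal{I}$ forces $q*S_K(M) = q*(q'*S_K(M)) \subseteq q*\mathcal{I}$ for appropriate $q' \in \mathcal{I}$, while $\supseteq$ is immediate. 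Part (1) itself should be routine modulo correctly invoking the $fsg$ fact that left and right genericity coincide, which is exactly Fact 2.12(i).
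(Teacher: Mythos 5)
Your overall strategy coincides with the paper's (part (1) via the $fsg$ coincidence of left and right genericity, part (2) via the idempotent of the Ellis-group copy through $q$ and a chain of inclusions), and your part (2), once you discard the false start about $q*S_K(M)$ being a left ideal (it is a right ideal) and settle on $q*S_K(M)=(u*q)*S_K(M)=u*(q*S_K(M))\subseteq u*\mathcal{I}\subseteq u*q*\mathcal{I}\subseteq q*\mathcal{I}\subseteq q*S_K(M)$, is exactly the paper's computation. But part (1) has a genuine gap in both of the executions you offer. Your fallback --- ``$\mathcal{I}$ is the unique minimal right subflow, hence a minimal right ideal, hence two-sided'' --- conflates being invariant under the right action of the group $K(M)$ with being a right ideal of the semigroup $(S_K(M),*)$. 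These are not the same: the operation $*$ is continuous only in the left-hand argument, so a closed set that is stable under $q\mapsto q\cdot g$ for $g\in K(M)$ need not be stable under $q\mapsto q*p$ for arbitrary types $p$ (the limit argument that identifies minimal subflows with minimal \emph{left} ideals uses continuity of $p\mapsto p*q$ and has no right-handed analogue). Your concrete argument is also off: with $g\models q$ and $h$ realizing the heir of $r$ over $(M,g)$, the heir property gives you, for $\psi(gy)\in\tp(h/M,g)$, only a representative $\psi(g'y)\in r$ with $g'\in K(M)$; it does not produce a formula $\varphi(x)\in q$ with $\varphi(x)\vdash\psi(xh)$, and in any case $h$ is not a parameter of the base over which $q$ is generic, so ``right-translating by $h$ preserves genericity'' does not literally apply.

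The fix, which is the paper's proof, is to move the generic element to the side where translation makes sense: write $q*p=\tp(ab/M)$ with $b\in K(\bar M)$ realizing $p$ and $a$ realizing the unique \emph{coheir} of $q$ over $\bar M$ (legitimate by heir--coheir duality and uniqueness of heirs/coheirs over $M$). Then $\tp(a/\bar M)$ is the global generic extending $q$ (Fact 2.12(iv)), and $b$ is an honest element of $\bar M$, so $\psi(x)\in\tp(ab/\bar M)$ iff $\psi(xb)\in\tp(a/\bar M)$; right genericity of $\psi(xb)$ gives right genericity of $\psi(x)$, so $\tp(ab/\bar M)$ is generic and its restriction $q*p$ lies in $\mathcal{I}$. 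With part (1) repaired this way, your part (2) goes through.
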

\begin{proof}
\begin{enumerate}
\item  Let $q\in \cal I$ and $p\in S_K(M)$.  Let $b\in K(\bar M)$ realize $p$ and let $a$ realize the unique coheir of $q$ over $\bar M$.  Then $tp(ab/M)$ realizes $q*p$.  On the other hand, $tp(a/\bar M)$ is right generic, whereby $tp(ab/\bar M)$ is also right generic, so by Fact 2.12 (iv), $q*p =tp(ab/M)\in {\cal I}$. 

\item Again let $q\in \cal I$. Let $E\subseteq \cal I$ be the copy of the Ellis group which contains $q$, and let $q_{0}$ be an idempotent in $E$. 
Then $ q*S_{K}(M) = (q_{0}*q)*(S_{K}(M)) = q_{0}*(q*S_{K}(M))\subseteq q_{0}*{\cal I}$ (using part 1.) $\subseteq q_{0}*q*{\cal I} \subseteq q*{\cal I} \subseteq q*S_{K}(M)$. This shows that $q*S_{K}(M) =  q_{0}*{\cal I}$ which equals $E$.

\end{enumerate}

\end{proof}

As usual for $x,y$ in a given group $G$, $x^y$ denotes the conjugate $yxy^{-1}$ of $x$ by $y$ and the notation extends naturally to subsets $X$ of $G$ in place of $x\in G$. 

In our context  $G = SL(2,\K)$ and $K(\bar M)$ is $SL(2,\V)$.
\begin{Lemma} $(K(\bar M)^{0})^{g} = K(\bar M)^{0}$ for all $g\in SL(2,\Q) = G(M)$. 
\end{Lemma}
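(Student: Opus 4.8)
The plan is to exploit the fact that $K(\bar M)^0$ is a characteristic-type subgroup of $K(\bar M)$ — specifically, the group of infinitesimals of $SL(2,\V)$, i.e. the kernel of $\st: SL(2,\V)\to SL(2,\Z)$ — and to leverage this intrinsic description to show conjugation by an element of $SL(2,\Q)$ fixes it. First I would observe that for $g\in SL(2,\Q)$, conjugation $x\mapsto x^g$ is an automorphism of $SL(2,\K)$ defined over $M$ (since $g$ has coordinates in $\Q$), so it takes type-definable-over-$M$ bounded-index subgroups to type-definable-over-$M$ bounded-index subgroups. Thus $(K(\bar M)^0)^g$ is again a type-definable-over-$M$ subgroup of bounded index in $K(\bar M)^g$. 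The subtlety is that $K(\bar M)^g = SL(2,\V)^g$ need \emph{not} equal $SL(2,\V)$ — conjugating $SL(2,\V)$ by a general element of $SL(2,\Q)$ moves the lattice — so I cannot simply invoke uniqueness of the connected component of a \emph{fixed} group.

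The key step, and the main obstacle, is therefore to show that even though $SL(2,\V)^g\ne SL(2,\V)$ in general, the \emph{infinitesimal part} is preserved: $\ker(\st)^g=\ker(\st)$. For this I would argue as follows. $K(\bar M)^0$ is the intersection of all definable subgroups of $K(\bar M)$ of finite index; equivalently, since $SL(2,\Z)$ is profinite, it is $\bigcap_N \{x\in SL(2,\V): x\equiv \mathrm{Id} \bmod \mathfrak{m}^N\}$ where $\mathfrak m$ is the maximal ideal — i.e. the set of $x\in SL(2,\K)$ all of whose matrix entries are infinitesimally close to those of the identity matrix. But "being infinitesimally close to $\mathrm{Id}$" is a condition (on matrix entries) that is visibly invariant under conjugation by any element $g$ with entries in $\Q$: if $v(x_{ij}-\delta_{ij})>\Gamma$ for all $i,j$, then writing $x = \mathrm{Id}+\epsilon$ with $\epsilon$ infinitesimal (entrywise), $gxg^{-1} = \mathrm{Id} + g\epsilon g^{-1}$, and $g\epsilon g^{-1}$ is again entrywise infinitesimal because $g,g^{-1}$ have fixed entries in $\Q$ (of bounded valuation) and the entries of $g\epsilon g^{-1}$ are $\Q$-linear combinations of the infinitesimal entries of $\epsilon$. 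Hence $g\cdot(K(\bar M)^0)\cdot g^{-1}\subseteq K(\bar M)^0$, and applying the same to $g^{-1}$ gives the reverse inclusion, so $(K(\bar M)^0)^g = K(\bar M)^0$.

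To package this cleanly I would (i) recall from the discussion preceding Fact 2.12 that $K(\bar M)^{00}=K(\bar M)^0=\ker(\st:SL(2,\V)\to SL(2,\Z))$, and that this kernel is exactly the group of matrices congruent to the identity modulo the infinitesimals; (ii) note that conjugation by $g\in SL(2,\Q)$ preserves the property "entrywise infinitesimally close to $\mathrm{Id}$", by the elementary computation above (bilinearity of the conjugation map in the infinitesimal perturbation, with fixed $\Q$-rational, hence finite-valuation, coefficients from $g$ and $g^{-1}$); (iii) conclude the two inclusions. No heavy machinery is needed beyond the identification of $K(\bar M)^0$ with the infinitesimal kernel; the one genuine point to get right is that one must work with this concrete description rather than with the abstract "connected component of $SL(2,\V)^g$", since the ambient compact group does change under conjugation.
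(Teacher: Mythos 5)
Your proof is correct and takes essentially the same approach as the paper: both arguments rest on identifying $K(\bar M)^{0}$ with the infinitesimal neighbourhood of the identity computed inside the ambient group $SL(2,\Q)$ (the paper phrases this as the intersection of the open semialgebraic identity neighbourhoods of $SL(2,\Q)$ and notes that this family is conjugation-invariant; you make the same point by the explicit entrywise computation $g(\mathrm{Id}+\epsilon)g^{-1}=\mathrm{Id}+g\epsilon g^{-1}$). One minor notational slip: the relevant condition on the entries is $v(x_{ij}-\delta_{ij})>n$ for all $n\in\z$ (infinitesimal relative to the standard model), not $v(x_{ij}-\delta_{ij})>\Gamma$, which for an element of $\K$ would force $x=\mathrm{Id}$; the argument is unaffected.
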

\begin{proof} We know that $K(\bar M)^{0}$ is the kernel of $st:K(\bar M) \to K(M)$, so equal to $\cap_{V}V(\bar M)$ where $V$ ranges over open semialgebraic neighbourhoods of the identy in $K(M) = SL(2,\Z)$. But $SL(2,\Z)$ is an open (semialgebraic) subgroup of $SL(2,\Q)$, so $K(\bar M)^{0} = \cap_{V} V(\bar M)$ where $V$ ranges over  open semialgebraic neighbourhoods of the identity in $SL(2,\Q)$.
But clearly the family of open semialgebraic neighbourhoods of the identity in $SL(2,\Q)$ is invariant under conjugation  by elements of $SL(2,\Q)$. Hence the lemma follows.

\end{proof}

\begin{Cor}  Let $g\in G(\bar M)$ and $t\in K(\bar M)^{0}$ be such that $tp(g/t,M)$ is finitely satisfiable in $M$. Then $t^{g}\in K({\bar M})^{0}$.  If in addition, $tp(t/M)$ is a generic type of $K$ then so is $tp(t^{g}/M)$. 

\end{Cor}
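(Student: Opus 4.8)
The plan is to prove the first assertion by a direct conjugation/approximation argument, then deduce the second from genericity of $K$ together with Lemma~\ref{q_0t=tq_0} (or Fact 2.12). First I would record that $t^g$ lands in $SL(2,\K)$ automatically (conjugation is a group operation inside $G(\bar M)$), so the content is that $t^g \in K(\bar M)^0$, i.e. that $t^g$ is infinitesimal as an element of $SL(2,\V)$. The key input is Lemma 2.15: $(K(\bar M)^0)^h = K(\bar M)^0$ for every $h \in G(M) = SL(2,\Q)$. So if $g$ were a standard element the claim would be immediate; the issue is that $g$ is a (possibly non-standard) element of $G(\bar M)$. This is exactly where the hypothesis that $tp(g/t,M)$ is finitely satisfiable in $M$ enters.

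The main step is to transfer from the standard case to the saturated case via finite satisfiability. I would argue as follows. Since $K(\bar M)^0 = \bigcap_V V(\bar M)$ with $V$ ranging over definable (semialgebraic) open neighbourhoods of the identity in $SL(2,\Q)$, it suffices to show $t^g \in V(\bar M)$ for each such $V$. Fix $V$. By Lemma 2.15, for each $h \in SL(2,\Q)$ we have $h^{-1} V(\bar M) h \supseteq K(\bar M)^0 \ni t$, equivalently $t^h \in V(\bar M)$; in fact, more usefully, there is a definable open neighbourhood $W = W_V$ of the identity in $SL(2,\Q)$ such that for all $h \in SL(2,\Q)$, $h^{-1} W h \subseteq V$ — this is just the statement that the filter of definable identity-neighbourhoods is conjugation-invariant in the standard model, applied uniformly (the family of such $W$ is itself a definable family, so ``for all $h$, $h^{-1}Wh \subseteq V$'' is a first-order condition picking out a nonempty definable set of pairs). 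Now $t \in K(\bar M)^0 \subseteq W(\bar M)$. Consider the formula $\varphi(x) := $ ``$x^{-1} t x \notin V$'' (with parameter $t$, and using that $V, W$ are $\emptyset$-definable). If $g$ satisfied $\varphi$, then since $tp(g/t,M)$ is finitely satisfiable in $M$ there would be $h \in SL(2,\Q)$ with $h^{-1} t h \notin V(\bar M)$; but $t \in W(\bar M)$ and $h^{-1} W(\bar M) h \subseteq V(\bar M)$ for every standard $h$, a contradiction. Hence $g^{-1} t g = t^g \in V(\bar M)$, and since $V$ was arbitrary, $t^g \in K(\bar M)^0$.

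I anticipate the main obstacle is the bookkeeping in the previous paragraph: one must make sure the ``uniformly conjugation-invariant neighbourhood'' $W_V$ really exists as a \emph{definable} set over $\emptyset$ (or at least over $M$) so that the finite-satisfiability transfer applies to a genuine formula with parameters only from $M \cup \{t\}$. The cleanest way is probably to avoid choosing $W$ and instead directly use that for each standard $h$ the set $\{x : x^{-1} t x \in V\}$ contains $h$ (because $t \in K(\bar M)^0 = (K(\bar M)^0)^h \subseteq V(\bar M)$ conjugated back), i.e. $SL(2,\Q) \subseteq \{x : x^{-1} t x \in V\}(\bar M)$; then finite satisfiability of $tp(g/t,M)$ in $M$ forces $g$ into this definable set as well. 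That phrasing sidesteps the need for $W$ entirely and is the version I would write up.

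For the last sentence: suppose in addition $tp(t/M)$ is a generic type of $K$. By the first part $t^g \in K(\bar M)^0 \subseteq K(\bar M)$, so $tp(t^g/M) \in S_K(M)$; I must show it is generic. Here I would use that, since $tp(g/t,M)$ is finitely satisfiable in $M$, the map $x \mapsto x^{g}$ behaves on $tp(t/\bar M)$'s restriction like conjugation by a standard element on the level of definable sets: for any definable (over $M$) $D \subseteq K$, ``$t^g \in D$'' holds iff ``$t^h \in D$'' holds for some/all $h \in SL(2,\Q)$ approximating $g$. Now conjugation by a fixed standard $h \in SL(2,\Q)$ is an automorphism of the structure fixing $M$ pointwise only up to the action on $K$ — more carefully: $h$ normalizes neither $K$ nor $M$-definable sets in general, but $tp(t/M)$ generic means every formula in it is generic, i.e. finitely many $K$-translates cover $K$; genericity is preserved under the automorphism of $S_K(M)$ induced by any element of $SL(2,\Q)$ normalizing $K$, and in any case the set $\cal I$ of generic types is the unique minimal subflow of $S_K(M)$ and is a two-sided ideal (Lemma~\ref{q_0t=tq_0}(1)). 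So it suffices to observe $tp(t^g/M) \in {\cal I}$: writing $tp(t^g/M) = $ (something of the form) $p * tp(t/M) * p'$ via the finitely-satisfiable-conjugation, membership in the two-sided ideal $\cal I$ is immediate. I would present this deduction as the short half of the proof; the transfer argument for $t^g \in K(\bar M)^0$ is the substantive part.
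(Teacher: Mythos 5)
Your proof of the first assertion (in the ``cleanest way'' version you say you would actually write up) is correct and is exactly the paper's argument: for every $M$-definable open neighbourhood $V$ of the identity and every standard $h\in SL(2,\Q)$ we have $t^{h}\in K(\bar M)^{0}\subseteq V(\bar M)$ by Lemma 2.14, so the formula $x^{-1}tx\in V$ (over $M,t$) is satisfied by all of $SL(2,\Q)$, and finite satisfiability of $tp(g/t,M)$ then forces $g$ to satisfy it too. Your first attempt via a uniformly conjugation-invariant neighbourhood $W_{V}$ would not have worked --- $SL(2,\Q)$ has no small invariant neighbourhoods (conjugating a unipotent $\begin{bmatrix}1&x\\0&1\end{bmatrix}$ by $\mathrm{diag}(a,a^{-1})$ scales $x$ by $a^{-2}$, so $\bigcap_{h}hVh^{-1}$ meets the unipotent radical only in the identity) --- but you discarded that route, so no harm done.

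The second assertion is where there is a genuine gap. Your proposed decomposition $tp(t^{g}/M)=p*tp(t/M)*p'$ does not exist: $g$ is an arbitrary element of $G(\bar M)$, not of $K(\bar M)$, so the putative factors $tp(g^{-1}/M)$, $tp(g/M)$ do not lie in $S_{K}(M)$, where ${\cal I}$ is a two-sided ideal (Lemma~\ref{q_0t=tq_0} concerns $(S_{K}(M),*)$ only); and in any case $tp(g^{-1}tg/M)$ cannot be written as such a $*$-product, since that would require $tp(g/M,g^{-1},t)$ to be an heir of $tp(g/M)$, which fails badly as $g\in\dcl(g^{-1})$. Your appeal to ``genericity is preserved under conjugation by elements normalizing $K$'' also does not apply, as you yourself note that a general $h\in SL(2,\Q)$ does not normalize $K$. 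The paper's route is the one you half-state but do not use: finite satisfiability of $tp(g/t,M)$ places $tp(t^{g}/M)$ in the closure of $\{tp(t^{h}/M):h\in SL(2,\Q)\}$, each of these types is generic, and the set of generic types in $S_{K}(M)$ is closed. (The remaining point, that $tp(t^{h}/M)$ is generic for standard $h$, deserves a word: $t$ lies in $K(\bar M)^{0}\subseteq K\cap K^{h^{-1}}$, which is an open, hence finite-index, subgroup of $K$; a generic type of $K$ concentrating on a finite-index definable subgroup is generic in that subgroup, conjugation by $h$ carries $K\cap K^{h^{-1}}$ definably isomorphically onto $K^{h}\cap K$, and a generic type of a finite-index subgroup of $K$ is generic in $K$.)
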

\begin{proof}  The first sentence is fairly immediate from Lemma 2.14: if by way of contradiction $t^{g}\notin V(\bar M)$ for some open semialgebraic neighbourhood of the identity of $K(M)$, then there is $g_{1}\in G(M)$ such that $t^{g_{1}}\notin V(\bar M)$, contradicting Lemma 2.14.

The second sentence follows from the fact that the set of generic types in $S_{K}(M)$ is closed.

\end{proof}

\section{$SL_{2}(\Q)$}
We use the above material to describe the minimal definable universal subflow of $SL(2, \Q)$ as well as its Ellis group.  We first identify the minimal subflow, see Theorem 3.4 below. 

\subsection{Minimal subflow of $(G(M), S_{G}(M))$}

The  Iwasawa decomposition of $SL(2,\Q)$ is  $B(\Q)\cdot SL(2,\Z)$, namely every element of $SL(2,\Q)$ can be written as a product $ht$ with $h\in B(\Q)$ and $t\in SL(2,\Z)$  (and also as a product $t_{1}h_{1}$ with $t_{1}\in SL(2,\Z)$ and $h_{1}\in B(\Q)$).  However, in contradistinction to the Iwasawa decomposition for real Lie groups, there is a large intersection of the constituents; namely $B(\Q)\cap SL(2,\Z) = B(\Z) = $

\[\left\{\left[\begin{array}{cc}                                                                                        a& c\\0&a^{-1}                                                                                       \end{array}\right]|a\in \Z^* \text{~and~} c\in\Z\right\}.\]

\vspace{2mm}
\noindent
We recall the notation from the previous sections: $K(M) = SL(2, \Z)$ is the maximal compact subgroup of  $SL(2,\Q)$, and $\cal I$ is the unique minimal subflow of the flow $(K(M), S_{K}(M))$. We fix a generic type $q_{0}\in S_{K}(M)$ which concentrates on $K^{0}$.  
$p_{0}$ is the restriction to $M$ of the global $f$-generic type $\bar{p_{0}}$ of $B(\K)$, and $\cal J$ is the minimal subflow of $(B(M), S_{B}(M))$ containing $p_{0}$, as in subsection 2.2. 

\begin{Lemma}  ${\cal I}*{\cal J} \subseteq S_{G}(M)*q_{0}*p_{0}$. 
\end{Lemma}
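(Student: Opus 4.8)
The plan is to show that an arbitrary element of $\mathcal{I}*\mathcal{J}$ can be rewritten in the form $r*q_0*p_0$ for some $r\in S_G(M)$. Recall $\mathcal{I}$ is the minimal $K$-subflow of $S_K(M)$, $q_0\in\mathcal{I}$ is a fixed generic of $K$ concentrating on $K^0$, and $\mathcal{J}$ is the $B(M)$-orbit closure of $p_0$ inside $S_B(M)$. Since $\mathcal{I}=S_K(M)*q_0$ is a left ideal (indeed $\mathcal{I}$ is the unique minimal $K$-subflow, so $K(M)*q_0$ is dense in it and $S_K(M)*q_0$, being closed and contained in $\mathcal{I}$, equals it), any $q\in\mathcal{I}$ can be written as $q=r_1*q_0$ with $r_1\in S_K(M)\subseteq S_G(M)$. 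Similarly, since $\mathcal{J}$ is a minimal $B(M)$-subflow containing $p_0$, any $p\in\mathcal{J}$ lies in the closure of the $B(M)$-orbit of $p_0$; moreover by Corollary 2.11(iii), $(\mathcal{J},*)$ is a group with identity $p_0$, so $p=p*p_0$ for every $p\in\mathcal{J}$. Hence $q*p=(r_1*q_0)*(p*p_0)=r_1*q_0*p*p_0$, and the task reduces to absorbing the middle factor $p$ into something of the form $q_0*p_0$ modulo a left factor.

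The key step is therefore to understand $q_0*p$ for $p\in\mathcal{J}$, i.e.\ for $p$ a $B(M)$-translate (or limit of such) of $p_0$. First I would handle $p=h\cdot p_0$ for $h\in B(M)$: realize $q_0$ by $t\in K(\bar M)$ with $\mathrm{tp}(t/M)$ generic concentrating on $K^0$, and realize the heir of $h\cdot p_0$ over $(M,t)$; the point is that $q_0*(h\cdot p_0)=h'\cdot(q_0^{h^{-1}}*p_0)$ or some conjugation identity of this shape, and here Corollary 2.15 is exactly what is needed: since $q_0$ concentrates on $K^0$ and $K(\bar M)^0$ is normalized by all of $G(M)$ (Lemma 2.14), conjugating the realizing element $t\in K(\bar M)^0$ by $h\in B(M)\subseteq G(M)$ keeps it in $K(\bar M)^0$, and if $\mathrm{tp}(t/M)$ is generic so is $\mathrm{tp}(t^{h}/M)$. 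This lets one commute $q_0$ past $B(M)$-elements up to replacing $q_0$ by another generic $q_0'\in\mathcal{I}$ concentrating on $K^0$, and then write $q_0'=s*q_0$ with $s\in S_K(M)$ using that $\mathcal{I}$ is a minimal left ideal. Assembling: $q_0*(h\cdot p_0)\in S_G(M)*q_0*p_0$.

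To finish, I would pass from $h\cdot p_0$ ($h\in B(M)$) to general $p\in\mathcal{J}$ by a continuity/closure argument: the set $\{p\in S_B(M): q_0*p\in S_G(M)*q_0*p_0\}$ — more precisely, the set of $p$ for which $q_0*p$ lies in the closed set $\overline{S_G(M)*q_0*p_0}=S_G(M)*q_0*p_0$ (closed since it is a continuous image of the compact $S_G(M)$ under $r\mapsto r*q_0*p_0$, the map $r\mapsto r*x$ being continuous) — is closed and contains the $B(M)$-orbit of $p_0$, hence contains its closure $\mathcal{J}$. Therefore $q*p=r_1*(q_0*p)*p_0\in S_G(M)*(S_G(M)*q_0*p_0)*p_0\subseteq S_G(M)*q_0*p_0$, using associativity of $*$ and that $p_0$ is idempotent.

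\medskip\noindent
The main obstacle I anticipate is making precise the "commuting $q_0$ past $B(M)$" step: one must be careful that $q_0*(h\cdot p_0)$, computed via heirs over the model generated by a realization of $q_0$, really does equal (the type of) a product whose $K$-part is a $G(M)$-conjugate of a realization of $q_0$ still lying in $K^0$ — this is where the genericity hypothesis on $q_0$, its concentration on $K^0$, and the finite-satisfiability conditions built into Corollary 2.15 all have to be lined up correctly. Everything else is formal manipulation with the semigroup $(S_G(M),*)$, minimal ideals, and closures.
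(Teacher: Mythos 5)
Your overall strategy---reduce to understanding $q_{0}*p$ for $p\in{\cal J}$, then use the conjugation identity together with Corollary 2.15 to push a realization of a generic type of $K$ concentrating on $K^{0}$ past elements of $B$---is exactly the right circle of ideas, and your treatment of the orbit points $p=h\cdot p_{0}$ with $h\in B(M)$ can be made to work. The problem is the final ``continuity/closure'' step. The semigroup operation $*$ on $S_{G}(M)$ is continuous only in the \emph{left-hand} argument: for fixed $x$ the map $r\mapsto r*x$ is continuous (this is what you correctly use to see that $S_{G}(M)*q_{0}*p_{0}$ is closed), but for fixed $r$ the map $x\mapsto r*x$ is in general \emph{not} continuous, since $\phi\in r*x$ holds iff the $x$-definition of $\phi(z\cdot y)$ lies in $r$, and the defining schema of $x$ does not vary continuously with $x$. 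Hence there is no reason for the set $\{p\in S_{B}(M):q_{0}*p\in S_{G}(M)*q_{0}*p_{0}\}$ to be closed, and you cannot pass from the $B(M)$-orbit of $p_{0}$ to its closure ${\cal J}$ this way. The gap is not cosmetic: ${\cal J}\cong B(\K)/B(\K)^{0}\cong\hat\z$ is uncountable while the $B(M)$-orbit of $p_{0}$ is countable, so almost all points of ${\cal J}$ are reached only in the limit.

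The paper avoids taking any limit. Given $q_{1}\in{\cal I}$ and $p_{1}\in{\cal J}$, it uses minimality of ${\cal J}$ as a $B(M)$-flow (equivalently ${\cal J}=S_{B}(M)*p_{0}$) to write $p_{1}=p'*p_{0}$ with $p'\in S_{B}(M)$ an arbitrary \emph{type}, not just a realized element, sets $s=q_{1}*p'$, and verifies $s*q_{0}*p_{0}=q_{1}*p_{1}$ in one computation: taking $t_{0}\models q_{1}$, $h$ realizing the heir of $p'$ over $(M,t_{0})$, $t$ realizing the heir of $q_{0}$ over $(M,t_{0},h)$ and $h_{0}$ realizing the heir of $p_{0}$ over $(M,t_{0},h,t)$, one has $t_{0}hth_{0}=t_{0}t^{h}hh_{0}$, where $t^{h}\in K^{0}$ by Corollary 2.15 (so $\tp(t_{0}t^{h}/M)=q_{1}$, since $t_{0}$ realizes the unique coheir of the generic $q_{1}$ over $(M,t^{h})$) and $\tp(hh_{0}/M)=p'*p_{0}=p_{1}$ by Lemma 2.10. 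If you replace your closure argument by this factorization of $p_{1}$ over all of $S_{B}(M)$, the rest of your proof goes through.
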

\begin{proof}  We have to show that for any $q_{1}\in \cal I$ and $p_{1}\in \cal J$, there is $s\in S_{G}(M)$ such that $s*q_{0}*p_{0} = q_{1}*p_{1}$.  

Let $p'\in S_{B}(M)$ be such that $p'*p_{0} =p_{1}$.  (Because $p_{0},p_{1}\in \cal J$ which is a minimal subflow of $S_{B}(M)$ so of the form $S_{B}(M)*p_{0}$.)   Now let $s = q_{1}*p'$. 

Then $$s*q_{0}*p_{0} = q_{1}*p'*q_{0}*p_{0}  = tp(t_{0}hth_{0}/M) = tp(t_{0}t^{h}hh_{0}/M)$$

where $t_{0}$ realizes $q_{1}$, $h$ realizes the (unique) heir of $p'$ over $M,t_{0}$, $t$ realizes the unique heir of $q_{0}$ over $M,t_{0},h$ and $h_{0}$ realizes the unique heir of $p_{0}$ over $M,t_{0},h,t$. We may assume that $t_{0}, h, t$ are in $SL(2, \K)$ and that $h_{0}$ realises the unique heir of $p_{0}$ over ${\bar M}$. 

By Lemma 2.10, $tp(hh_{0}/{\bar M})$ is definable over $M$, and note that $tp(hh_{0}/M) = p'*p_{0} = p_{1}$.

On the other hand, by Corollary 2.15, $t^{h}\in K^{0}$. As $q_{1}\in {\cal I}$ and $t_{0}$ realizes the unique coheir of $q_{1}$ over $M,t^{h}$, we have that $tp(t_{0}t^{h}/M) = q_{1}$.

Hence $tp(t_{0}t^{h}hh_{0}/M) = q_{1}*p_{1}$ as required.

\end{proof} 

\begin{Lemma}  $S_G(M)*q_{0}*p_{0} = cl({\cal I}*{\cal J})$
\end{Lemma}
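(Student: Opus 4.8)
**Proof plan for Lemma 3.3 ($S_G(M)*q_0*p_0 = \cl({\cal I}*{\cal J})$).**

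The plan is to prove the two inclusions separately. For the inclusion $S_G(M)*q_0*p_0 \subseteq \cl({\cal I}*{\cal J})$, I would first observe that $q_0*p_0 \in {\cal I}*{\cal J}$, since $q_0 \in {\cal I}$ and $p_0 \in {\cal J}$. Then I would argue that $S_G(M)*q_0*p_0$ is contained in the closure of the $G(M)$-orbit of $q_0*p_0$: indeed, for $s \in S_G(M)$, the type $s*q_0*p_0$ lies in the closure of $\{g \cdot (q_0*p_0) : g \in G(M)\}$ because $*$ is continuous in the left variable when the right variable is fixed, and $s$ is in the closure of the principal ultrafilters $g \in G(M)$. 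So it suffices to show that the $G(M)$-orbit of $q_0*p_0$ lies in $\cl({\cal I}*{\cal J})$. For this, take $g \in G(M)$ and use the Iwasawa decomposition $g = t_1 h_1$ with $t_1 \in K(M)=SL(2,\Z)$ and $h_1 \in B(M)$. Then $g\cdot(q_0*p_0) = \tp(t_1 h_1 \cdot t h / M)$ where $t$ realizes $q_0$ and $h$ the heir of $p_0$; rewriting $h_1 t = t^{h_1^{-1}} h_1$ and applying Corollary 2.15 (to get $t^{h_1^{-1}}\in K^0$ and still generic — here one needs $\tp(h_1^{-1}/t,M)$, equivalently $h_1 \in G(M)$, which is automatic as $h_1$ is a standard element), one sees $g\cdot(q_0*p_0) = \tp(t_1 t^{h_1^{-1}} \cdot h_1 h / M)$. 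Now $t_1 t^{h_1^{-1}}$ realizes a type in ${\cal I}$ (by Lemma 2.13(1), since $\cal I$ is a two-sided ideal, or directly since $\tp(t^{h_1^{-1}}/M)\in{\cal I}$ and $t_1 \in K(M)$), and $h_1 h$ realizes a type in ${\cal J}$ (as ${\cal J}$ is $B(M)$-invariant and $p_0 \in {\cal J}$). Hence $g\cdot(q_0*p_0) \in {\cal I}*{\cal J}$, giving the first inclusion.

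For the reverse inclusion $\cl({\cal I}*{\cal J}) \subseteq S_G(M)*q_0*p_0$, I would use Lemma 3.2, which gives ${\cal I}*{\cal J} \subseteq S_G(M)*q_0*p_0$. It then remains to check that $S_G(M)*q_0*p_0$ is closed. This follows because $S_G(M)$ is compact, the map $s \mapsto s*q_0*p_0$ is continuous (left-continuity of $*$), and the continuous image of a compact set in a Hausdorff space is closed; so $S_G(M)*q_0*p_0 = \cl(S_G(M)*q_0*p_0) \supseteq \cl({\cal I}*{\cal J})$. Combining the two inclusions yields the equality.

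The main obstacle I anticipate is the bookkeeping in the first inclusion: one must set up the realizing elements $t_0$ (realizing $q_0$), $h_1$, etc. in the right order of saturation/heredity so that the rewriting $h_1 t = t^{h_1^{-1}} h_1$ is compatible with the definitions of $*$ via heirs, and so that Corollary 2.15 genuinely applies (it requires the finite-satisfiability hypothesis $\tp(g/t,M)$ finitely satisfiable in $M$, which is fine here since $h_1 \in G(M)$ is standard, but one must phrase it carefully). A cleaner route, which I would prefer, is to avoid the orbit computation entirely: since $\cl({\cal I}*{\cal J})$ is by Lemma 3.2 contained in the closed set $S_G(M)*q_0*p_0$, and conversely $S_G(M)*q_0*p_0 \subseteq \cl(G(M)\cdot(q_0*p_0)) \subseteq \cl({\cal I}*{\cal J})$ provided $G(M)\cdot(q_0*p_0)\subseteq {\cal I}*{\cal J}$, the whole lemma reduces to the single orbit containment $g\cdot(q_0*p_0)\in {\cal I}*{\cal J}$ for $g\in G(M)$, which is the Iwasawa-plus-Corollary-2.15 argument above. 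So the essential content is exactly that one computation, and everything else is soft topological/semigroup formalism.
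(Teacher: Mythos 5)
Your proposal is correct and follows essentially the same route as the paper: the inclusion $cl({\cal I}*{\cal J})\subseteq S_G(M)*q_0*p_0$ comes from the previous lemma plus closedness of $S_G(M)*q_0*p_0$, and the reverse inclusion is obtained by writing $g=th$ via the Iwasawa decomposition, moving the $K$-part past the $B$-part by conjugation, and invoking Corollary 2.15 together with the heir bookkeeping you flag (the paper checks exactly that $\tp(hh_0/M,tt_0^{h})$ is an heir of its restriction to $M$). The only quibble is that with the paper's convention $x^y=yxy^{-1}$ the rewriting should read $h_1t=t^{h_1}h_1$, not $t^{h_1^{-1}}h_1$, which is immaterial to the argument.
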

\begin{proof}
 The previous lemma together with the fact that $S_{G}(M)*q_{0}*p_{0}$ is closed shows that the RHS is contained in the LHS.

For the converse,  we will show that the $G(M)$ orbit of $q_{0}*p_{0}$ is contained in ${\cal I}*{\cal J}$ which suffices, by taking closures, to see that the LHS is contained in the RHS.

So let $g\in G(M)$ and write $g = th$ with $t\in K(M) = SL_{2}(\Z)$ and $h\in B(M) = B(\Q)$.  Then

$$ (th)(q_{0}*p_{0}) = tq_{0}^{h}*hp_{0} = tp(tt_{0}^{h}hh_{0}/M)$$

where $t_{0}$ realizes $q_{0}$ and $h_{0}$ realizes the unique heir of $p_{0}$ over $M,t_{0}$. By the choice of $\cal J$,  $tp(hh_{0}/M)\in {\cal J}$.  Clearly (or by 2.15), $tp(t_{0}^{h}/M)\in {\cal I}$, as is $tp(tt_{0}^{h}/M)$.  Now as $tp(h_{0}/M,t_{0})$ is an heir of its restriction to $M$, also $tp(hh_{0}/M,tt_{0}^{h})$ is an heir of its restriction to $M$, so $tp(tt_{0}^{h}hh_{0}/M) \in {\cal I}*{\cal J}$, so by the displayed equation above 
\newline
$g(q_{0}*p_{0})\in {\cal I}*{\cal J}$, as required.

\end{proof}

\begin{Lemma} $S_{G}(M)*q_{0}*p_{0} \subseteq S_{K}(M)*{\cal J}$. Namely every $s*q_{0}*p_{0}$ (with $s\in S_{G}(M)$)  is of the form $r*p$ with $r\in S_{K}(M)$ and $p\in {\cal J}$.

\end{Lemma}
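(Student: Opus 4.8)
The plan is to combine the Iwasawa decomposition of $SL(2,\Q)$ at the level of realizations with the fact, established in Lemma 3.2, that $S_G(M)*q_0*p_0 = cl({\cal I}*{\cal J})$, and then argue that every element of $cl({\cal I}*{\cal J})$ already has the desired form $r*p$ with $r\in S_K(M)$ and $p\in{\cal J}$. First I would take an arbitrary $s\in S_G(M)$ and look at a realization: let $g$ realize the unique heir of some appropriate type over a suitable small model, with $g\in G(\bar M)=SL(2,\K)$, and write $g = t_1 h_1$ with $t_1\in SL(2,\K\cap\text{(valuation ring)})$ — but since realizations of types in $S_G(M)$ need not lie in the compact part, the cleaner route is to work directly with the description $s*q_0*p_0 = tp(gt_0h_0/M)$ where $t_0$ realizes the heir of $q_0$ over $(M,g)$ and $h_0$ realizes the heir of $p_0$ over $(M,g,t_0)$, with all of $t_0,h_0$ taken over $\bar M$ after moving $g$ into $SL(2,\K)$.

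The key step is then to push the Iwasawa decomposition through: write $g = \tau\eta$ with $\tau\in SL(2,\V)=K(\bar M)$ and $\eta\in B(\K)$ (this is the Iwasawa decomposition over the saturated field $\K$, which holds since $\Q\prec\K$ and the decomposition $SL(2,-) = SL(2,\text{int})\cdot B(-)$ is first-order expressible, or simply because $SL(2,\V)$ is the maximal compact and $B(\K)\cdot SL(2,\V) = SL(2,\K)$). Then $gt_0h_0 = \tau\eta t_0 h_0 = \tau\,(\eta t_0\eta^{-1})\,\eta h_0 = \tau\, t_0^{\eta^{-1}}\,\eta h_0$. Now I would argue that $t_0^{\eta^{-1}}\in K(\bar M)^0$: since $t_0$ realizes the heir of $q_0$ (which concentrates on $K^0$) over a model containing the parameters needed to name $\eta$, we have $t_0\in K(\bar M)^0$, and conjugation by $\eta^{-1}$ — using that $tp(t_0/\eta, M)$ is appropriately free (a coheir/heir configuration) and Corollary 2.15 together with the conjugation-invariance of $K(\bar M)^0$ from Lemma 2.14, extended from $G(M)$-conjugation to conjugation by a realization of a finitely-satisfiable type — keeps it in $K(\bar M)^0\subseteq SL(2,\V) = K(\bar M)$. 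Hence $\tau\, t_0^{\eta^{-1}}\in K(\bar M)$, call its type over $M$ the type $r\in S_K(M)$. Meanwhile $tp(\eta h_0/M)$: since $\eta\in B(\K)$ and $h_0$ realizes the heir of $p_0\in{\cal J}$ over a model containing $\eta$, and $\bar{\cal J}$ is $B(\K)$-invariant (Corollary 2.11), $tp(\eta h_0/M)\in{\cal J}$; call it $p$. Finally, checking that the heir/coheir configuration is arranged so that $tp(\tau t_0^{\eta^{-1}}\cdot\eta h_0/M) = r*p$ — i.e. that $\eta h_0$ realizes the heir of $p$ over $M$ together with $\tau t_0^{\eta^{-1}}$ — gives $s*q_0*p_0 = r*p\in S_K(M)*{\cal J}$.

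The main obstacle I anticipate is the bookkeeping of independence/heir-coheir relations among $g$ (equivalently $\tau,\eta$), $t_0$, and $h_0$, so that (a) the conjugate $t_0^{\eta^{-1}}$ genuinely stays in $K(\bar M)^0$ — this needs that $tp(t_0$ or its conjugating partner over the relevant model is finitely satisfiable in $M$, to invoke Corollary 2.15 — and (b) the final product's type over $M$ is literally $r*p$ in the semigroup sense, which requires $\eta h_0$ to be an heir over the left factor. One must be careful that $\eta$ is not over $M$ (it comes from $g\in G(\bar M)$), so Lemma 2.14's conjugation-invariance of $K(\bar M)^0$ must be applied in the form packaged by Corollary 2.15, namely for conjugation by an element realizing a type finitely satisfiable in $M$; arranging the realization $g$ (hence $\eta$) to have this property — which is automatic when we pick realizations in the standard "$*$"-recipe order, since the heir/coheir asymmetry can be set up so that the $B$-part $\eta$ of $g$ sits over $M$ with the right factor being finitely satisfiable — is the delicate point. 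Once the configuration is fixed correctly, the computation $gt_0h_0 = (\tau t_0^{\eta^{-1}})(\eta h_0)$ does all the work.
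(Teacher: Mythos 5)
Your proposal is correct and follows essentially the same route as the paper: realize $s$ as $tp(\tau\eta/M)$ via the Iwasawa decomposition over $\K$, rewrite $\tau\eta t_0 h_0$ as $\tau(\eta t_0\eta^{-1})\cdot \eta h_0$, use Corollary 2.15 (the hypothesis of which holds because $t_0$ is an heir of $q_0$ over $(M,\tau,\eta)$, so $tp(\eta/M,t_0)$ is finitely satisfiable in $M$) to keep the conjugate in $K(\bar M)^0\subseteq K(\bar M)$, and use Lemma 2.10 and Corollary 2.11 to see that $tp(\eta h_0/M)\in{\cal J}$ and that $\eta h_0$ realizes the heir of its type over the left factor. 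The only cosmetic discrepancy is your conjugation convention ($x^y=y^{-1}xy$ rather than the paper's $yxy^{-1}$); the underlying computation is identical.
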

\begin{proof} Let $s = tp(th/M)$ where $t\in K(\bar M) = SL(2, \V)$ and $h\in B(\bar M) = B(\K)$.  Then 

$$ s*q_{0}*p_{0} = tp(tht_{0}h_{0}/M) = tp(tt_{0}^{h}hh_{0}/M)$$

where  $t_{0}$ realizes the unique heir of $q_{0}$ over $(M,t,h)$ and $h_{0}$ realizes the unique heir of $p_{0}$ over $\bar M$, namely $\bar{p_{0}}$. 
Again, $tp(hh_{0}/{\bar M})$ is definable over $M$ (by Lemma 2.10), and  $tp(hh_{0}/M)\in {\cal J}$ (by Corollary 2.11). Moreover by 2.15, $t_{0}^{h}\in K$ and so also $tt_{0}^{h}\in K$. Thus
$tp(tt_{0}^{h}hh_{0}/M)\in S_{K}(M)*{\cal J}$ as required. 

\end{proof}

\begin{Theorem} (i) $cl({\cal I}*{\cal  J})$ is a minimal subflow of the flow $(G(M), S_{G}(M))$.  
\newline
(ii) Moreover $q_{0}*p_{0}$ is an idempotent in this minimal flow. 
\end{Theorem}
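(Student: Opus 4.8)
The plan is to combine the three preceding lemmas (3.1, 3.2, 3.3) to pin down $S_G(M)*q_0*p_0$ and then use the $fsg$/$f$-generic machinery from Section 2 to show it is minimal. First I would record what is already in hand: by Lemma 3.2 we have $S_G(M)*q_0*p_0 = cl({\cal I}*{\cal J})$, so part (i) is equivalent to showing that the closed left ideal $cl({\cal I}*{\cal J})$ of $(S_G(M),*)$ is minimal, i.e.\ that $S_G(M)*r = cl({\cal I}*{\cal J})$ for every $r \in cl({\cal I}*{\cal J})$. Since $cl({\cal I}*{\cal J}) = S_G(M)*q_0*p_0$ it suffices to show that $q_0*p_0 \in S_G(M)*r$ for each such $r$; equivalently, that $q_0*p_0$ lies in every closed left ideal contained in $cl({\cal I}*{\cal J})$.

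The key step is to exploit Lemma 3.3, which tells us $S_G(M)*q_0*p_0 \subseteq S_K(M)*{\cal J}$; together with the reverse-type inclusions one checks directly ($S_K(M)*{\cal J} \subseteq S_G(M)*q_0*p_0$ via $r*p = r*(s*p_0)$ using minimality of ${\cal J}$ in $S_B(M)$, and $r \in S_K(M) \subseteq S_G(M)$, plus the fact that ${\cal I}*{\cal J}$ is cofinal), one gets $cl({\cal I}*{\cal J}) = S_K(M)*{\cal J} = cl(S_K(M)*{\cal J})$. Now given a minimal left ideal $\cal M \subseteq cl({\cal I}*{\cal J})$, pick $r = r'*p \in \cal M$ with $r' \in S_K(M)$, $p \in \cal J$. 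Multiplying on the left by a generic $q_1 \in \cal I$: since $\cal I$ is a two-sided ideal of $S_K(M)$ (Lemma 2.13(1)), $q_1 * r' \in \cal I$, so $q_1*r \in {\cal I}*{\cal J} \subseteq \cal M$. Thus $\cal M$ meets ${\cal I}*{\cal J}$; say $q_1*p_1 \in \cal M$ with $q_1 \in \cal I$, $p_1 \in \cal J$. Then left-multiply by $q_0$: using Lemma 2.13(1) again, $q_0*q_1 \in \cal I$, and one analyses $q_0*q_1*p_1$ by the same $t^h$-conjugation computation as in Lemma 3.1 — realizing $q_1$ by $t_1$, the heir of $p_1$ by $h_1$, the heir of $q_0$ by $t$ over $M,t_1,h_1$ — to rewrite it as $tp(t t_1^{h_1} h_1 \cdots /M)$ and show it equals $q_0 * (q_1*p_1)$ with the $\cal J$-part still $p_1$-translated to something in $\cal J$. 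Pushing $p_1$ back to $p_0$ by left-multiplying by the appropriate element of $S_B(M)$ (minimality of $\cal J$) and absorbing it, one produces $q_0*p_0$, or rather shows $q_0*p_0 \in S_G(M)*(q_1*p_1) \subseteq \cal M$. Hence $\cal M = S_G(M)*q_0*p_0 = cl({\cal I}*{\cal J})$, proving (i).

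For part (ii), I would compute $(q_0*p_0)*(q_0*p_0)$ directly. Realize the first $q_0$ by $t_0 \in K(\bar M)^0$ generic, then successively take heirs: $h \models p_0$-heir over $M,t_0$, then $t \models q_0$-heir over $M,t_0,h$ (so $t \in K(\bar M)^0$), then $h' \models \bar{p_0}$ over $M,t_0,h,t$. So $(q_0*p_0)*(q_0*p_0) = tp(t_0 h t h'/M) = tp(t_0 t^h h h'/M)$. By Corollary 2.15, $t^h \in K(\bar M)^0$, and since $t_0$ realizes (the appropriate heir/coheir of) a generic of $K$ and the set of generics is a closed left ideal, $tp(t_0 t^h/M) = q_0$ — indeed $q_0 * q_0 = q_0$ since $q_0$ is an idempotent (it is the chosen idempotent of $\cal I$ concentrating on $K^0$; this is where I use that $q_0$ was chosen to be an idempotent generic). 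Meanwhile $tp(h h'/\bar M)$ is definable over $M$ by Lemma 2.10 and $tp(h h'/M) = p_0 * p_0 = p_0$, since $p_0$ is idempotent by Corollary 2.11(iii). The heir relations between the blocks $t_0 t^h$ and $h h'$ carry over from the construction just as in Lemma 3.1, so $tp(t_0 t^h h h'/M) = q_0 * p_0$. Hence $q_0*p_0$ is idempotent.

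\textbf{Main obstacle.} The delicate point throughout is bookkeeping of the heir/coheir conditions so that the identity $p*q = tp(gh/M)$ with $h$ realizing the heir of $q$ over $(M,g)$ is legitimately applied after each rewriting $th = t^h h$; in particular, in (ii) one must check that after the conjugation move the pair $(t_0 t^h, h h')$ still has the "right" relative position ($hh'$ an heir over $M, t_0 t^h$) to conclude the product is $q_0*p_0$ rather than something larger. This is exactly the type of argument carried out in Lemmas 3.1–3.3, so the obstacle is one of care rather than of a genuinely new idea; I expect it to go through by the same reasoning, using that $tp(\alpha/\bar M,\beta)$ was chosen finitely satisfiable in $\bar M$ in the definition of $\bar{p_0}$.
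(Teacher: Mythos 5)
Your part (ii) is essentially the paper's own computation and is fine. For part (i) the overall architecture is sound and close to the paper's: reduce to showing that $q_0*p_0$ lies in an arbitrary minimal left ideal ${\cal M}\subseteq cl({\cal I}*{\cal J})$, use Lemmas 3.2 and 3.3 to write a point of ${\cal M}$ as $r'*p$ with $r'\in S_K(M)$, $p\in{\cal J}$, and use Lemma 2.13(1) to push it into ${\cal I}*{\cal J}$. One aside: your claimed reverse inclusion $S_K(M)*{\cal J}\subseteq S_G(M)*q_0*p_0$ is false. Taking $r$ to be the realized type of the identity gives $r*p_0=p_0$, which concentrates on $B$, whereas every type in $cl({\cal I}*{\cal J})$ implies $x\notin B$ (a generic type of the $fsg$ group $SL(2,\V)$ cannot concentrate on the non-generic set $B(\V)$, so each $q*p$ with $q\in{\cal I}$ implies $x\notin B$, a clopen condition that passes to the closure). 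Your argument only yields $S_K(M)*{\cal J}\subseteq S_G(M)*p_0$. Fortunately only the forward inclusion, which is Lemma 3.3, is used downstream.

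The genuine gap is in the final step, showing $q_0*p_0\in S_G(M)*(q_1*p_1)$. The $t^h$-conjugation trick of Lemma 3.1 works only because the element being conjugated there realizes an heir of $q_0$, which concentrates on $K^0$: Corollary 2.15 applies only to elements of $K(\bar M)^0$, and conjugating a general element of $SL(2,\V)$ by an element of $B(\K)$ whose entries have valuation below $\z$ does not even stay inside $SL(2,\V)$. In your setup the element to be conjugated realizes the arbitrary generic $q_1\in{\cal I}$ (or $q_0*q_1$), which need not concentrate on $K^0$, so the computation as sketched breaks down. The missing idea is precisely the paper's step (2): choose $q_2\in{\cal I}$ with $q_2*q_1$ concentrating on $K^0$, choose $p'\in S_B(M)$ with $p'*p_1=p_0$ (minimality of ${\cal J}$), and compute $q_0*p'*q_2*(q_1*p_1)$, with the $B$-factor interposed \emph{before} the $K^0$-factor, so that after rewriting the conjugate lands in $K^0=Stab$ of the global coheir of $q_0$ and the $B$-parts collect into the heir of $p_0$ over the $K$-part. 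Your ``left-multiply by $q_0$, then push $p_1$ back to $p_0$ by left-multiplying by an element of $S_B(M)$'' has these insertions in the wrong order (an $S_B(M)$-factor applied on the outside of $q_0*q_1*p_1$ does not act on the ${\cal J}$-coordinate) and omits the $K^0$-normalization; so the obstacle here is not mere bookkeeping but two specific choices that your sketch does not make.
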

\begin{proof} (i). By Lemma 3.2,  $cl({\cal I}*{\cal  J})$ is a $G(M)$-flow. As any point in $cl({\cal I}*{\cal J})$ is of the form $s*q_{0}*p_{0}$ by Lemma 3.2, and the closure of the $G(M)$-orbit of this $s*q_{0}*p_{0}$ is precisely 
$S_{G}(M)*s*q_{0}*p_{0}$, it suffices to prove:
\newline
{\em Claim.}  For any $s\in S_{G}(M)$, ${\cal I}*{\cal J} \subseteq S_{G}(M)*s*q_{0}*p_{0}$.
\newline
{\em Proof of claim.} 
Fix $s\in S_{G}(M)$. By the previous lemma, let $q'\in S_{K}(M)$, and $p_{1}\in {\cal J}$ be such that 
\newline
(1)   $s*q_{0}*p_{0} = q'*p_{1}$, and note that by Lemma 2.10 and Corollary 2.11 (ii) the unique global heir of $p_{1}$ is a strong $f$-generic of $B$ every translate of which is definable over $M$.  
\newline
We can easily find $q_{1}\in {\cal I}$ such that 
\newline 
(2) $q_{1}*q'\in K^{0}$ (in the obvious sense that some/any realization is in $K^{0}$). 

\vspace{2mm}
\noindent
Now, let $q\in {\cal I}$ and $p\in {\cal J}$ and we want to show that $q*p \in S_{G}(M)*s*q_{0}*p_{0}$. 
\newline
Let $p'\in S_{B}(M)$ be such that:
\newline 
(3) $p'*p_{1} = p$, where $p_{1}$ is as in (1). 

\vspace{2mm}
\noindent
Now we compute $q*p'*q_{1}*q'*p_{1}$. Let $a$ realize $q$, $b$ realize the unique heir of $p'$ over $(M,a)$, $c$ realize the unique heir of $q_{1}*q'$ over $(M,a,b)$ and $d$ realize the unique heir of $p_{1}$ over $\bar M$ (so in particular over $(M,a,b,c)$). 
Then 
\newline
 (4)      $q*p'*q_{1}*q'*p_{1} = tp(abcd/M) = tp(ac^{b}bd/M)$

\vspace{2mm}
\noindent
Now by the property of $p_{1}$ in (1), $tp(bd/{\bar M})$ is definable over $M$. In particular (using (3))  $bd$ realizes the unique heir of $p$ over  $(M,ac^{b})$.   On the other hand, by 2.15 and (2), $c^{b}\in K^{0}({\bar M})$. As $tp(c/M,a,b)$ is definable over $M$, and $tp(b/M,a)$ is definable over $M$, $tp(a/M,b,c)$ is finitely satisfiable in $M$ (and moreover realizes the {\em unique} coheir over $(M,b,c)$ of $q$, as all types over $M$ have unique heirs).  As the stabilizer (inside $K$) of the global coheir of $q$ is $K^{0}$, it follows that $tp(ac^{b}/M) = q$. So we conclude that
\newline
(5) $tp(ac^{b}bd/M) = tp(ac^{b}/M)*tp(bd/M) = q*p$. 
\newline
By (4) and (5) $q*p = r_{1}*(q'*p_{1})$   where $r_{1} = q*p'*q_{1}\in S_{G}(M)$. So by (1) $q*p = r_{1}*s*q_{0}*p_{0}$ giving the claim. 
\newline
{\em End of Proof of claim.} 
\newline
This finishes the proof of  (i).

\vspace{2mm}
\noindent
(ii) is an easy computation, bearing in mind the techniques above, which we carry out below. 

We want to show that
$$ q_{0}*p_{0}*q_{0}*p_{0} = q_{0}*p_{0}$$

The left hand side is $tp(tht_{0}h_{0}/M)$, where $t$ and $t_{0}$ realize $q_{0}$, $h$ and $h_{0}$ realize $p_{0}$, and $tp(t/M,h_{0}, t_{0}, h_{0})$ is the coheir of $q_{0}$ etc.  We will slightly adapt the proof of Lemma 3.3.  First rewrite this left hand side as $tp(t(t_{0}^{h})hh_{0}/M)$.  Conclude from 2.15 that $t_{0}^{h}\in K(\bar M)^{0}$. But $K({\bar M})^{0}$ is the stabilizer of the unique global coheir $\bar{q_{0}}$ of $q_{0}$, whereby $t(t_{0}^{h})$ realizes $q_{0}$.  On the other hand, we may assume that $h_{0}$ realizes the global heir $\bar{p_{0}}$ of $p_{0}$ (and that $t,h,t_{0}$ are in $\bar M$).  As the stabilizer of $\bar p_{0}$ is $B({\bar M})^{0}$ which contains $h$ it follows that $hh_{0}$ also realizes $\bar{p_{0}}$.  Putting it together we see that $tp(t(t_{0}^{h})hh_{0}/M) = q_{0}*p_{0}$, as required.

\end{proof}

Note that from Theorem 3.4  and  the discussion in subsection 1.2,  we have identified the universal definable minimal flow of  $SL(2,\Q)$.
Moreover we have shown that $q_{0}*p_{0}$ is almost periodic and idempotent.

\subsection{The Ellis group}
Let ${\cal M}$ denote the minimal $G(M)$-flow $S_{G}(M)*q_{0}*p_{0} = cl({\cal I}*{\cal J})$. The Ellis group attached to the flow $(G(M), S_{G}(M))$ is then the
group $(q_{0}*p_{0}*{\cal M}, *)$ which we aim to describe explicitly. 

Remember that the intersection of  $K(M)$ (i.e. $SL(2,\Z)$) and $B(\Q)$ is $B(\Z)$. 
  
\begin{Lemma}
Let $h$ realize $p_{0}$. Let $t\in SL(2,\Z)$. Then 
\begin{itemize}
\item if $t\in B(\Z)$, then $p_0t=t\tp(h'/M)$, for some $h'\in B(\K)^{0}\cap dcl(h,M)$
\item if $t\notin B(\Z)$, then $p_0t=\tp(t'h'/M)$, where $t'\in SL(2,\V)^{0}\cap dcl(h,M)$ and $h'\in B(\K)\cap dcl(h,M)$.
\end{itemize}
\end{Lemma}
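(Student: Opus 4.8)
The plan is to compute $p_0 t$ directly as $\tp(ht/M)$ where $h$ realizes $\bar p_0$ (we may take the realization to be over $\bar M$, since $\bar p_0$ is the unique heir of $p_0$ and right multiplication by the standard-model element $t$ will not disturb definability). Writing $h = (\alpha,\beta)$ with $\alpha$ realizing $p_{0,C_0}'$ and $\beta$ realizing $p_{\infty,C_0}'$, the product $ht$ is obtained by the matrix multiplication formula. The key structural fact I would exploit is the Iwasawa-type interaction between $B$ and $K=SL(2,\Z)$ at the level of $\Z$-points: $B(\Q)\cap SL(2,\Z) = B(\Z)$, and $SL(2,\Z)$ normalizes the congruence-type neighbourhoods of the identity.

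\emph{Case $t\in B(\Z)$.} Here $t=(u,c)$ with $u\in\Z^*$, $c\in\Z$, so $ht = (\alpha u,\ \alpha c + \beta u^{-1})$ stays inside $B(\K)$. I want to rewrite this as $t\cdot h'$ with $h'\in B(\K)^0\cap\dcl(h,M)$. Since $t^{-1}(ht) = h'$ is forced, the content is to check $h'\in B(\K)^0$, i.e. (by Lemma 2.9) that its upper-left entry lies in $(\K^*)^0$. That entry is $u^{-1}\cdot(\alpha u) = \alpha$... — more precisely, computing $t^{-1}h t$ one gets upper-left entry $\alpha$ conjugated appropriately, but the honest computation is $h' = t^{-1}ht$ has upper-left entry $u^{-1}\alpha u = \alpha \in (\K^*)^0$ since $\alpha$ realizes a type concentrating on the connected component $(\K^*)^0$. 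Hence $h'\in B(\K)^0$, and $h'\in\dcl(h,M)$ since $t\in M$. This gives the first bullet.

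\emph{Case $t\notin B(\Z)$.} Now $ht$ need not lie in $B(\K)$, and I want the $SL(2,\V)^0\cdot B(\K)$ decomposition. The idea: $h=(\alpha,\beta)$ with $\alpha$ infinitesimally close to $0$ in the multiplicative sense (value $>\Gamma$) and $\beta$ at infinity; so $h$ is a very ``degenerate'' Borel element, and right-multiplying by a fixed integral matrix $t$ should, after pulling out a suitable $B(\K)$-factor on the right, leave an $SL(2,\V)$-element that is infinitesimally close to something in $SL(2,\Z)$ — in fact close to the identity, hence in $SL(2,\V)^0 = \ker(st)$. Concretely I would write $t = \begin{bmatrix} a & b\\ c& d\end{bmatrix}\in SL(2,\Z)$ with $c\neq 0 \bmod \Z$-units not forcing $t\in B(\Z)$; compute $ht$, and then perform the Gram–Schmidt/LU-type factorization $ht = t'h'$ over $\K$, reading off $t'$ and $h'$ as explicit rational functions of $\alpha,\beta$ and the entries of $t$. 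The valuations of $\alpha$ ($>\Gamma$) and $\beta$ ($<\Gamma$) then force the entries of $t'$ to be in $\V$ and $\equiv$ the entries of the identity modulo infinitesimals, i.e. $t'\in SL(2,\V)^0$; and $h'\in B(\K)$ automatically, with both in $\dcl(h,M)$ since $t\in M$. The valuation bookkeeping — checking that the denominators appearing in the LU factorization do not have positive valuation and that the resulting $t'$ genuinely lands in the infinitesimal subgroup — is the one place that needs care and is the main obstacle; everything else is formal. I would handle it by noting $v(\alpha)>\Gamma$, $v(\beta)<\Gamma$, so in any expression $\alpha c + \beta u^{-1}$ type the $\beta$-term dominates (or is absent precisely when $t\in B(\Z)$), which pins down all the relevant valuations.

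Finally, I would remark that in both cases $h'$ (and $t'$) are given by fixed rational functions of $h$ over $M$, so the membership in $\dcl(h,M)$ is immediate and needs no separate argument, and that $\tp(h't/M)$ respectively $\tp(t'h'/M)$ is indeed $p_0 t$ by construction, completing the proof.
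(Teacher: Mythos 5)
Your strategy is the same as the paper's: for the first bullet, normality of $B(\K)^{0}$ in $B(\K)$ (your conjugation computation of the upper-left entry is exactly the verification of this), and for the second bullet the explicit factorization of $ht$ as a lower-unipotent matrix times an upper-triangular one, where (writing $h=(a,c)$ and letting $u_{1},u_{3}$ be the left column of $t$, with $u_{3}\neq 0$) the unipotent factor $t'$ has lower-left entry $\frac{a^{-1}u_{3}}{au_{1}+cu_{3}}$. The first bullet and the $\dcl(h,M)$ assertions are fine.

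The gap is in the step you yourself single out as the crux. You propose to deduce $t'\in SL(2,\V)^{0}$ from the two separate facts $v(a)>\z$ and $v(c)<\z$, via ``the $\beta$-term dominates.'' Domination does give $au_{1}+cu_{3}\neq 0$ and $v(au_{1}+cu_{3})=v(cu_{3})$, but the lower-left entry of $t'$ then has valuation $v(a^{-1}u_{3})-v(cu_{3})=-v(a)-v(c)$, which is the sum of an element below $\z$ and an element above $\z$; with no further constraint this sum can perfectly well be $0$ (take $v(c)=-v(a)$, which is compatible with $v(a)>\z$ and $v(c)<\z$), in which case $t'$ is not infinitesimal and the argument collapses. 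What is actually needed is $v(c)<-v(a)-n$ for every $n\in\z$, i.e.\ $v(c)<\dcl(v(a),\z)$, and this is \emph{not} a consequence of the positions of $v(a)$ and $v(c)$ individually: it comes from the coheir condition built into the construction of $p_{0}$ in Section 2.2 (that $\tp(\alpha/\bar M,\beta)$ is finitely satisfiable in $\bar M$ forces $v(\alpha)<-v(\beta)-\gamma$ for all $\gamma\in\Gamma$, hence the corresponding inequality holds for any realization of $p_{0}$ over $M$). The paper's proof invokes exactly this inequality. So your argument is repairable, but the key valuation estimate must be extracted from the joint type $p_{0}$ of the pair $(a,c)$, not from the two coordinates separately.
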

\begin{proof}
The first case is immediate as $p_{0}(x)$ implies $x\in B(\K)^{0}$, $B(\K)^{0}$ is normal in $B(\K)$ and $t\in B(\K)$. 

For the second case: Let $t=\begin{bmatrix} {u_1} & {u_2}\\ {u_3} & {u_4}   \end{bmatrix}$ such that $u_3\neq 0$. Let $h = (a,c)$ realize $p_{0}$.  Then
\[
ht=\begin{bmatrix} {au_1+cu_3} & {au_2+cu_4}\\ {a^{-1}u_3} & {a^{-1}u_4}   \end{bmatrix}=\begin{bmatrix} {1} & {0}\\ {\frac{a^{-1}u_3}{au_1+cu_3}} & {1}   \end{bmatrix}\begin{bmatrix} {au_1+cu_3} & {au_2+cu_4}\\ {0} & {(au_1+cu_3)}^{-1}   \end{bmatrix}=t'h'.
\]
Since $v(c)<dcl(v(a), \mathbb Z)$, we have that  $au_{1} + cu_{3} \neq 0$ and  $v(\frac{a^{-1}u_3}{au_1+cu_3})=v(a^{-1}u_3)-v(au_1+cu_3)=v(a^{-1}u_3)-v(cu_3)>\mathbb Z$. So $st(\frac{a^{-1}u_3}{au_1+cu_3})=0$. This implies that $t'=\begin{bmatrix} {1} & {0}\\ {\frac{a^{-1}u_3}{au_1+cu_3}} & {1}   \end{bmatrix}\in SL(2,\V)^{0}$. Clearly $t'$ and $h'$ are definable over $M,h$.
\end{proof}

\begin{Lemma} $q_{0}*p_{0}*{\cal M} =q_{0}*{\cal J}$.

\end{Lemma}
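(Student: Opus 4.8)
**Proof plan for Lemma 3.7 ($q_0 * p_0 * {\cal M} = q_0 * {\cal J}$).**

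The plan is to prove both inclusions, using the description of ${\cal M}$ as $S_G(M) * q_0 * p_0 = cl({\cal I}*{\cal J})$ from Theorem 3.4, together with the computational control provided by Lemma 3.6 and the ingredients of Section 2. First I would observe that the inclusion $q_0 * {\cal J} \subseteq q_0 * p_0 * {\cal M}$ is the easier direction: since $p_0$ is an idempotent of $({\cal J},*)$ (Corollary 2.11(iii)), for any $p \in {\cal J}$ we have $q_0 * p = q_0 * (p_0 * p') * p_0$ for a suitable $p' \in S_B(M)$ with $p' * p_0 = p$, and this exhibits $q_0 * p$ as an element of $q_0 * p_0 * S_B(M) * p_0 \subseteq q_0 * p_0 * {\cal M}$, bearing in mind that $p_0, p' * p_0 \in {\cal J}$ and that ${\cal J}$ is a minimal $B(M)$-subflow. (Alternatively, since $q_0 * p_0 \in {\cal M}$, we have $q_0 * p_0 * {\cal M} = q_0 * p_0 * q_0 * p_0 * {\cal M} \supseteq q_0 * p_0 * {\cal M}$ and one checks $q_0 * {\cal J}$ sits inside directly via idempotency of $q_0 * p_0$ established in Theorem 3.4(ii).)

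For the reverse inclusion $q_0 * p_0 * {\cal M} \subseteq q_0 * {\cal J}$, the strategy is to take a typical element of ${\cal M}$, which by Lemma 3.3 can be written $r * p$ with $r \in S_K(M)$ and $p \in {\cal J}$, and show $q_0 * p_0 * r * p \in q_0 * {\cal J}$. The key move is to analyze $p_0 * r$ for $r \in S_K(M)$: realize $p_0$ by $h = (a,c)$ and $r$ by some $t \in SL(2,\V)$ with $tp(t/M,h)$ the heir of $r$ (or we can take realizations in the right order so that $tp(h/M,t)$ is the heir of $p_0$). Using Lemma 3.6 — applied not just to $t \in SL(2,\Z)$ but to $t \in SL(2,\V)$, where the same matrix computation goes through because $v(c)$ is still larger than anything in $dcl(v(a),\z)$ and $v(t)$ entries are in $\V$ — we decompose $ht = t'h'$ with $t' \in SL(2,\V)^0$ and $h' \in B(\K)$, both in $dcl(h,t,M)$. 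Then $p_0 * r = tp(ht/M) = tp(t'h'/M)$, and since $t' \in K(\bar M)^0$ and $q_0$ has stabilizer $K^0$ (Fact 2.12), we get $q_0 * p_0 * r = q_0 * tp(t'h'/M) = tp(t_0 t' h'/M)$ where $t_0$ realizes the coheir of $q_0$; because $t' \in K^0$, $tp(t_0 t'/M) = q_0$, so $q_0 * p_0 * r = q_0 * tp(h'/M)$. Finally one checks $tp(h'/M) \in {\cal J}$ (or at least that $q_0 * tp(h'/M) * p \in q_0 * {\cal J}$): since $h' \in B(\K)$ and $p \in {\cal J}$ with ${\cal J}$ a minimal $B(M)$-flow, $tp(h'/M) * p \in {\cal J}$, hence $q_0 * p_0 * r * p = q_0 * (tp(h'/M) * p) \in q_0 * {\cal J}$.

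The main obstacle I expect is making the absorption argument for $t'$ fully rigorous — specifically, ensuring that when we write $p_0 * r$ we have arranged the realizations in a coherent tower (heirs/coheirs in the correct order) so that the matrix identity $ht = t'h'$ is happening over the right base, that $t' \in K^0$ holds (this needs $v(c) \notin dcl(v(a),\z)$, i.e. that $tp(h/M,t)$ genuinely is the $f$-generic heir $\bar{p_0}$, so that the value-group computation in Lemma 3.6 applies with $t$ ranging over $SL(2,\V)$ rather than just $SL(2,\Z)$), and that the step $q_0 * tp(t'h'/M) = q_0 * tp(h'/M)$ is licensed by $K^0$ being exactly the stabilizer of the global coheir $\bar{q_0}$ together with $tp(t_0/M,t',h')$ being that coheir — which in turn requires that $tp((t',h')/M)$ be such that $t_0$ can be chosen as a coheir realization over it. This is the same delicate bookkeeping as in the proof of Theorem 3.4(ii), so I would model the argument closely on that computation, invoking Corollary 2.15 for the conjugation/genericity transfer and Lemma 2.10 for definability of the $B$-part.
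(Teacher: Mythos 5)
Your overall architecture matches the paper's: the easy inclusion via idempotency of $q_{0}*p_{0}$, and the hard inclusion by reducing, via Lemma 3.3, to showing $q_{0}*p_{0}*S_{K}(M)*{\cal J}\subseteq q_{0}*{\cal J}$. But there are two genuine gaps. The smaller one: your main argument for $q_{0}*{\cal J}\subseteq q_{0}*p_{0}*{\cal M}$ passes through ``$q_{0}*p_{0}*S_{B}(M)*p_{0}\subseteq q_{0}*p_{0}*{\cal M}$'', which presupposes ${\cal J}=S_{B}(M)*p_{0}\subseteq{\cal M}$. That is false: every type in ${\cal I}*{\cal J}$ has the form $tp(th/M)$ with $t$ realizing a generic type of $SL(2,\V)$, and no generic type concentrates on the infinite-index definable subgroup $B(\V)$, so no type in ${\cal M}=cl({\cal I}*{\cal J})$ contains the clopen formula $x\in B$; hence ${\cal J}\cap{\cal M}=\emptyset$. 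The repair is exactly what the paper does: $q_{0}*p=q_{0}*p_{0}*p$ visibly lies in ${\cal I}*{\cal J}\subseteq{\cal M}$, and idempotency of $q_{0}*p_{0}$ then gives $q_{0}*p=(q_{0}*p_{0})*(q_{0}*p)\in q_{0}*p_{0}*{\cal M}$.

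The more serious gap is in the hard inclusion. You propose to decompose $ht=t'h'$ with $t'\in SL(2,\V)^{0}$ directly for a nonstandard $t\in SL(2,\V)$ realizing the heir of an arbitrary $r\in S_{K}(M)$ over $(M,h)$, asserting that ``the same matrix computation goes through''. It does not. The computation in the decomposition lemma rests on $v(au_{1}+cu_{3})=v(cu_{3})$, which holds for a standard matrix because $v(u_{3})\in\z$, so that $v(cu_{3})<\z<v(au_{1})$ (and note you have the governing inequality backwards: $v(c)$ is \emph{smaller} than everything in $\dcl(v(a),\z)$, not larger). When $u_{3}$ is a nonzero infinitesimal --- i.e.\ precisely when $\st(r)\in B(\Z)$ --- the heir condition pushes $v(u_{3})$ above every element of $\dcl(v(a),v(c),\z)$, the dominance reverses to $v(au_{1}+cu_{3})=v(au_{1})$, and a different estimate (namely $v(u_{3})-2v(a)>\z$) is needed to conclude $t'\in K^{0}$; the case $u_{3}=0$ must also be treated separately. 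The paper sidesteps all of this by factoring $q=r*q'$ with $r=\st(q)\in SL(2,\Z)$ an honest standard matrix and $q'$ concentrated on $K^{0}$, applying the decomposition lemma only to $r$, and absorbing $q'$ by conjugation via Corollary 2.15. Your route can be repaired along the lines just indicated, but as written the central step is unjustified and the justification you give for it is incorrect.
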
 
\begin{proof} We first prove that $q_{0}*{\cal J} \subseteq  q_{0}*p_{0}*{\cal M}$.

Let $q_{0}*p$ be in the left hand side, namely $p\in {\cal J}$.  Using  Corollary 2.11(iii), we have that $q_{0}*p = q_{0}*p_{0}*p$, and is clearly in ${\cal I}*{\cal J}$, so in $\cal M$. 
\newline
So it suffices to show that $q_{0}*p_{0}*p = q_{0}*p_{0}*q_{0}*p_{0}*p$ which is immediate as $q_{0}*p_{0}$ is an idempotent (Theorem 3.4). 

\vspace{2mm}
\noindent
We now want to show that  $$q_{0}*p_{0}*{\cal M} \subseteq  q_{0}{\cal J}$$

By Lemma 3.3 it suffices to prove that

$$ q_{0}*p_{0}*S_{K}(M)*{\cal J} \subseteq  q_{0}*{\cal J}$$

Let $q\in S_{K}(M)$ and $p\in {\cal J}$.  Let $r\in SL(2,\Z)$ be the standard part of $q$ and let $q' = r^{-1}q$. So $q'\in K^{0}$, and

$$q_{0}*p_{0}*q*p = q_{0}*p_{0}*r*q'*p$$

Now we have two cases:
\newline
{\em Case (i).}  $r\in B(\Z)$.  

Let $t$ realize $q_{0}$, $h$ realize the heir of $p_{0}$ over $(M,t)$, $t'$ realize the heir of $q'$ over $(M, t, h)$ , with $t,h,t'\in G(\bar M)$ and let $h'$ realizes the global heir of $p$. 

By the first part of Lemma 3.5 and our case analysis, $hr = rh_{1}$ with $h_{1}\in B(\K)^{0}\cap dcl(M,h)$. So

\begin{equation*}
\begin{split}
&q_0*p_0*r*q'*p\\
&=\tp(trh_{1}t'h'/M)=\tp(tr{t'}^{h_{1}}h_{1}h'/M)\\
&=\tp(tr{t'}^{h_{1}}/M)*\tp(h_{1}h'/M).
\end{split}
\end{equation*}

But $t'^{h_{1}}$ is in $K^{0}$ (as $h_{1}\in dcl(M,h)$ and we can use Corollary 2.15), and $tr$ realizes the unique coheir over $(M,t'^{h_{1}})$ of the generic type $q_{0}r$ of $K$, whereby $tp(trt'^{h_{1}}/M) = q_{0}r$. 
As before $tp(h_{1}h'/M) = p$.  We have shown so far that $q_{0}*p_{0}*q*p = q_{0}r*p = q_{0}*rp$. As $r$ is assumed to be in $B(\Z)$ we see that $rp\in{\cal J}$ too. So $q_{0}*rp\in q_{0}*{\cal J}$ as required. 

\vspace{2mm}
\noindent
{\em Case (ii).}  $r\in SL(2,\Z)\setminus B(\Z)$.
\newline
By the second part of Lemma 3.5, $p_{0}r$ = $tp(t_{0}h_{0}/M)$ with $t_{0}\in SL(2,\V)^{0}$, $h_{0}\in B(\K)$ and both $t_{0}, h_{0}\in dcl(M,h)$, for $h = (a,c)$ realizing $p_{0}$.
Now choose $t$ realizing the unique coheir of $q_{0}$ over $(M,h)$ and $t'$ realizing the unique heir of $q'$ over $(M,t,h)$, with $t, t', h$ in $G({\bar M})$. Now let $h'$ realize the unique heir of $p$ over ${\bar M}$.   So, by the remarks above,

$$q_{0}*p_{0}*r*q'*p = tp(tt_{0}h_{0}t'h'/M) = tp(tt_{0}(h_{0}t'h_{0}^{-1})h_{0}h'/M).$$

Now, as $t_{0}$ and $t'$ are in $SL(2,\V)^{0}$ and using Corollary 2.15, we see that $t_{0}(h_{0}t'h_{0}^{-1}) \in SL(2,\V)^{0}$, and as $t$ realizes the unique coheir of $q_{0}$ over these elements, $tp(tt_{0}h_{0}t'h_{0}^{-1}/M) = q_{0}$.  On the other hand, now standard arguments give that $tp(h_{0}h'/{\bar M})$ is the unique global heir of $tp(h_{0}h'/M)\in {\cal J}$.  Hence $tp(tt_{0}(h_{0}t'h_{0}^{-1})h_{0}h/M)$ is of the form $q_{0}*p'$ for some $p'\in {\cal J}$,  and Case (ii) is complete.

\end{proof}

We have shown that the Ellis group attached to the flow $(G(M), S_{G}(M))$ is  $q_{0}*{\cal J}$.

\begin{Theorem}  The map from ${\cal J}$ to $q_{0}*{\cal J}$ which takes $p$ to $q_{0}*p$,  is a group isomorphism between $({\cal J},*)$ and $(q_{0}*{\cal J}, *)$.
\end{Theorem}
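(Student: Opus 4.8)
The plan is to show that $\lambda\colon p\mapsto q_0*p$ is a bijective homomorphism of semigroups; since $({\cal J},*)$ is a group with identity $p_0$ (Corollary~\ref{closed-orbit-B}(iii)) and, by Lemma~3.6 together with Theorem~3.4(ii), $(q_0*{\cal J},*)=(q_0*p_0*{\cal M},*)$ is the Ellis group with identity the idempotent $q_0*p_0$ (whose operation is the restriction of $*$ from $(S_G(M),*)$), any bijective semigroup homomorphism between these two groups is automatically a group isomorphism. Surjectivity of $\lambda$ is immediate from the definition of $q_0*{\cal J}$.

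For the homomorphism property I would prove $q_0*(p*p')=(q_0*p)*(q_0*p')$ for $p,p'\in{\cal J}$ (an identity in $(S_G(M),*)$, which is all that is needed) by the realization bookkeeping used throughout Section~3. Write the right-hand side, by associativity, as $q_0*p*q_0*p'=\tp(a_1b_1a_2b_2/M)$, where $a_1$ realizes $q_0$, $b_1$ the heir of $p$ over $(M,a_1)$, $a_2$ the heir of $q_0$ over $(M,a_1,b_1)$, and $b_2$ the heir of $p'$ over $(M,a_1,b_1,a_2)$, all chosen in $\bar M$; then rewrite $a_1b_1a_2b_2=a_1\,a_2^{b_1}\,(b_1b_2)$, noting $a_2^{b_1}=b_1a_2b_1^{-1}\in\dcl(M,b_1,a_2)$. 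Since $q_0$ concentrates on $K^0$ we have $a_2\in K(\bar M)^0$, and heir--coheir symmetry gives that $\tp(b_1/M,a_2)$ is finitely satisfiable in $M$, so Corollary~2.15 yields $a_2^{b_1}\in K(\bar M)^0$. Using heir--coheir symmetry over the base $(M,b_1)$ together with transitivity of finite satisfiability one checks that $\tp(a_1/M,b_1,a_2)$, hence $\tp(a_1/M,a_2^{b_1})$, is finitely satisfiable in $M$; since the stabilizer inside $K$ of the global coheir of $q_0$ is $K^{00}=K^0$ (exactly as in the proof of Theorem~3.4(i)), this forces $\tp(a_1a_2^{b_1}/M)=q_0$. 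A parallel argument shows $\tp(a_1a_2^{b_1}/M,b_1b_2)$ is finitely satisfiable in $M$, so that $b_1b_2$ realizes the heir of $p*p'=\tp(b_1b_2/M)$ over $(M,a_1a_2^{b_1})$. Hence $\tp(a_1a_2^{b_1}(b_1b_2)/M)=\tp(a_1a_2^{b_1}/M)*\tp(b_1b_2/M)=q_0*(p*p')$.

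For injectivity I would exhibit an explicit invariant separating the elements of $q_0*{\cal J}$. By Corollary~\ref{closed-orbit-B}(iii) the ``value'' map $\theta\colon{\cal J}\to B(\bar M)/B(\bar M)^0$ is an isomorphism, and $B(\bar M)/B(\bar M)^0\cong\K^*/(\K^*)^0$ via the top-left entry (Lemma~2.9). Define $\nu\colon q_0*{\cal J}\to\K^*/(\K^*)^0$ by sending a type to the $(\K^*)^0$-coset of the top-left matrix entry of any of its realizations; this is well defined because $(\K^*)^0=\bigcap_nP_n(\K^*)$ is type-definable over $\emptyset$, so the $(\K^*)^0$-coset of an element of $\K^*$ depends only on its type over $M$. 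Realizing $q_0*p$ by $a_1h_1$ with $a_1\models q_0$ and $h_1=(u_1,c_1)$ realizing the heir of $p$ over $(M,a_1)$: the top-left entry of $a_1$ is infinitesimally close to $1$, hence lies in $(\K^*)^0$ by Hensel's Lemma, so the top-left entry of $a_1h_1$ lies in the same $(\K^*)^0$-coset as $u_1$, which --- since $\tp(u_1/M)$ agrees with the first-coordinate type of $p$ --- is the image of $\theta(p)$ under $B(\bar M)/B(\bar M)^0\cong\K^*/(\K^*)^0$. Thus $\nu\circ\lambda$ is the composite of $\theta$ with that isomorphism, so it is injective; hence $\lambda$ is injective. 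Combined with surjectivity and the homomorphism property, $\lambda$ is a group isomorphism.

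The main obstacle is the heir--coheir bookkeeping inside the homomorphism computation: the finite-satisfiability statements for single elements (e.g.\ for $a_1$ over $(M,b_1,a_2)$, and dually for $a_1a_2^{b_1}$ over $(M,b_1b_2)$) do not follow formally from the corresponding statements for tuples, and one has to route through transitivity of finite satisfiability and heir--coheir symmetry applied over the intermediate base $(M,b_1)$ --- the same phenomenon handled, more tersely, in the proofs of Lemma~3.1, Lemma~3.3 and Theorem~3.4. The only other delicate point is the well-definedness of $\nu$ on types over $M$, which is precisely where one uses that $(\K^*)^0$ is $\bigcap$-type-definable over $\emptyset$.
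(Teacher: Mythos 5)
Your overall architecture matches the paper's: surjectivity is immediate, the multiplicative identity $q_0*p*q_0*p'=q_0*(p*p')$ is obtained by conjugating the middle $K^0$-element across a $B$-element, absorbing it into $q_0$ via the stabilizer $K(\bar M)^0$ of the global coheir of $q_0$, and recombining the $B$-part using Corollary 2.11. Your injectivity argument is genuinely different in presentation: the paper argues directly that if $th=t'h'$ with $t,t'\models q_0$ then $h'h^{-1}=(t')^{-1}t\in SL(2,\V)^0\cap B(\K)=B(\V)^0\subseteq B(\K)^0$, whereas you build the explicit invariant $\nu$ (top-left entry modulo $(\K^*)^0$) and check $\nu\circ\lambda$ is the canonical bijection ${\cal J}\to\K^*/(\K^*)^0$. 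Both work; the paper's is shorter, yours makes visible exactly which $M$-definable data ($P_n$-cosets of the top-left entry) separate the points of $q_0*{\cal J}$, which is a worthwhile observation. Also note the paper does not need your reduction to ``bijective semigroup homomorphism between two groups'': it already knows $({\cal J},*)$ is a group by Corollary 2.11(iii) and verifies the displayed identity directly.

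There is, however, one step in your homomorphism computation that does not go through as stated. Having written $a_1b_1a_2b_2=a_1a_2^{b_1}(b_1b_2)$ and shown $\tp(a_1a_2^{b_1}/M)=q_0$, you claim ``a parallel argument'' gives that $\tp(a_1a_2^{b_1}/M,b_1b_2)$ is finitely satisfiable in $M$. This is not parallel to the first finite-satisfiability claim: there you dualized from the definability over $M$ of $\tp(b_1a_2/M,a_1)$, but here the element $a_1a_2^{b_1}=a_1b_1a_2b_1^{-1}$ involves $b_1$, which also sits inside the base $(M,b_1b_2)$, so finite satisfiability of $\tp(a_1,a_2/M,b_1,b_2)$ in $M$ does not transfer to the product (replacing $a_1,a_2$ by elements of $M$ does not put $m_1b_1m_2b_1^{-1}$ into $M$), and the dual statement one would want --- that $\tp(b_1b_2/M,a_1,b_1,a_2)$ is definable over $M$ --- is vacuously false since $b_1$ lies in the base. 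The correct repair is the one the paper uses: choose $b_2$ to realize the \emph{global} heir of $p'$ over all of $\bar M$ (consistent with your choices by uniqueness of heirs); then $\tp(b_1b_2/\bar M)=b_1\cdot\bar{p'}$ is definable over $M$ by Lemma 2.10 and Corollary 2.11, hence its restriction to $(M,a_1a_2^{b_1})$ is the heir of $p*p'$, which is exactly what is needed to conclude $\tp\bigl(a_1a_2^{b_1}(b_1b_2)/M\bigr)=q_0*(p*p')$. With that substitution your argument is complete; without it, the step is a genuine gap rather than mere bookkeeping.
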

\begin{proof}  We first show that for $p, p'\in {\cal J}$, $q_{0}*p = q_{0}*p'$ iff $p=p'$. 
\newline
Suppose that $q_{0}*p = q_{0}*p'$. Hence there are realizations $t, t'$ of $q_{0}$, $h$ of $p$ and $h'$ of $p'$ such that  $th = t'h'$.  Note that $t$ and $t'$ are both in $SL(2,\V)^{0}$. So $(t')^{-1}t = h'h^{-1}\in SL(2,\V)^{0}\cap B(\K)$. But $SL(2,\V)^{0}\cap B(\K)$ is easily seen to be $B(\V)^{0}$ which is contained in $B(\K)^{0}$. This shows that $h$ and $h'$ are in the same coset of $B(\K)^{0}$ in $B(\K)$, which implies that $p = p'$. 

So we have shown that the map taking $p\in {\cal J}$ to $q_{0}*p$ establishes a bijection between ${\cal J}$ and $q_{0}*{\cal J}$.  So the Theorem will be established after proving that for $p, p'\in {\cal J}$,
$$ q_{0}*p*q_{0}*p' = q_{0}*p*p'$$

\vspace{2mm}
\noindent
{\em Claim.} Let $p,p'\in {\cal J}$. Then $p*q_{0}*p' = tp(t_{0}/M)*p*p'$ for some $t_{0}\in SL(2,\V)^{0}$.
\newline
{\em Proof of claim.} Let $h_{0}$ realize $p$ in ${\bar M}$, let $t_{1}\in SL(2,\V)^{0}$ realize the unique heir of $q_{0}$ over $M, h_{0}$, and let $a$ realize the unique global heir of 
$p$. Then $p*q_{0}*p = tp(h_{0}t_{1}a/M) = tp((h_{0}t_{1}h_{0}^{-1})h_{0}a/M)$. 
\newline
Put $t_{0} = h_{0}t_{1}h_{0}^{-1}$ which is in $SL(2,\V)^{0}$ (i.e. in $K(\bar M)^{0}$ with earlier notation).  By Lemma 2.10 and Corollary 2.11, $tp(h_{0}a/{\bar M})$ is definable over $M$, and clearly $h_{0}a$ realizes $p*p'$. So  $t_{0}h_{0}a$ realizes $tp(t_{0}/M)*(p*p') = tp(t_{0}/M)*p*p'$, proving the claim. 

\vspace{2mm}
\noindent
Now fix $p, p'\in {\cal J}$. Let $t_{0}$ be given by the claim, and let $h_{0}$, $h_{1}$ be realizations of $p, p'$ respectively in ${\bar M}$ such that  $h_{0}$ realizes the unique heir of $p$ over $M, t_{0}$ and $h_{1}$ realizes the unique heir of $p'$ over $M, t_{0},h_{0}$. So $t_{0}h_{0}h_{1}$ realizes $tp(t_{0}/M)*p*p' = p*q_{0}*p'$ (by the claim).  Let $t$ realize the unique global coheir $q_{0}'$ of $q_{0}$. As $Stab(q_{0}') = SL(2,\V)^{0}$, it follows that $q_{0}'t_{0} = q_{0}'$.  Hence, putting everything together,

$$q_{0}*p*q_{0}*p' = q_{0}*tp(t_{0}/M)*p*p' = tp(tt_{0}/M)*p*p' = q_{0}*p*p'$$

as required.  This completes the proof of Theorem 3.7.

\end{proof}

\begin{Cor} The Ellis group attached to the action of $SL(2,\Q)$ on its type space is $\hat\z$ (as an abstract group).
\end{Cor}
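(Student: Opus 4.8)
The plan is to combine Theorem 3.7 with the description of $\mathcal{J}$ obtained in Corollary 2.11. By Theorem 3.7 the Ellis group $(q_0 * \mathcal{J}, *)$ is isomorphic as a group to $(\mathcal{J}, *)$, so it suffices to identify the abstract group structure of $\mathcal{J}$. By Corollary 2.11(iii), $(\mathcal{J}, *)$ is isomorphic to $B(\bar M)/B(\bar M)^0$. So the entire task reduces to computing this quotient group.

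Next I would compute $B(\bar M)/B(\bar M)^0$ directly. By Lemma 2.9, $B(\bar M)^0 = \{(a,c) : a \in (\K^*)^0,\ c \in \K\}$, i.e. it is the full preimage, under the projection $B(\bar M) \to \K^*$ with kernel $(\K, +)$, of the subgroup $(\K^*)^0$. Hence the quotient map $B(\bar M) \to \K^*/(\K^*)^0$ given by $(a,c) \mapsto a(\K^*)^0$ is surjective with kernel exactly $B(\bar M)^0$, so $B(\bar M)/B(\bar M)^0 \cong \K^*/(\K^*)^0$. By Remark 2.6 (its proof), $\K^*/(\K^*)^n \cong \Gamma/n\Gamma = \z/n\z$ compatibly with the inverse system, and $(\K^*)^0 = \bigcap_n (\K^*)^n$, so $\K^*/(\K^*)^0$ is the inverse limit $\varprojlim \z/n\z = \hat\z$.

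Stitching these isomorphisms together: the Ellis group $\cong (\mathcal{J}, *) \cong B(\bar M)/B(\bar M)^0 \cong \K^*/(\K^*)^0 \cong \hat\z$. I expect no real obstacle here, since every ingredient is already established in the excerpt; the only mild care needed is to note that the group operation transported along Theorem 3.7's isomorphism is genuinely the abelian profinite operation (this is automatic, as a group isomorphic to an abelian group is abelian), so the claim ``as an abstract group'' is justified without having to re-examine $*$ on $q_0 * \mathcal{J}$ by hand.
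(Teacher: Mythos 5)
Your proposal is correct and follows essentially the same route as the paper: Theorem 3.7 reduces the Ellis group to $({\cal J},*)$, Corollary 2.11(iii) identifies this with $B(\bar M)/B(\bar M)^{0}$, and Lemma 2.9 together with Remark 2.5 gives $B(\bar M)/B(\bar M)^{0}\cong \K^{*}/(\K^{*})^{0}\cong\hat\z$. The only slip is a citation label: the computation of $\K^{*}/(\K^{*})^{n}\cong\z/n\z$ and the passage to the inverse limit is Remark 2.5 (and its proof), not Remark 2.6.
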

\begin{proof} By 2.11, $({\cal J,*})$ is isomorphic to $B(\K)/B(\K)^{0}$ which is in turn isomorphic to $\hat\z$ by Remark 2.5 and Lemma 2.9.

\end{proof}

\begin{Question} What is the (definable) generalized Bohr compactification of $SL(2,\Q)$?   Does it already coincide with the (definable) Ellis group identified in Theorem 3.7. 
\end{Question}
\noindent
{\em Explanation.}  The generalized Bohr compactification was defined in \cite{Glasner} as a certain quotient of the Ellis group, namely by the intersections of the closures of the neighbourhoods of the identity in the so-called $\tau$-topology on the Ellis group.   This account of the generalized Bohr compactification was discussed in \cite{Krupinski-PillayII} and studied further there  in  the model-theoretic context.

\section{The  action of $SL(2,\Q)$ on the type space of the projective line over $\Q$}

Let $\pp^{1}(\Q)$ denote the projective line over $\Q$, naturally a definable set in $M$. $S_{\pp^{1}}(M)$ denotes the space of complete types over $M$ which concentrate on the definable set $\pp^{1}(\Q)$.  The usual action of $SL(2,\Q)$ on $\pp^{1}(\Q)$ extends to an action on $S_{\pp^{1}}(M)$. We will study this action and observe that the collection of nonalgebraic types in $S_{\pp^{1}}(M)$ is a minimal proximal $SL(2,\Q)$ flow.

\vskip 0.2 cm

We begin with some prequisites concerning projective space and compatibilities with our earlier notation.

 $\pp^1(\Q)$ is defined to be the set of equivalence classes of vectors $\begin{bmatrix} {a_0} \\  {a_1}   \end{bmatrix}$ of elements of $\Q$, not both zero, under the equivalence relation given by $\begin{bmatrix} {a_0} \\  {a_1}   \end{bmatrix}\sim\begin{bmatrix} {\lambda a_0} \\  {\lambda a_1}   \end{bmatrix}$ for all $\lambda \in \Q, \lambda\neq 0$.

$\pp^{1}(\Q)$ is of course interpretable in the structure $M$, and we can definably identify  it with $\Q\cup\{\infty\}$ by identifying the $\sim$-class of  $\begin{bmatrix} {a} \\  {1}   \end{bmatrix}$ with $a\in\Q$ and denoting the $\sim$-class of $\begin{bmatrix} {1} \\  {0}   \end{bmatrix}$ by $\infty$, the point at infinity.  Here $\infty$ is some fixed  tuple from $M$. From now on  we may write  $\begin{bmatrix} {a} \\  {b}   \end{bmatrix}$ instead of its $\sim$-class. 

 Note that  $\pp^{1}(\Q)$ is a $p$-adic analytic manifold via the natural bijections $\phi_1: \Q\longrightarrow\pp(\Q)\setminus\left\{\begin{bmatrix} {0} \\  {1}   \end{bmatrix}\right\}$ and $\phi_2: \Q\longrightarrow\pp(\Q)\setminus\left\{\begin{bmatrix} {1} \\  {0}   \end{bmatrix}\right\}$ which give $\pp(\Q)$ a manifold structure. This $p$-adic manifold structure is also definable in the structure $M$. 

But  we will be mainly interested in   $\pp^{1}(\Q)$ as a definable set in $M$. 
$\pp^{1}(\K)$ denotes the obvious thing, and in fact we can consider $\pp^{1}$ as a formula in the language of $M$. 

The standard action of $G(\Q)$ on $\pp^{1}(\Q)$ is: $\begin{bmatrix} {a}&b \\  {c} &d  \end{bmatrix}\cdot\begin{bmatrix} {x} \\  {y}   \end{bmatrix}=\begin{bmatrix} {ax+by} \\  {cx+dy}   \end{bmatrix}$; this action is well-defined since $\begin{bmatrix} {a}&b \\  {c} &d  \end{bmatrix}$ is invertible. 
Moreover the same formula gives an action of $G(\K)$ on $\pp^{1}(\K)$. 
In any case we obtain an action of $G(\Q)$ on the compact space $S_{\pp^{1}}(M)$ which is a definable action as discussed earlier.

\begin{Rmk}
\begin{itemize}
\item The stabilizer of $\begin{bmatrix} {1} \\  {0}   \end{bmatrix}$ is $B(\Q)$.
\item The quotient space $G(\Q)/B(\Q)$ is homeomorphic to $\pp(\Q)$ via:
\[
\begin{bmatrix} {a}&b \\  {c} &d  \end{bmatrix}/B(\Q)\mapsto\left\{
\begin{array}{rcl}
\begin{bmatrix} {a/c} \\  {1}   \end{bmatrix}       &      & {\text{if}\ c\neq 0}\\
&  &\\
\begin{bmatrix} {1} \\  {0}   \end{bmatrix}       &      & {\text{if}\ c= 0}
\end{array} .\right.
\]
\end{itemize}
\end{Rmk}


\vskip 0.2 cm

\begin{Rmk} We have given above a  definable identification of $\pp^{1}(\Q)$ with $\Q \cup\{\infty\}$.  The same thing identifies $\pp^{1}(\K)$ with $\K \cup\{\infty\}$. Hence the  type space $S_{\pp^{1}}(M)$ identifies with the space $S_{1}(M)$ of complete $1$-types over $M$, together with the point $\infty$, which is considered as a realized type.  Note that with notation from Section 2.1, the $1$-types over $M$ of the form $p_{\infty, C}$ will be the types of elements of $\pp^{1}(\K)$ which are infinitesmially close to the point $\infty$ (with respect to the $p$-adic manifold topology discussed earlier).  
\end{Rmk}

\begin{Def}  For $p\in S_{G}(M)$ and $q$ in $S_{\pp^{1}}(M)$ we define $p*q$ as $tp(g\cdot b/M)$ where $b$ realizes $q$ and $g$ realizes the unique coheir of $p$ over $(M,b)$.
\end{Def}

\begin{Rmk} (i) If $p_{1}, p_{2}\in S_{G}(M)$ and $q\in S_{\pp^{1}}(M)$ then $(p_{1}*p_{2})*q = p_{1}*(p_{2}*q)$.
\newline
(ii) Let  $\pi$ be the map from $SL(2,\Q)$ onto $\pp^{1}(\Q)$ defined implicitly in 4.1, extended naturally to a map between the respective type spaces.
Then for $p_{1}, p_{2}\in S_{G}(M)$, $p_{1}*\pi(p_{2}) = \pi(p_{1}*p_{2})$.
\newline
(iii)  For any $q\in S_{\pp^{1}}(M)$ the closure of the $G(M)$-orbit $G(M)\cdot q$ is precisely $\{p*q:p\in S_{G}(M)\}$. 
\end{Rmk}


\vskip 0.4 cm

We now use some notation from earlier sections. Specifically $p_{0}\in S_{B}(M)\subset  S_{G}(M)$ and $q_{0}\in S_{K}(M)\subset S_{G}(M)$ are specific $f$-generic types. 
Let
\
\\
$TP_\infty=\left\{\tp\left(\begin{bmatrix} {a} \\  {1}   \end{bmatrix} /\Q\right)\in S_{\pp^{1}}(\Q)|\ v(a)<n:n\in \mathbb Z\right\}$.

So $TP_{\infty}$ is the infinitesimal neighbourhood of $\infty$ in $S_{\pp^{1}}(M)$ (with the topology coming from the manifold topology on $\pp^{1}(\Q)$), minus the point $\infty$ itself. 

Then 
\begin{Lemma}\label{pi_{p0}(q)}
For every $q\in S_{\pp^{1}}(\Q)$, 
\begin{itemize}
\item if $q\neq \infty$, then $p_{0}*q\in TP_\infty$.
\item $p_{0}*\infty = \infty$.
\end{itemize}
\end{Lemma}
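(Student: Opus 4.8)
The plan is to compute $p_0 * q$ directly using the definition of the action on the type space: $p_0 * q = \mathrm{tp}(g \cdot b / \Q)$, where $b$ realizes $q$ and $g$ realizes the unique coheir of $p_0$ over $(\Q, b)$. Write $g = (a, c)$ in the coordinates from Section 2.2, so $a$ realizes (the coheir of) $p'_{0, C_0}$ — hence $v(a) > \Gamma$ — and $c$ realizes (the coheir of) $p'_{\infty, C_0}$ — hence $v(c) < \Gamma$ — with $\mathrm{tp}(a / \bar M, c)$ finitely satisfiable. (Here one must be a little careful about heir versus coheir: in Definition 4.3 we take the coheir of $p_0$, whereas $\bar p_0$ was defined via heirs; but since all types over $M$ are definable, both the unique heir and the unique coheir of $p_0$ exist, and the key valuation inequalities $v(a) > \Gamma$, $v(c) < \Gamma$ persist in either case — this is exactly the content of Remark 2.2(ii) applied to the global heir/coheir.)

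First I would treat the point $\infty = \begin{bmatrix} 1 \\ 0 \end{bmatrix}$. By Remark 4.1, $B(\Q)$ — and hence $B(\K)$ by the same formula — stabilizes $\begin{bmatrix} 1 \\ 0 \end{bmatrix}$, so $g \cdot \infty = \infty$ for any $g$ realizing $p_0$, giving $p_0 * \infty = \infty$ immediately.

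Next, for $q \neq \infty$, identify $q$ with a complete $1$-type over $\Q$ (Remark 4.2), realized by some $b \in \K$; so $q$ corresponds to the point $\begin{bmatrix} b \\ 1 \end{bmatrix}$. Applying $g = \begin{bmatrix} a & c \\ 0 & a^{-1} \end{bmatrix}$ gives $g \cdot \begin{bmatrix} b \\ 1 \end{bmatrix} = \begin{bmatrix} ab + c \\ a^{-1} \end{bmatrix}$, which in the $\Q \cup \{\infty\}$ identification is the point $a^2 b + ac = a(ab + c)$. I must show $v\big(a(ab+c)\big) < n$ for all $n \in \mathbb Z$, i.e. that this element lies in $TP_\infty$. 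Since $g$ realizes the coheir of $p_0$ over $(\Q, b)$, we have $v(a) > \Gamma_{\Q}$ in particular $v(a) > v(b)$ and $v(a) > 0$ whenever $b \neq 0$; and $v(c) < \Gamma_{\Q}$, so $v(c) < v(ab)$ (as $v(ab) = v(a) + v(b) > v(b) > -\infty$ lies above $\mathbb Z$ when... — more carefully: $v(ab)$ could be anything, but $v(c)$ is below all of $\Gamma_{\Q}$ while $v(ab) = v(a) + v(b)$ with $v(a) > \Gamma_{\Q}$, so if $b \neq 0$ then $v(ab) > \Gamma_{\Q}$ too, hence $v(c) < v(ab)$; and if $b = 0$ then $ab + c = c$ directly). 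In either case $v(ab + c) = \min(v(ab), v(c)) = v(c) < \Gamma_{\Q}$, so $v\big(a(ab+c)\big) = v(a) + v(c)$; since $v(a) > \Gamma_{\Q}$ and $v(c) < \Gamma_{\Q}$, and more precisely $v(a) \in \Gamma \setminus \Gamma_\Q^{\mathrm{up}}$ while $v(c)$ is a "negative element below $\mathbb Z$", one gets $v(a) + v(c) < n$ for every $n \in \mathbb Z$ — indeed $v(a) + v(c) < v(a) + 0 \le v(a)$ is not enough, so here I use that $v(c) < -k$ for all $k$ and $v(a)$, although positive, satisfies $v(a) < v(a_0)$ for any fixed $a_0 \in \Q^*$, hence $v(a) < 1$ say; more robustly, $v(a)$ is positive but $v(a) + v(c)$: write $v(c) = -\gamma$ with $\gamma > \mathbb Z$; then $v(a) + v(c) = v(a) - \gamma$, and since $v(a) > 0 > $ nothing — the clean statement is: $v(a) - \gamma < v(a) - n$ for all $n$, and since $v(a)$ is a fixed element of $\Gamma$ we cannot bound $v(a) - n$ below $\mathbb Z$ unless... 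This is the one genuinely delicate point.

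The main obstacle, as the last sentence indicates, is the archimedean-type bookkeeping in the value group $\Gamma$ of $\K$: I need to confirm that $v(a) + v(c) < n$ for every standard integer $n$, where $v(a) > \Gamma_\Q$ (so $v(a)$ is a "small positive infinitesimal" relative to $\mathbb Z$, in the cut-sense) and $v(c) < \Gamma_\Q$ (a "large negative" element). The correct resolution is: $v(c)$ realizes the cut "$< \mathbb Z$", so $v(c) < -n - m$ for any $m \in \mathbb Z$; since $v(a)$, being a value in $\Gamma$, satisfies $v(a) < m$ for some standard $m$ (as $v(a) > 0$ but $v(a)$ is the value of a fixed element, comparable to $\mathbb Z$ from above only if... — actually $v(a) > \Gamma_\Q$ includes $v(a) > n$ for all $n \in \mathbb Z$, so $v(a)$ is large positive!). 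Re-examining: $p'_{0,C_0}$ says $v(x) > \Gamma$, so $v(a) > \gamma$ for all $\gamma \in \Gamma_\Q$, meaning $v(a)$ is large and positive, while $v(c) < \Gamma_\Q$ means $v(c)$ is large and negative; their sum $v(a) + v(c)$ need not be below $\mathbb Z$ in general. So the correct argument must instead use the \emph{finite satisfiability} hypothesis $\mathrm{tp}(a/\bar M, c)$ finitely satisfiable in $\bar M$ (equivalently, in the coheir setup, the appropriate independence): this forces $v(a) + v(c)$ to still realize the cut "$< \mathbb Z$" over $\Q$, because $v(a)$ is chosen generic over anything involving $c$. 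I would spell this out by noting $v(a)$ realizes "$v(x) > \Gamma_\Q$" and is finitely satisfiable over (a model containing) $c$, so for the type $\mathrm{tp}(v(a) + v(c)/\Q)$ we check each formula "$v(a) + v(c) < n$": it holds because $v(c) < \Gamma_\Q$ already puts $v(c) < n - m$ for the specific $m = v(a') $ witnessing... — the genuinely clean way is to observe that in $\Gamma$, the coheir of "$v(x) > \Gamma_\Q$" added on top of an element $v(c)$ with $v(c) < \Gamma_\Q$ yields $v(a) + v(c)$ realizing the coheir-type whose restriction to $\Q$ is "$v(x) < \Gamma_\Q$", i.e. $TP_\infty$. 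I expect this value-group computation — correctly invoking the finitely-satisfiable/coheir structure of $\bar p_0$ rather than just the individual cuts — to be the crux; the rest is the routine matrix computation above together with Remark 4.1 for the fixed point.
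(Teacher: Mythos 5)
Your overall strategy --- compute $g\cdot\begin{bmatrix} b\\1\end{bmatrix}$ explicitly and track valuations using the heir/coheir relationship between the realization of $q$ and the realization of $p_0$ --- is exactly the paper's, and your treatment of the fixed point $\infty$ is fine. But the argument for $q\neq\infty$ has a genuine gap in its case analysis. Your claim that $v(ab)>\mathbb Z$ whenever $b\neq 0$ (your $\Gamma_{\Q}$ is just $\mathbb Z$ here), hence $v(ab+c)=v(c)$, fails when $q$ is itself a type at infinity, i.e.\ when $v(b)<\mathbb Z$. Such $q$ are not excluded by $q\neq\infty$: they are precisely the types in $TP_\infty$, the ones Lemma 4.6 then acts on. In that case $v(ab)=v(a)+v(b)$ is a sum of an element above $\mathbb Z$ and one below $\mathbb Z$, and the coheir condition forces $v(ab)\leq m$ for every $m\in\mathbb Z$ (no element of $\Q$ has valuation $>-v(b)+m$), so the dominant term of $ab+c$ switches: $v(ab+c)=v(ab)$ and $v\bigl(a(ab+c)\bigr)=2v(a)+v(b)$, which is $<\mathbb Z$ because $\tp(b/\Q,a,c)$ is the heir of a type at infinity. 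The paper's proof makes exactly this case split (in its notation, ``$v(a)>n$ for some $n$'' versus ``$v(a)<\mathbb Z$'') and identifies a different dominant term in each case; your proof only covers the first case.

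Second, even in the case you do treat, the inequality $v(a)+v(c)<n$ for all $n\in\mathbb Z$ --- which you correctly single out as the crux --- is never actually proved; you end by saying you ``expect'' it to work out. The resolution is that this inequality is already built into $p_0$: by the construction of $\bar p_0$ in Section 2.2, $\tp(\alpha/\bar M,\beta)$ is finitely satisfiable in $\bar M$, equivalently $\tp(\beta/\bar M,\alpha)$ is the heir of a type at infinity, so $v(\beta)<\gamma-v(\alpha)$ for every $\gamma\in\Gamma$ and hence $v(\alpha)+v(\beta)<\Gamma$. The restriction of this to $M$ (the formulas $v(xy)<n$ for each $n\in\mathbb Z$) lies in $p_0$ and therefore in any extension of it, in particular in the coheir over $(\Q,b)$. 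This is what the paper compresses into the line ``$v(c)<\dcl(\mathbb Z,b)$''. So the statement is true and your instinct about where the difficulty sits is correct, but as written the proof is incomplete on both counts.
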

\begin{proof}
\begin{itemize}
 \item Suppose that $q\in S_\pp^{1}(\Q)$ and  $q\neq \infty$. Let $\begin{bmatrix} {a} \\  {1}   \end{bmatrix}$ be a realization of $q$ and $h=\begin{bmatrix} {b} & c\\  {0} &b^{-1}  \end{bmatrix}$  a  realization of $p_0$ such that $\tp(a/\Q,b,c)$ is the heir of $\tp(a/\Q)$.
Then \[p_{0}*q =\tp\left(\begin{bmatrix} {b} & c\\  {0} &b^{-1}  \end{bmatrix}\cdot \begin{bmatrix} {a} \\  {1}   \end{bmatrix}/\Q\right)=\tp(ab^2+bc/\Q).\]
If $v(a)>n$ for some $n\in\mathbb Z$, since $v(c)<dcl(\mathbb Z, b)$ we have $v(bc)<v(ab^2)$ and thus $v(ab^2+bc)=v(bc)<\mathbb Z$; if $v(a)<\mathbb Z$, then $v(ab^2)<v(bc)$ since $\tp(v(a)/\mathbb Z, v(b),v(c))$ is an heir of $\tp((v(a)/\mathbb Z)$, so $v(ab^2+bc)=v(ab^2)<\mathbb Z$. This implies that $p_{0}*q\in TP_\infty$.

 \item As $B(\K)$ stabilizes $\infty$ and $p_0\in S_B(\Q)$, we have that  $p_{0}*q=q$ where $q = \infty$.
 \end{itemize}
\end{proof}

\begin{Lemma}\label{pi_q0(q)}
For every $q\in TP_\infty$, we have $q_{0}*q=q_{0}*\infty$.
\end{Lemma}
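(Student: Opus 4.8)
\emph{Proof strategy.} Write $q=\tp\!\left(\begin{bmatrix}a\\1\end{bmatrix}/\Q\right)\in TP_\infty$, so $v(a)<\z$. Choose $g=\begin{bmatrix}u_1&u_2\\u_3&u_4\end{bmatrix}\in SL(2,\V)$ realizing the unique coheir of $q_0$ over $(M,a)$; since $q_0$ concentrates on $K^0$, $g$ is infinitesimally close to the identity, so $v(u_1)=v(u_4)=0$ and $v(u_2),v(u_3)>\z$. By Definition 4.4, $q_0*q=\tp(g\cdot b/M)=\tp\!\big(\tfrac{u_1a+u_2}{u_3a+u_4}\,/M\big)$; and since $\infty$ is a tuple from $M$ and $\tp(g/M)=q_0$, uniqueness of coheirs gives $q_0*\infty=\tp(g\cdot\infty/M)=\tp(u_1/u_3\,/M)$ (using the same $g$). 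So it suffices to show $\tp(g\cdot b/M)=\tp(g\cdot\infty/M)$. Both $g\cdot b$ and $g\cdot\infty$ will turn out to be honest elements of $\K$ (not the point $\infty$) with valuation $<\z$, so by Lemma 2.1(c) each realizes a type $p_{\infty,C}$ over $M$, and the whole statement reduces to: \emph{$g\cdot b$ and $g\cdot\infty$ lie in the same coset of $(\K^*)^{0}$.}

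\emph{The computation.} First, $u_3\neq0$: the upper triangular matrices in $K$ form the subgroup $B(\Z)$, which has infinite index in $SL(2,\Z)$ (the quotient is $\pp^1(\Z)$), hence is non-generic in the $fsg$ group $K$, so its defining formula is not in $q_0$. Next I control $v(u_3)$ relative to $v(a)$ using that $\tp(g/Ma)$ is finitely satisfiable in $M$: if, for some $n\in\z$, the formula ``$v(u_3\cdot a)\ge n$'' belonged to $\tp(g/Ma)$, then together with ``$u_3\neq0$'' it would be witnessed by some $c\in\Q^{*}$ with $v(c)+v(a)\ge n$; but $v(c)\in\z$ while $v(a)<\z$ forces $v(c)+v(a)<\z$, a contradiction. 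Hence $v(u_3a)<\z$, i.e. $-v(u_3a)>\z$. Since also $v(u_2/(u_1a))=v(u_2)-v(a)>\z$ (both $v(u_2)>\z$ and $-v(a)>\z$), the identity
\[
\frac{g\cdot b}{g\cdot\infty}=\frac{u_3(u_1a+u_2)}{u_1(u_3a+u_4)}=\frac{1+u_2/(u_1a)}{1+u_4/(u_3a)}
\]
exhibits this ratio as a quotient of two elements of the form $1+\varepsilon$ with $v(\varepsilon)>\z$; by Hensel's lemma (cf. Section 2.1) each such element lies in every $(\K^{*})^{n}$, hence in $(\K^{*})^{0}$, so the ratio is in $(\K^{*})^{0}$. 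Finally, $v(u_1a+u_2)=v(a)$ and $v(u_3a+u_4)=v(u_3a)\neq\infty$ (using $v(u_3a)<\z<0$), so $u_3a+u_4\neq0$, $g\cdot b\neq\infty$, and $v(g\cdot b)=v(a)-v(u_3a)=-v(u_3)<\z$; likewise $g\cdot\infty=u_1/u_3\neq\infty$ with $v(g\cdot\infty)=-v(u_3)<\z$. Thus by Lemma 2.1(c) both realize $p_{\infty,C}$ for the common coset $C$, so $q_0*q=q_0*\infty$.

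\emph{Main obstacle.} The only delicate point is the control of $v(u_3)$ against the fixed infinitely small quantity $1/a$: a priori $v(u_3)$ is merely infinitesimal (above $\z$) and could exceed $-v(a)$, in which case $u_3a+u_4$ would have leading term $u_4$ rather than $u_3a$ and the computation would collapse to the wrong answer. The finite-satisfiability argument above is precisely what rules this out; it is the analogue, on the side of $q_0$, of the ``$v(c)<\dcl(\z,v(b))$'' device used in Lemma 4.6. Everything else is routine valuation arithmetic together with Lemma 2.1(c) and the definition of $*$ on $S_{\pp^{1}}(M)$.
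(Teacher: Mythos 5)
Your proof is correct, but it takes a genuinely different route from the paper's. The paper's argument is structural: it factors a realization $\begin{bmatrix} c \\ 1 \end{bmatrix}$ of $q$ as $k\cdot\begin{bmatrix} 1 \\ 0 \end{bmatrix}$ with $k=\begin{bmatrix} 1 & 0\\ c^{-1} & 1 \end{bmatrix}$, notes that $v(c)<\z$ puts $k$ in $K^{0}$, so $q=p*\infty$ for some $p\in S_{K}(M)$ concentrating on $K^{0}$, and then concludes by associativity (Remark 4.4(i)) together with the absorption $q_{0}*p=q_{0}$ coming from genericity of $q_{0}$; the issue you flag as the ``main obstacle'' never arises there, since the only matrix whose entries need controlling is the explicit unipotent $k$. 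You instead compute $q_{0}*q$ and $q_{0}*\infty$ directly in the affine chart and match them against the classification of types at infinity in Lemma 2.1(c); this costs more valuation arithmetic and needs two extra inputs which you correctly supply --- non-genericity of the formula $x_{21}=0$ in the $fsg$ group $K$ to get $u_{3}\neq 0$, and finite satisfiability of $\tp(g/M,a)$ in $M$ to force $v(u_{3}a)<\z$ --- but in return it identifies the common value $q_{0}*q=q_{0}*\infty$ as an explicit $p_{\infty,C}$, rather than merely proving the two types equal. Both arguments are complete; the paper's is the more economical because it exploits the semigroup structure and the fact that the generic type $q_{0}$ absorbs $K^{0}$, while yours is self-contained modulo Lemma 2.1 and Definition 4.3 and is closer in spirit to the entry-by-entry computations of Lemmas 3.5 and 4.6.
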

\begin{proof}
Suppose that $q\in TP_\infty$ and $q\neq \infty$. Let $\begin{bmatrix} {c} \\  {1}   \end{bmatrix}$ be a realization of $q$. Then $\begin{bmatrix} {c} \\  {1}   \end{bmatrix}=\begin{bmatrix} {1} &0\\ c^{-1} &{1}   \end{bmatrix}\cdot\begin{bmatrix} {1} \\  {0}   \end{bmatrix}$. Since $v(c)<\mathbb Z$, $st\bigg(\begin{bmatrix} {1} &0\\ c^{-1} &{1}   \end{bmatrix}\bigg)=\begin{bmatrix} {1} &0\\ 0 &{1}   \end{bmatrix}$. So $\begin{bmatrix} {1} &0\\ c^{-1} &{1}   \end{bmatrix}\in K^0$. Let $p=\tp(\begin{bmatrix} {1} &0\\ c^{-1} &{1}   \end{bmatrix}/\Q)$. Then $q=p*(\begin{bmatrix} {1} \\  {0}   \end{bmatrix})$. So
\[
q_{0}*q =q_{0}*(p*(\begin{bmatrix} {1} \\  {0}   \end{bmatrix}))= (q_0*p)*(\begin{bmatrix} {1} \\  {0}   \end{bmatrix}).
\]
Since $q_0$ is generic in $S_{K}(M)$ and $p$ is realized by some element from $K^0$, we have $q_0*p=q_0$.
\end{proof}
By Lemma \ref{pi_{p0}(q)} and Lemma \ref{pi_q0(q)}, we have

\begin{Theorem}
$(q_0*p_{0})*(S_{\pp^{1}}(\Q))=\left\{q_{0}*\left(\begin{bmatrix} {1} \\  {0}   \end{bmatrix} \right)\right\}$.
\end{Theorem}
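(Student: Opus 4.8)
The plan is to chain together the two preceding lemmas. Recall that $(q_0*p_0)*(S_{\pp^1}(\Q))$ is, by associativity of $*$ on $S_{\pp^1}(\Q)$ (Remark 4.4(i)), equal to $q_0*\bigl(p_0*(S_{\pp^1}(\Q))\bigr)$. So first I would apply Lemma \ref{pi_{p0}(q)}: for every $q\in S_{\pp^1}(\Q)$, either $q=\infty$, in which case $p_0*q=\infty$, or $q\neq\infty$, in which case $p_0*q\in TP_\infty$. Hence $p_0*(S_{\pp^1}(\Q))\subseteq TP_\infty\cup\{\infty\}$.

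Next I would apply $q_0$ on the left. By Lemma \ref{pi_q0(q)}, for every $q\in TP_\infty$ we have $q_0*q=q_0*\infty$. Combined with the trivial fact that $q_0*\infty=q_0*\infty$, this gives $q_0*\bigl(TP_\infty\cup\{\infty\}\bigr)=\{q_0*\infty\}$, a singleton. Therefore
\[
(q_0*p_0)*(S_{\pp^1}(\Q))=q_0*\bigl(p_0*(S_{\pp^1}(\Q))\bigr)\subseteq q_0*\bigl(TP_\infty\cup\{\infty\}\bigr)=\{q_0*\infty\}.
\]
Finally, for the reverse inclusion (to get equality rather than containment), note $\infty\in S_{\pp^1}(\Q)$ and $p_0*\infty=\infty$, so $(q_0*p_0)*\infty=q_0*(p_0*\infty)=q_0*\infty$ lies in the left-hand side; and $q_0*\infty=q_0*\bigl(\begin{bmatrix}1\\0\end{bmatrix}\bigr)$ under the identification of $\infty$ with the $\sim$-class of $\begin{bmatrix}1\\0\end{bmatrix}$ from Section 4. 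This proves the two sets are equal.

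There is essentially no obstacle here: the statement is a bookkeeping corollary of Lemmas 4.5 and 4.6 together with the associativity in Remark 4.4(i). The only mild point to be careful about is that the map $q\mapsto p_0*q$ has image landing in $TP_\infty\cup\{\infty\}$ and not just $TP_\infty$ (the point $\infty$ itself is fixed by $p_0$), so one must check that $q_0$ collapses $\{\infty\}$ to the same value it collapses $TP_\infty$ to — but that is immediate since $q_0*\infty$ is precisely the common value in Lemma \ref{pi_q0(q)}. One should also make explicit, as the statement does, the identification of $\infty$ with $\begin{bmatrix}1\\0\end{bmatrix}$ so the final singleton is written in the form displayed.
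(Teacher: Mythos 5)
Your proposal is correct and is exactly the argument the paper intends: the paper simply states ``By Lemma \ref{pi_{p0}(q)} and Lemma \ref{pi_q0(q)}, we have'' the theorem, and your write-up supplies precisely the chaining of those two lemmas via the associativity in Remark 4.4(i), including the minor point about $\infty$ being fixed by $p_0$ and then sent to the common value $q_0*\infty$.
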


\begin{Cor} The set of nonalgebraic types in $S_{\pp^{1}}(M)$ is a minimal proximal $SL(2,\Q)$-flow.
\end{Cor}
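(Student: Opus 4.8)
The plan is to extract both minimality and proximality from Theorem 4.7, which says that the map $e := q_0 * p_0 \in E(S_{\pp^1}(M))$ collapses all of $S_{\pp^1}(M)$ onto the single point $z_0 := q_0 * \begin{bmatrix} 1 \\ 0 \end{bmatrix}$. First I would observe that $q_0 * p_0$ lies in the minimal flow ${\cal M} = cl({\cal I} * {\cal J})$ of $(G(M), S_G(M))$ (Theorem 3.4), and in fact is an idempotent there; hence its action on $S_{\pp^1}(M)$ is an element of a minimal left ideal of the enveloping semigroup of the flow $(G(M), S_{\pp^1}(M))$. The key soft fact I will use is the standard one from abstract topological dynamics: if $e$ is an element of a minimal left ideal and $X$ is a flow, then $e(X)$ is contained in a single minimal subflow, and more precisely every minimal subflow of $X$ meets $e(X)$; combined with Theorem 4.7, $e(X) = \{z_0\}$ forces the \emph{unique} minimal subflow to be $\overline{G(M)\cdot z_0}$.

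Next I would identify $\overline{G(M)\cdot z_0}$ with the set of nonalgebraic types. Since $z_0 = q_0 * \begin{bmatrix} 1 \\ 0 \end{bmatrix}$ and $q_0$ concentrates on $K^0$, the point $z_0$ is a nonalgebraic type concentrating on the infinitesimal neighbourhood of $\begin{bmatrix} 1 \\ 0 \end{bmatrix}$; by Remark 4.4(iii) its orbit closure is $\{p * z_0 : p \in S_G(M)\}$. One checks that $G(\Q)$ acts transitively on $\pp^1(\Q)$ and that, translating $z_0$ around, every nonalgebraic $1$-type over $M$ (equivalently every nonalgebraic type in $S_{\pp^1}(M)$, using the identification in Remark 4.3) appears in the orbit closure, while the algebraic (realized) types form a dense-but-not-closed invariant set that is \emph{not} contained in any minimal subflow — the closure of any single realized point's orbit, $\pp^1(\Q)$ itself, contains the nonalgebraic types at infinity. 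So the set $N$ of nonalgebraic types is closed, $G(\Q)$-invariant, and equals the unique minimal subflow.

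For proximality: by the characterization recalled in the preliminaries, the flow $(G(\Q), N)$ is proximal iff for all $x, y \in N$ there is $\varepsilon$ in the enveloping semigroup with $\varepsilon(x) = \varepsilon(y)$. Given any $x, y \in S_{\pp^1}(M)$, Theorem 4.7 gives $e(x) = z_0 = e(y)$ with $e = q_0 * p_0$, and $e$ is (the image in the enveloping semigroup of) an element of $S_G(M)$, hence a limit of elements of $G(\Q)$; so $x$ and $y$ are proximal in $(G(\Q), S_{\pp^1}(M))$, a fortiori in the subflow $(G(\Q), N)$. This shows $(G(\Q), N)$ is proximal, and in particular, being proximal, it has a \emph{unique} minimal subflow — consistent with the first part. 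The main obstacle I anticipate is purely bookkeeping: verifying cleanly that $\overline{G(M)\cdot z_0}$ is exactly the nonalgebraic types and that no realized type lies in a minimal subflow, i.e. translating Theorem 4.7 and Remark 4.4(iii) into the statement that $N$ is closed and minimal; the dynamical input (an idempotent in a minimal ideal collapses onto one minimal flow, and a flow collapsed to a point by such an idempotent is proximal with a unique minimal subflow) is entirely standard and can be cited from \cite{Auslander} or \cite{Glasner}.
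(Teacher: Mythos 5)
Your overall architecture agrees with the paper's: both arguments use Theorem 4.7 to see that $q_{0}*p_{0}$ sends every type in $S_{\pp^{1}}(M)$ to the single point $z_{0}=q_{0}*\infty$, conclude that every minimal subflow must contain $z_{0}$ (the paper gets this directly from Remark 4.4(iii) --- a minimal subflow is closed under $S_{G}(M)*$ --- rather than routing through idempotents in minimal left ideals of the enveloping semigroup, but the content is the same), and read off proximality of the resulting unique minimal subflow from the fact that $e(x)=e(y)=z_{0}$ for all $x,y$. Those parts are correct.

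The genuine gap is the step you set aside as ``purely bookkeeping'': proving that $\overline{G(M)\cdot z_{0}}$ is \emph{all} of $S_{\pp^{1},na}(M)$. This is the one substantive computation in the corollary, and ``translating $z_{0}$ around'' does not deliver it. The orbit $G(\Q)\cdot z_{0}$ is countable, while the set of nonalgebraic types infinitesimally close to a fixed $a\in\pp^{1}(\Q)$ is a copy of $\K^{*}/(\K^{*})^{0}$ (parametrized by the cosets $C$ of Lemma 2.1), so everything hinges on computing a closure. Worse, the naive attempt fails at the level of a single fiber: the stabilizer $B(\Q)$ of $\infty$ acts on the infinitesimal neighbourhood of $\infty$ by $x\mapsto a^{2}x+ac$, which moves the coset parameter $C$ only by the image of the squares $(\Q^{*})^{2}$, whose closure is a \emph{proper} closed subgroup of $\K^{*}/(\K^{*})^{0}$; so the closure of the orbit of $z_{0}$ under the stabilizer of $\infty$ is a proper subset of the set of types at infinity, and one must extract the missing types as limits from other directions. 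The paper does this with a specific input you never invoke: the unique minimal $SL(2,\Q)$-subflow is closed and invariant under the diagonal torus, hence contains a minimal subflow of the multiplicative-group action, which by Proposition 2.4 is the \emph{entire} set of nonalgebraic types infinitesimally close to $0$ or the entire set of types at infinity; transitivity of $SL(2,\Q)$ on $\pp^{1}(\Q)$ then carries this full infinitesimal neighbourhood to every standard point, and by Lemma 2.1 every nonalgebraic type lies in some such neighbourhood. Without this (or an equivalent explicit computation of $S_{G}(M)*z_{0}$), your argument establishes only that $S_{\pp^{1}}(M)$ has a unique minimal subflow, that it is proximal, and that it is contained in $S_{\pp^{1},na}(M)$ --- not that it equals $S_{\pp^{1},na}(M)$.
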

\begin{proof} Firstly the set $S_{\pp^{1},na}(M)$ of nonalgebraic types in $S_{\pp^{1}}(M)$ is closed and $SL(2,\Q)$-invariant.  Secondly (see Remark 4.4(iii)), any minimal $SL(2,\Q)$-subflow of $S_{\pp^{1},na}(M)$, is closed under $S_{G}(M)*$, so by Theorem 4.7 contains 
$q_{0}*\left(\begin{bmatrix} {1} \\  {0}   \end{bmatrix} \right)$. We have shown so far that  $S_{\pp^{1}}(M)$  has a unique minimal subflow which is the closure of the orbit of $q_{0}*\infty$ and that this minimal subflow is proximal.  It remains to see that $S_{\pp^{1},na}(M)$ is a minimal subflow. It is clearly closed and $SL(2,\Q)$-invariant.  Note that the unique minimal subflow of $S_{\pp^{1}}(M)$ contains a minimal subflow with respect to the multiplicative group, which by Proposition 2.4 consists either of the complete nonalgebraic $1$-types over $M$ which are ``infinitesimally close" to $0$, or the complete nonalgebraic $1$-types over $M$  which are ``at infinity", namely what we called above the infinitesimal neighbourhood of $\infty$ in $S_{\pp^{1},na}(M)$.   As $SL(2,\Q)$ acts transitively on $\pp^{1}(\Q)$, it follows that for every $a\in \pp^{1}(\Q)$, the set of complete nonalgebraic  $1$-types over $M$ infinitesimally close to $a$ is included in the unique minimal subflow of $S_{\pp^{1},na}(M)$. But this accounts for all of $S_{\pp^{1},na}(M)$ which is therefore minimal is claimed. 
\end{proof}

\begin{Question}
(i) Is  $S_{\pp^{1},na}(M)$ the universal minimal proximal definable $SL(2,\Q)$ flow?
\newline
(ii) Is  $S_{\pp^{1},na}(M)$  a strongly proximal $SL(2,\Q)$-flow?
\end{Question}
\noindent
{\em Explanation.}  (i) The universal minimal proximal definable flow exists and will be a minimal proximal $SL(2,\Q)$-flow which is a ``homomorphic image" of the universal minimal definable flow, and universal such. 
\newline
(ii)  Strong proximality of $(X,G)$ means that the action of $G$ on the space of Borel probability measures on $X$ is proximal.  In our context, $(S_{\pp^{1},na}(M), SL(2,\Q))$, the action  will be definable. See Proposition 6.3 of \cite{Krupinski-PillayII}.  We guess that the answer to (ii) is positive.


\begin{thebibliography}{99}


\bibitem{Auslander} J.Auslander, {\em Minimal flows and their extensions}, North Holland, Amsterdam, 1988.

\bibitem{Belair} Luc Belair, Panorama of $p$-adic model theory, Ann. Sci. Math. Quebec, 36(1) · January 2012

\bibitem{Bruhat-Tits} F. Bruhat and J. Tits, Groupes r\'{e}ductive sur un corps local, Publ. Math. IHES (1972), vol. 41, 5-251.


\bibitem{C-P-S} A. Chernikov, A. Pillay, P. Simon, External definability and groups in NIP theories, J. London Math. Soc. vol. 90 (2014) 213–240

\bibitem{C-S} A. Chernikov and P. Simon, Definably amenable $NIP$ groups, J. American Math. Society, 31 (2018), 609-641. 

\bibitem{Del89} Fran\c coise Delon. D\'efinissabilit\'e avec param\`etres ext\'eriours dans $\Q$ et $\R$. Proceedings of the American Mathematical Society, 106(1): 193-198, 1989.  

\bibitem{Druart-thesis} B. Druart, Ph.D. thesis, University Lyon 1, 2015. 

\bibitem{Druart-paper} B. Druart, Definable subgroups in $SL_{2}$ over a $p$-adically closed field, arXiv: 1501.06834v1, 2015. 


\bibitem{Ellis}  R. Ellis, {\em Lectures on topological dynamics}, Benjamin, 1969. 

\bibitem{GPPI} J. Gismatullin, D. Penazzi and  A. Pillay, Some model theory of SL($2,\R$), Fundamenta Mathematicae 229(2), (2015).

\bibitem{GPPII} J. Gismatullin, D. Penazzi and  A. Pillay, On compactifications and the topological dynamics of definable groups, Annals of  Pure and Applied Logic, 165(2014), 552-562.

\bibitem{Glasner} E. Glasner, {\em Proximal Flows}, Springer Lecture Notes 517,  Springer, 1976.

\bibitem{NIPI} E. Hrushovski, Y. Peterzil, and A. Pillay, Groups, measures, and the NIP, Journal AMS 21 (2008), 563-596.

\bibitem{NIPII} E. Hrushovski and A. Pillay, On NIP and invariant measures, J. European Math. Soc. 13 (2011), 1005 - 1061.






\bibitem{G. Jagiella} G. Jagiella, Definable topological dynamics and real Lie groups, Math. Logic Quarterly, 61 (2015), 45 - 55.

\bibitem{Krupinski-PillayII} K. Krupinski and A. Pillay, Generalised Bohr compactification and model-theoretic connected components, Math Proceedings of Cambridge Philosophical Society, Volume 163, Issue 2 September 2017 , pp. 219 - 249.

\bibitem{Krupinski-Pillay} K. Krupinski and A. Pillay, Amenability, definable groups, and automorphism groups, Advances in Math. vol. 345 (2019), 1253-1299. 

\bibitem{Macintyre} A. Macintyre, On definable subsets of $p$-adic fields, Journal of Symbolic Logic, 41 (1976), 605-610.

\bibitem{Newelski} L. Newelski, Topological dynamics of definable group actions, Journal of Symbolic Logic, 74(2009), 50-72.

\bibitem{O-P} A. Onshuus and A. Pillay,  Definable Groups and Compact $p$-adic Lie Groups, Journal of the London Mathematical Society, 78(1), (2008) , 233-247.
\bibitem{Pillay-intro-stability} A. Pillay, {\em An introduction to stability theory}, Oxford University Press, 1983. 
\bibitem{Pillay-top}  A. Pillay, Topological dynamics and definable groups, J. Symbolic Logic, 78:657-666,2013.
\bibitem{P-Y} A. Pillay and N. Yao, On minimal flows, definably amenable groups, and o-minimality, Adv. in Mathematics, 290(2016), 483-502.

\bibitem{Poizat-book} B. Poizat, {\em A course in model theory}, Spinger-Verlag, NY, 2000. 

\bibitem{Simon}  P. Simon, VC-sets and generic compact domination, Israel J. Math., 218 (2017), 27-41.

\bibitem{Y-L} N. Yao and D. Long, Topological dynamics for groups definable in real closed fields, Annals of Pure and Applied Logic, 166(2015), 261-273.













\end{thebibliography}
\end{document}